\def\bbe{\mathbb E}
\def\bbone{{\mathbbm 1}}
\theoremstyle{plain}
\newtheorem{thm}{Theorem}[section]
\newtheorem{cor}{Corollary}[section]
\newtheorem{prop}{Proposition}[section]
\newtheorem{lem}{Lemma}[section]
\newtheorem{defi}{Definition}[section]
\newtheorem{rem}{Remark}[section]
\numberwithin{equation}{section}
\begin{document}
\title{On Stein's Method for Multivariate Self-Decomposable Laws With Finite First Moment}

\author{Benjamin Arras\thanks{Universit\'e de Lille, Laboratoire Paul Painlev\'e, Batiment M3, Cit\'e Scientifique, 59655 Villeneuve-d'Ascq; arrasbenjamin@gmail.com.}\; and Christian Houdr\'e\thanks{Georgia Institute of Technology, 
School of Mathematics, Atlanta, GA 30332-0160; houdre@math.gatech.edu. Research supported in part by the grants \# 246283 and \# 524678 from the Simons Foundation.  
\newline\indent  Keywords:  Infinite Divisibility, Self-decomposability, Stein's Method, Stein's kernel, Weak Limit Theorems, 
Rates of Convergence, Smooth Wassertein Distance.
\newline\indent MSC 2010: 60E07, 60E10, 60F05.}}

\maketitle

\vspace{\fill}
\begin{abstract}
We develop a multidimensional Stein methodology for non-degenerate self-decomposable random vectors in $\mathbb{R}^d$ having finite first moment. Building on previous univariate findings, we solve an integro-partial differential Stein equation by a mixture of semigroup and Fourier analytic methods. Then, under a second moment assumption, we introduce a notion of Stein kernel and an associated Stein discrepancy specifically designed for infinitely divisible distributions. Combining these new tools, we obtain quantitative bounds on smooth-Wasserstein distances between a probability measure in $\mathbb{R}^d$ and a non-degenerate self-decomposable target law with finite second moment. Finally, under an appropriate spectral gap assumption, we investigate, via variational methods, the existence of Stein kernels. In particular, this leads to quantitative versions of classical results on characterizations of probability distributions by variational functionals.
\end{abstract}
\vspace{\fill}

\section{Introduction}

Stein's method is a powerful device to quantify proximity in law between random variables. It has proven to be particularly useful to compute explicit rates of convergence for several limiting theorems appearing in probability theory (from the standard central limit theorem to more complex probabilistic models satisfying some specific asymptotic behavior). Moreover, it has been successfully implemented for a large collection of one dimensional target limiting laws (see \cite{Stein1,Stein2,CGS,R} for standard references on the subject and \cite{LRS17} for a more recent survey). All the previously mentioned works essentially focus on the unidimensional setting and related multidimensional results are relatively sparse in the literature. Indeed, the multidimensional Stein's method has mainly been developed for the multivariate normal laws (see e.g. \cite{Bar90,Go1991,GR96,RR96,Rai04,ChM08,RR09,Mec09,NPR10,R13,NPS14}) and for invariant measures of multidimensional diffusions (\cite{MacGor16,GDVM16}). In particular, the work \cite{GDVM16} proposes a general Stein's method framework for target probability measures $\mu$ on $\mathbb{R}^d$, $d\geq1$, which satisfy the following set of assumptions: $\mu$ has finite mean, is absolutely continuous with respect to the $d$-dimensional Lebesgue measure and its density is continuously differentiable with support the whole of $\mathbb{R}^d$. 

Below, we introduce and develop a multidimensional Stein's methodology for a specific class of probability measures on $\mathbb{R}^d$, namely non-degenerate self-decomposable laws with finite first moment (see \eqref{eq:2.1} in Section \ref{sec:not} for a definition). This class of probability measures, introduced by Paul L\'evy in \cite{L54}, is rather natural in the context of limit theorems for sum of independent summands and has been thoroughly studied in several classical books (see e.g. \cite{K38,L54,GK68,Loe78,Pet95,S}). Nevertheless, while being very classical in the context of limit theorems, no systematic Stein's method has been implemented for multivariate non-degenerate self-decomposable distributions. (The whole class of non-degenerate self-decomposable laws with finite first moment is different, but intersects with the class of target probability measures considered in \cite{GDVM16} and covered by their methodology. Indeed, non-degenerate self-decomposable laws with finite first moment admit a Lebesgue density, which might not be differentiable on $\mathbb{R}^d$, and whose support might be a half-space of $\mathbb{R}^d$.) Finally, many classical probability measures on $\mathbb{R}^d$ are self-decomposable (see \cite{S,SV03} and Section \ref{sec:SteinEqu} below for some examples).

From our previous univariate work \cite{AH18}, the multidimensional Stein's method we implement is a generalization of the semigroup method "\`a la Barbour" (\cite{Bar90}). Thanks to the particular structure of self-decomposable characteristic functions, this semigroup approach relies heavily on Fourier analysitic tools. Moreover, the generator of the aforementioned semigroup is an integro-differential operator reflecting the infinite divisibility of the target law and it can be seen as a direct consequence of a characterization identity originating in \cite{HPAS} and further developed and analyzed in \cite{AH18}. The resulting Stein equation is a non-local partial differential equation and contrasts with the usual second order partial differential equations associated with the multivariate Gaussian distribution or with the invariant measures of It\^o diffusions.

Then, we apply our Stein methodology to quantify proximity, in smooth Wasserstein distances of orders $1$ and $2$, between an appropriate probability measure on $\mathbb{R}^d$ and a non-degenerate self-decomposable laws with finite second moment. Key quantities used in our analysis are relevant versions of Stein kernels and of Stein discrepancies in this infinitely divisible setting (see Definition \ref{defSteinkernel2}). Stein kernel and Stein discrepancy are concepts which have mostly been well developed in the Gaussian setting and have recently gained a certain momentum in connection with random matrices (\cite{Cha09}), Malliavin calculus (\cite{NP1,NP3}), functional inequalities (\cite{LNP15,CFP18}), optimal transport (\cite{F18}) and rates of convergence for multidimensional central limit theorems (\cite{NPS14}). In particular, the work \cite{CFP18} investigates the question of existence of a Gaussian Stein kernel for probability measures satisfying a Poincar\'e inequality or a converse weighted Poincar\'e inequality (see e.g. \cite{BGL14} for a definition). Thanks to earlier work on characterizing functionals of infinitely divisible distributions \cite{ChLo87}, we introduce in the last section of the present manuscript the relevant variational setting which ensures the existence of Stein kernel and implies manageable upper bounds on the Stein discrepancy. In particular, Theorem \ref{th:PoinQuan} is a quantitative version of the characterizing results contained in \cite{ChLo87}.

Let us further describe the content of these notes. In  the next section, we introduce the notations used throughout this work. In Section \ref{sec:SteinEqu}, we develop the multidimensional Stein methodology for non-degenerate self-decomposable random vector with finite first moment, extending our univariate approach (\cite{AH18}). In Section \ref{sec:SK}, we introduce the infinitely divisible version of Stein kernel (and of the Stein discrepancy) and study the existence of the latter under an appropriate version of Poincar\'e inequality. We end this section by providing quantitative upper bounds on the smooth Wasserstein distance of order two in terms of Poincar\'e constants and of the second moment of the L\'evy measure of the target self-decomposable distribution. A technical appendix finishes these notes.

\section{Notations}\label{sec:not}
Throughout, let $\|\cdot\|$ and $\langle \cdot;\cdot \rangle$ be respectively the Euclidean norm and the inner product in $\mathbb{R}^d$, $d\geq 1$. Let also ${\cal S}(\mathbb{R}^d)$ be the Schwartz space of infinitely differentiable rapidly decreasing real-valued functions defined on $\mathbb R^d$, and by $\mathcal{F}$ the Fourier transform operator given, for $f\in {\cal S}(\mathbb{R}^d)$, by 
\begin{align*}
\mathcal{F}(f)(\xi)=\int_{\mathbb{R}^d}f(x)e^{-i \langle \xi; x \rangle}dx, \quad \xi \in \mathbb{R}^d.
\end{align*}
On ${\cal S}(\mathbb{R}^d)$, the Fourier transform is an isomorphism and the following inversion formula holds
\begin{align*}
f(x)=\int_{\mathbb{R}^d}\mathcal{F}(f)(\xi)e^{+i \langle \xi; x \rangle}\frac{d\xi}{(2\pi)^d}, \quad x\in \mathbb{R}^d.
\end{align*}
Let ${\cal C}_b(\mathbb{R}^d)$ be the space of bounded continuous functions on $\mathbb{R}^d$ endowed with the uniform norm $\|f\|_\infty=\sup_{x\in \mathbb{R}^d}|f(x)|$, for $f\in {\cal C}_b(\mathbb{R}^d)$. For any bounded linear operator, $T$, from a Banach space $({\cal X}, \|\cdot\|_{{\cal X}})$ to another Banach space $({\cal Y}, \|\cdot\|_{{\cal Y}})$ the operator norm is, as usual,
\begin{align}
\|T\|_{op}=\underset{f\in {\cal X},\, \|f\|_{{\cal X}}\ne 0}{\sup}\dfrac{\|T(f)\|_{{\cal Y}}}{\|f\|_{{\cal X}}}.
\end{align}
More generally, for any $r$-multilinear form $F$ from $(\mathbb{R}^d)^r$, $r\geq 1$, to $\mathbb{R}$, the operator norm of $F$ is
\begin{align}
\|F\|_{op}:=\sup \left(|F(v_1,...,v_r)|:\, v_j \in \mathbb{R}^d,\, \|v_j\|=1,\, j=1,...,r\right).
\end{align}
Throughout, a L\'evy measure is a positive Borel measure on $\mathbb{R}^d$ such that $\nu(\{0\})=0$ and $\int_{\mathbb{R}^d} (1\wedge \|u\|^2)\nu(du)<+\infty$. An $\mathbb{R}^d$-valued random vector $X$ is infinitely divisible with triplet $(b,A,\nu)$ (written $X\sim ID(b, A,\nu)$), if its characteristic function $\varphi$ writes, for all $\xi\in\mathbb{R}^d$, as
\begin{align}
\varphi(\xi)=\exp\left(i \langle b;\xi \rangle-\frac{1}{2}\langle \xi;A(\xi) \rangle +\int_{\mathbb{R}^d}\left(e^{i \langle \xi; u\rangle}-1-i\langle \xi;u \rangle\bbone_{D}(u)\right)\nu(du)
\right),
\end{align}
with $b\in\mathbb{R}^d$, $A$ a symmetric nonnegative definite $d\times d$ matrix, $\nu$ a L\'evy measure on $\mathbb{R}^d$ and $D$ the closed Euclidean unit ball of $\mathbb{R}^d$. In the sequel, we are mainly interested in a subclass of infinitely divisible distributions, namely the self-decomposable laws (SD). If $\varphi$ is the characteristic function of a self-decomposable distribution, then for all $\gamma\in (0,1)$, there exists, on $\mathbb{R}^d$, a probability measure, say $\mu_\gamma$, such that, for all $\xi\in \mathbb{R}^d$
\begin{align}\label{eq:2.1}
\int_{\mathbb{R}^d}e^{i\langle \xi; u \rangle}\mu_\gamma(du)=\dfrac{\varphi(\xi)}{\varphi(\gamma\xi)}
\end{align}
(Recall that by \cite[Lemma $7.5$]{S} $\varphi(\xi)\ne 0$, for all $\xi\in \mathbb{R}^d$). Moreover, by \cite[Theorem 15.10]{S}, the L\'evy measure of a self-decomposable distribution is such that, for any Borel set $B$ in $\mathbb{R}^d\setminus \{0\}$
\begin{align}\label{rep:sd}
\nu(B)=\int_{S^{d-1}}\lambda(dx)\int_0^{+\infty} \bbone_{B}(rx)k_x(r)\frac{dr}{r},
\end{align}
with $\lambda$ a finite positive measure on the Euclidean unit sphere $S^{d-1}$ and $k_x(r)$ a nonnegative function (Lebesgue) measurable in $x\in S^{d-1}$ and decreasing in $r>0$ (namely, $k_x(s)\leq k_x(r)$, for $0<r\leq s$). Thanks to \cite[Remark $15.12$ (iii)]{S}, since $\nu \ne 0$, let's assume that $\lambda(S^{d-1})=1$, $\int_{(0,+\infty)} (r^2 \wedge 1)k_x(r)dr/r$ is finite and independent of $x$ and that $k_x(r)$ is right-continuous in $r>0$. Finally, since they satisfy the divergence condition (see e.g. \cite[Theorem 27.13]{S}), non-degenerate self-decomposable laws on $\mathbb{R}^d$ are absolutely continuous with respect to the $d$-dimensional Lebesgue measure. We further end this section by introducing some natural distances between probability measures on $\mathbb{R}^d$. For $p\geq 1$, the Wasserstein-$p$ distance between two probability measures $\mu_X$ and $\mu_Y$ with finite $p$-th moment is
\begin{align}
W_p(\mu_X,\mu_Y)=\underset{\Pi\in \Gamma(\mu_X, \mu_Y)}{\inf}\left(\int_{\mathbb{R}^d\times \mathbb{R}^d}\|x-y\|^pd\Pi(x,y)\right)^{\frac{1}{p}},
\end{align} 
where $\Gamma(\mu_X, \mu_Y)$ is the collection of probability measures on $\mathbb{R}^d\times \mathbb{R}^d$ with respective first and last $d$-dimensional marginals given by $\mu_X$ and $\mu_Y$. By H\"older inequality, for $1\leq p\leq q$,
\begin{align}
W_p(\mu_X,\mu_Y)\leq W_q(\mu_X,\mu_Y),
\end{align}
while, by duality,
\begin{align}
W_1(\mu_X,\mu_Y)=\underset{\|h\|_{Lip}\leq 1}{\sup}\left|\bbe h(X)-\bbe h(Y)\right|,
\end{align}
with $X\sim\mu_X$, $Y\sim\mu_Y$ and where $Lip$ is the space of Lipschitz functions on $\mathbb{R}^d$ endowed with the seminorm
\begin{align}
\|h\|_{Lip}=\underset{x\ne y\in \mathbb{R}^d}{\sup}\dfrac{\left| h(x)-h(y)\right|}{\|x-y \|}.
\end{align}
Let $\mathbb{N}^d$ be the space of multi-indices of dimension $d$.  For any $\alpha\in \mathbb{N}^d$, $|\alpha|=\sum_{i=1}^d |\alpha_i|$ and $D^{\alpha}$ the partial derivatives operators defined on smooth enough functions $f$, by $D^{\alpha}(f)(x_1,...,x_d)=\partial^{\alpha_1}_{x_1}...\partial^{\alpha_d}_{x_d}(f)(x_1,...,x_d)$, for all $(x_1,...,x_d)\in \mathbb{R}^d$. Moreover, for any function $r$-times continuously differentiable, $h$, on $\mathbb{R}^d$, viewing its $\ell$th-derivative $\mathbf{D}^{\ell}(h)$ as a $\ell$-multilinear form, for $1\leq \ell\leq r$, we introduce the following quantity
\begin{align}
M_{\ell}(h):=\underset{x\in \mathbb{R}^d}{\sup} \|\mathbf{D}^{\ell}(h)(x)\|_{op}=\underset{x\ne y}{\sup}\dfrac{\|\mathbf{D}^{\ell-1}(h)(x)-\mathbf{D}^{\ell-1}(h)(y)\|_{op}}{\|x-y\|}.
\end{align}
For $r\geq 0$, $\mathcal{H}_r$ is the space of bounded continuous functions defined on $\mathbb{R}^d$ which are continuously differentiable up to (and including) the order $r$ and such that, for any such function $f$ \begin{align}
\max_{0\leq \ell \leq r}M_\ell(f)\leq 1
\end{align}
with $M_0(f):=\sup_{x\in\mathbb{R}^d}|f(x)|$. In particular, for $f\in \mathcal{H}_r$,
\begin{align}
\underset{\alpha\in \mathbb{N}^d,\, 0\leq |\alpha|\leq r}{\max}\|D^\alpha(f)\|_{\infty}\leq 1.
\end{align}
Therefore, the space $\mathcal{H}_r$ is a subspace of the set of bounded functions which are $r$-times continuously differentiable on $\mathbb{R}^d$ such that $\|D^\alpha(f)\|_{\infty}\leq 1$ for all $\alpha\in \mathbb{N}^d$ with $0\leq |\alpha|\leq r$. Then, the smooth Wasserstein distance of order $r$ between two random vectors $X$ and $Y$ having respective laws $\mu_X$ and $\mu_Y$ is defined by
\begin{align}
d_{W_r}(\mu_X,\mu_Y)=\underset{h\in \mathcal{H}_r}{\sup} \left|\bbe h(X)-\bbe h(Y)\right|.
\end{align}
Moreover, the smooth Wasserstein distances of order $r\geq 1$ admit the following representation (see Lemma \ref{lem:repsmooth0} of the Appendix)
\begin{align}
d_{W_r}(\mu_X,\mu_Y)=\underset{h\in \mathcal{H}_r\cap C^{\infty}_c(\mathbb{R}^d)}{\sup} \left|\bbe h(X)-\bbe h(Y)\right|,
\end{align}
where $\mathcal{C}^{\infty}_c(\mathbb{R}^d)$ is the space of infinitely differentiable compactly supported functions on $\mathbb{R}^d$. In particular, for $p\geq 1$ and $r\geq 1$
\begin{align}\label{ineq:wasser}
d_{W_r}(\mu_X,\mu_Y) \leq d_{W_1}(\mu_X,\mu_Y)\leq W_1(\mu_X,\mu_Y)\leq W_p(\mu_X,\mu_Y).
\end{align}
Finally, as usual, for two probability measures, $\mu_1$ and $\mu_2$, on $\mathbb{R}^d$, $\mu_1$ is said to be absolutely continuous with respect to $\mu_2$, denoted by $\mu_1<<\mu_2$, if for any Borel set, $B$,  such that $\mu_2(B)=0$, then $\mu_1(B)=0$.

\section{Stein's Equation for SD Laws by Semigroup Methods}\label{sec:SteinEqu}
Let $X$ be a non-degenerate self-decomposable random vector with values in $\mathbb{R}^d$, without Gaussian component, and law $\mu_X$. By non-degenerate, we mean that the support of the law of $X$ is not contained in some $d-1$ dimensional subspace of $\mathbb{R}^d$. Denote by $X_i$, for $i= 1,...,d$, its coordinates and assume that $\bbe |X_i|<\infty$, for all $i=1,...,d$. Its characteristic function $\varphi$ is given, for all $\xi \in \mathbb{R}^d$, by
\begin{align}\label{eq:lvKh}
\varphi(\xi)&=\exp\left(i \langle \xi; \bbe X\rangle+\int_{\mathbb{R}^d}\left(e^{i \langle\xi ;u\rangle}-1-i\langle u ;\xi\rangle\right)\nu(du)\right)\nonumber\\
&= \exp\left(i \langle \xi; \bbe X\rangle+\int_{S^{d-1}\times (0,+\infty)}\left(e^{i \langle\xi ; r x\rangle}-1-i\langle rx ;\xi\rangle\right)\dfrac{k_x(r)}{r}dr\lambda(dx)\right),
\end{align}
where $\nu$ is the L\'evy measure of $X$, while $k_x$ and $\lambda$ are given in \eqref{rep:sd}. Further, assume that, for any $0<a<b<+\infty$ the functions $k_x(\cdot)$ are such that
\begin{align}\label{cond:kx}
\sup_{x\in S^{d-1}}\sup_{r\in (a,b)}k_x(r)<+\infty.
\end{align}
Since the function $k_x(\cdot)$ is a non-increasing function in $r>0$, the previous condition boils down to, 
\begin{align*}
\sup_{x\in S^{d-1}}k_x(a^+)<+\infty,\quad a>0.
\end{align*}
where $k_x(a^+)=\lim_{r\rightarrow a^+}k_x(r)$, for all $x\in S^{d-1}$. In \eqref{cond:kx}, the supremum over $x$ in $S^{d-1}$ has to be understood as the $\lambda$-essential supremum of the function $k_x(r)$ in the $x$ variable. In the univariate case, $d=1$, the polar decomposition of the L\'evy measure $\nu$ boils down to $\nu(du)=k(u)du/|u|$ where $k$ is non-negative, non-decreasing on $(-\infty,0)$ and non-increasing on $(0,+\infty)$. Thus, the condition \eqref{cond:kx} is automatically satisfied for $d=1$. For $d\geq 2$, the polar decomposition of the L\'evy measure associated with a stable distribution of index $\alpha\in (1,2)$ is given by 
\begin{align*}
\nu(du)=\bbone_{(0,+\infty)}(r)\bbone_{S^{d-1}}(x)\frac{dr}{r^{\alpha+1}}\lambda(dx),
\end{align*}
for some finite positive measure $\lambda$ on the $d$-dimensional unit sphere (see \cite[Theorem $14.3$]{S}). Then, the function $k_x(r)=1/r^\alpha$, for all $r>0$, and condition \eqref{cond:kx} is automatically satisfied (see below for more examples). Next, define a family of operators $(P^\nu_t)_{t\geq 0}$, for all $f\in{\cal S}(\mathbb{R}^d)$, all $x\in\mathbb{R}^d$ and all $t\geq 0$, via
\begin{align}\label{def:sg}
P^\nu_t(f)(x)=\dfrac{1}{(2\pi)^d}   \int_{\mathbb{R}^d} \mathcal{F}(f)(\xi) e^{i e^{-t}\langle x;\xi \rangle}\dfrac{\varphi(\xi)}{\varphi(e^{-t}\xi)} d\xi.
\end{align}
Denoting by $(\mu_t)_{t\geq 0}$ the family of probability measures given by \eqref{eq:2.1} with $\gamma=e^{-t}$ and using Fourier inversion in ${\cal S}(\mathbb{R}^d)$, then
\begin{align}\label{eq:2.2}
P^\nu_t(f)(x)= \int_{\mathbb{R}^d} f(u+e^{-t}x)\mu_t(du).
\end{align}
For all $t\geq 0$, the probability measure $\mu_t$ is infinitely divisible with finite first moment and its characteristic function $\varphi_t$ admits, for all $\xi \in \mathbb{R}^d$, the following representation
\begin{align}\label{rep:charac2}
\varphi_t(\xi)=\exp\left(i\langle\xi; \bbe X \rangle(1-e^{-t})+\int_{S^{d-1}\times (0,+\infty)}\left(e^{i \langle\xi ; r x\rangle}-1-i\langle rx ;\xi\rangle\right) \dfrac{k_x(r)-k_x(e^t r)}{r}dr\lambda(dx)\right).
\end{align}
The next lemma asserts that the family of operators $(P^\nu_t)_{t\geq 0}$ is a $C_0$-semigroup on the space $L^1(\mu_X)$. Its proof is very similar to the one dimensional case (see \cite[Proposition 5.1]{AH18}) thanks to the polar decomposition \eqref{rep:sd}.

\begin{lem}\label{lem:SG}
Let $X$ be a non-degenerate self-decomposable random vector without Gaussian component, with law $\mu_X$, L\'evy measure $\nu$ and such that $\bbe \|X\|<\infty$, with moreover the functions $k_x$ given by \eqref{rep:sd} satisfying \eqref{cond:kx}. Let $\varphi$ be its characteristic function and let $(P^\nu_t)_{t\geq 0}$ be the family of operators defined by \eqref{def:sg}. Then, $(P^\nu_t)_{t\geq 0}$ is a $C_0$-semigroup on the space $L^1(\mu_X)$ and its generator $\mathcal{A}$ is defined, for all $f\in {\cal S}(\mathbb{R}^d)$ and for all $x\in \mathbb{R}^d$, by
\begin{align}\label{def:Gene}
\mathcal{A}(f)(x)=\langle \bbe X-x;\nabla (f)(x) \rangle+\int_{\mathbb{R}^d}\langle \nabla(f)(x+u)-\nabla(f)(x) ;u\rangle \nu(du).
\end{align}
\end{lem}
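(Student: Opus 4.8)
The plan is to establish the two claimed facts—(i) that $(P^\nu_t)_{t\geq 0}$ is a $C_0$-semigroup on $L^1(\mu_X)$, and (ii) that its generator acts on $\mathcal S(\mathbb R^d)$ by the formula \eqref{def:Gene}—by reducing everything, through the polar decomposition \eqref{rep:sd} and the factorization \eqref{eq:2.1}, to the univariate arguments of \cite[Proposition 5.1]{AH18}. First I would record the semigroup property: from \eqref{eq:2.2} we have $P^\nu_t(f)(x)=\int f(u+e^{-t}x)\,\mu_t(du)$, and the identity $\varphi_{t+s}(\xi)=\varphi(\xi)/\varphi(e^{-(t+s)}\xi) = \big(\varphi(\xi)/\varphi(e^{-t}\xi)\big)\cdot\big(\varphi(e^{-t}\xi)/\varphi(e^{-t}e^{-s}\xi)\big)$ shows at the level of characteristic functions that $\mu_{t+s}$ is the law of $Z_t + e^{-t} Z'_s$ with $Z_t\sim\mu_t$, $Z'_s\sim\mu_s$ independent; substituting this into \eqref{eq:2.2} gives $P^\nu_{t+s} = P^\nu_t P^\nu_s$, and $P^\nu_0=\mathrm{Id}$ is immediate since $\mu_0=\delta_0$. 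Next I would check that each $P^\nu_t$ is a bounded (indeed contractive) operator on $L^1(\mu_X)$: using \eqref{eq:2.2} together with the self-decomposability identity $\mu_X = \mu_t * (\text{law of }e^{-t}X)$ and Jensen/Fubini, $\int |P^\nu_t f|\,d\mu_X \le \int\!\!\int |f(u+e^{-t}x)|\,\mu_t(du)\,\mu_X(dx) = \int|f|\,d\mu_X$.

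For strong continuity, I would first verify $\|P^\nu_t f - f\|_{L^1(\mu_X)}\to 0$ as $t\downarrow 0$ for $f$ in a dense subclass—say $\mathcal S(\mathbb R^d)$ or $C_b\cap\text{Lip}$—where \eqref{eq:2.2} plus dominated convergence works once one knows $\mu_t \Rightarrow \delta_0$ and that the first moments $\int\|u\|\,\mu_t(du)$ stay bounded (both follow from \eqref{rep:charac2}, since as $t\downarrow 0$ the integrand $k_x(r)-k_x(e^t r)\downarrow 0$ pointwise and, by \eqref{cond:kx} and monotonicity, is dominated by an integrable bound on compacts while the $\int(r^2\wedge 1)k_x(r)dr/r$ finiteness controls the tails); then extend to all of $L^1(\mu_X)$ by the uniform contractivity just established and a standard $3\varepsilon$ density argument. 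This is essentially the one-dimensional computation of \cite{AH18} carried out coordinate-wise after writing $u = rx$, $x\in S^{d-1}$.

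The generator computation is where the real work lies. For $f\in\mathcal S(\mathbb R^d)$ I would differentiate \eqref{def:sg} in $t$ at $t=0$ under the integral sign. Writing $\psi(\xi) := \log\varphi(\xi) = i\langle\xi;\mathbb E X\rangle + \int_{\mathbb R^d}(e^{i\langle\xi;u\rangle}-1-i\langle\xi;u\rangle)\nu(du)$, one has $\varphi(\xi)/\varphi(e^{-t}\xi) = \exp(\psi(\xi)-\psi(e^{-t}\xi))$, so $\partial_t\big|_{t=0}\big(e^{ie^{-t}\langle x;\xi\rangle}\varphi(\xi)/\varphi(e^{-t}\xi)\big) = \big(-i\langle x;\xi\rangle + \langle\xi;\nabla\psi(\xi)\rangle\big)\varphi(\xi)e^{i\langle x;\xi\rangle}$, where $\langle\xi;\nabla\psi(\xi)\rangle = i\langle\xi;\mathbb E X\rangle + \int(i\langle\xi;u\rangle e^{i\langle\xi;u\rangle} - i\langle\xi;u\rangle)\nu(du)$ by homogeneity of the Lévy integrand under $\xi\mapsto e^{-t}\xi$. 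Plugging this back into the Fourier inversion formula, recognizing $\int \mathcal F(f)(\xi)(-i\langle x;\xi\rangle)e^{i\langle x;\xi\rangle}d\xi/(2\pi)^d = -\langle x;\nabla f(x)\rangle$ and, similarly, that the term with $i\langle\xi;u\rangle e^{i\langle\xi;u\rangle}$ produces $\langle\nabla f(x+u);u\rangle$ and the term with $i\langle\xi;u\rangle$ produces $\langle\nabla f(x);u\rangle$, assembles exactly \eqref{def:Gene}.

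The main obstacle is the rigorous justification of differentiation under the integral and of the interchange of the $d\xi$-integral with the $\nu(du)$-integral in this last step: one must produce an integrable-in-$(\xi,t,u)$ dominating function uniformly for small $t$. Here I would exploit that $\mathcal F(f)$ is Schwartz (so decays faster than any polynomial in $\xi$), that $|\varphi|\le 1$, that condition \eqref{cond:kx} bounds $k_x$ on compact $r$-intervals, and that $\int_{S^{d-1}}\int_0^\infty (r^2\wedge 1)\,k_x(r)\,dr/r\,\lambda(dx)<\infty$ together with the second-order Taylor estimate $|e^{i\langle\xi;u\rangle}-1-i\langle\xi;u\rangle|\le \tfrac12\|\xi\|^2\|u\|^2\wedge 2\|\xi\|\|u\|$ to dominate the small-$u$ and large-$u$ parts of the $\nu$-integral separately—this is the technical heart, and it is precisely the multivariate analogue, via the polar coordinates $u=rx$, of the estimate carried out in \cite[Proposition 5.1]{AH18}, which is why the proof can legitimately be said to be "very similar to the one-dimensional case."
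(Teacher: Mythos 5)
Your proposal is correct and follows essentially the same route as the paper: establish the semigroup law by factoring $\varphi_{t+s}(\xi)=\varphi_t(\xi)\varphi_s(e^{-t}\xi)$, obtain the $L^1(\mu_X)$-contraction from the self-decomposability identity $\mu_X=\mu_t*\big(\text{law of }e^{-t}X\big)$ plus Jensen, and compute the generator by differentiating the Fourier representation at $t=0$ with a dominating bound controlled via the polar decomposition and condition \eqref{cond:kx}. The only cosmetic difference is that the paper verifies invariance of $\mu_X$ and the semigroup law by computing characteristic functions of pushforward measures explicitly, and isolates the crucial $t$-uniform domination estimate in a separate appendix lemma (Lemma \ref{lem:MomBounds}(ii)), whereas you fold both into the running argument.
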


\begin{proof}
Let $f\in\mathcal{C}_b(\mathbb{R}^d)$. First, by \eqref{eq:2.2}, for all $s,t\geq 0$ and for all $x\in\mathbb{R}^d$,
\begin{align*}
P^\nu_{s+t}(f)(x)=\int_{\mathbb{R}^d} f(u+e^{-(s+t)}x)\mu_{t+s}(du).
\end{align*}
Moreover, for all $s,t\geq 0$,
\begin{align}\label{eq:sem1}
P^\nu_t\circ P^\nu_s(f)(x)&=\int_{\mathbb{R}^d} P^\nu_s(f)(u+e^{-t}x)\mu_t(du)\nonumber\\
&=\int_{\mathbb{R}^d\times\mathbb{R}^d}f(v+e^{-s}(u+e^{-t}x))\mu_t(du)\mu_s(dv).
\end{align}
Let $\psi_{s,t,x}$ be the measurable function defined by $\psi_{s,t,x}(u,v)=v+e^{-s}(u+e^{-t}x)$, for all $u,v\in \mathbb{R}^d\times\mathbb{R}^d$. Then, from \eqref{eq:sem1}, for all $s,t\geq 0$ and for all $x\in \mathbb{R}^d$,
\begin{align*}
P^\nu_t\circ P^\nu_s(f)(x)=\int_{\mathbb{R}^d\times\mathbb{R}^d}f(w)(\mu_t\otimes \mu_s)\circ \psi_{s,t,x}^{-1}(dw).
\end{align*}
Let us now compute the characteristic function of the probability measure $(\mu_t\otimes \mu_s)\circ \psi_{s,t,x}^{-1}$. For all $\xi \in \mathbb{R}^d$,
\begin{align*}
\int_{\mathbb{R}^d}e^{i\langle \xi;w\rangle}(\mu_t\otimes \mu_s)\circ \psi_{s,t,x}^{-1}(dw)&=\int_{\mathbb{R}^d\times \mathbb{R}^d}e^{i \langle\xi ; \psi_{s,t,x}(u,v)\rangle}\mu_t(du)\otimes \mu_s(dv)\\
&=e^{i \langle \xi; e^{-(s+t)}x \rangle}\int_{\mathbb{R}^d} e^{i\langle \xi ;v\rangle}e^{i \langle \xi ; e^{-s}u\rangle}\mu_t(du)\otimes \mu_s(dv)\\
&=e^{i \langle \xi; e^{-(s+t)}x \rangle} \varphi_s(\xi)\varphi_{t}(e^{-s}\xi)\\
&=e^{i \langle \xi; e^{-(s+t)}x \rangle} \dfrac{\varphi(\xi)}{\varphi(e^{-(s+t)}\xi)}\\
&=e^{i \langle \xi; e^{-(s+t)}x \rangle} \int_{\mathbb{R}^d} e^{i \langle \xi; u \rangle}\mu_{s+t}(du)\\
&=\int_{\mathbb{R}^d} e^{i\langle \xi; \varphi_{s,t,x}(u) \rangle}\mu_{s+t}(du)=\int_{\mathbb{R}^d} e^{i\langle \xi; u \rangle}\mu_{s+t}\circ \varphi_{s,t,x}^{-1}(du),
\end{align*}
where $\varphi_{s,t,x}(u)=e^{-(s+t)}x+u$, for all $x\in \mathbb{R}^d$, $s\geq 0$ and $t\geq 0$. This implies that
\begin{align*}
P^\nu_t\circ P^\nu_s(f)(x)=P^\nu_{s+t}(f)(x),
\end{align*}
and so the semigroup property is verified on $\mathcal{C}_b(\mathbb{R}^d)$. Moreover, 
\begin{align}\label{eq:inv1}
\int_{\mathbb{R}^d} P_t^\nu(f)(x)\mu_X(dx)=\int_{\mathbb{R}^d\times\mathbb{R}^d}f(u+e^{-t}x)\mu_t(du)\mu_X(dx).
\end{align}
Setting $\omega_{t}(u,x)=u+e^{-t}x$, for all $u\in \mathbb{R}^d$ and all $x\in\mathbb{R}^d$. \eqref{eq:inv1} then becomes
\begin{align}
\int_{\mathbb{R}^d} P_t^\nu(f)(x)\mu_X(dx)=\int_{\mathbb{R}^d}f(v)(\mu_t \otimes \mu_X)\circ \omega_{t}^{-1}(dv).
\end{align}
The characteristic function of the probability measure $(d\mu_t \otimes d\mu_X)\circ \omega_{t}^{-1}$ is, for all $\xi \in \mathbb{R}^d$,
\begin{align*}
\int_{\mathbb{R}^d}e^{i\langle \xi ;v \rangle}(\mu_t \otimes \mu_X)\circ \omega_{t}^{-1}(dv)&=\int_{\mathbb{R}^d\times \mathbb{R}^d}e^{i\langle \xi ;\omega_{t}(u,x) \rangle}\mu_t(du)\mu_X(dx)\\
&=\int_{\mathbb{R}^d\times \mathbb{R}^d} e^{i\langle \xi ;u+e^{-t}x \rangle}\mu_t(du)\mu_X(dx)\\
&=\varphi_t(\xi)\varphi(e^{-t}\xi)=\varphi(\xi).
\end{align*}
Hence, for all $t\in \mathbb{R}^d$,
\begin{align}
\int_{\mathbb{R}^d} P_t^\nu(f)(x)\mu_X(dx)=\int_{\mathbb{R}^d} f(x)\mu_X(dx),
\end{align}
and so the probability measure $\mu_X$ is invariant for the family of operators $(P_t^\nu)_{t\geq 0}$. One can further check as well, by Fourier arguments, that, for all $x\in \mathbb{R}^d$,
\begin{align}
\underset{t\rightarrow 0^+}{\lim} P^\nu_t(f)(x)=f(x),\quad\quad \underset{t\rightarrow +\infty}{\lim} P^\nu_t(f)(x)=\int_{\mathbb{R}^d} f(x)\mu_X(dx).
\end{align}
Finally, by Jensen inequality and the invariance property,
\begin{align*}
\int_{\mathbb{R}^d}|P_t^\nu(f)(x)|\mu_X(dx)&\leq \int_{\mathbb{R}^d}P^\nu_t(|f|)(x)\mu_X(dx)\\
&\leq \int_{\mathbb{R}^d}|f(x)|\mu_X(dx).
\end{align*}
Then, by the density of $\mathcal{C}_b(\mathbb{R}^d)$ in $L^1(\mu_X)$ (\cite[Corollary $4.2.2$]{Bo07}), we can extend the family of  operators $(P_t^\nu)_{t\geq 0}$ to functions in $L^1(\mu_X)$. Moreover, this extension still denoted again by $(P_t^\nu)_{t\geq 0}$ is a $C_0$-semigroup on $L^1(\mu_X)$. To end the proof of the lemma, let us compute the generator of this semigroup on $\mathcal{S}(\mathbb{R}^d)$. Let $f\in \mathcal{S}(\mathbb{R}^d)$. By Fourier inversion, for all $x\in \mathbb{R}^d$ and for all $t>0$,
\begin{align*}
\frac{1}{t}\left(P_t^\nu(f)(x)-f(x)\right)=\frac{1}{(2\pi)^dt}\int_{\mathbb{R}^d} \mathcal{F}(f)(\xi)e^{i \langle \xi ;x\rangle}\left(e^{i \langle \xi ;x \rangle (e^{-t}-1)}\frac{\varphi(\xi)}{\varphi(e^{-t} \xi)}-1\right)d\xi.
\end{align*}
First, for all $x\in\mathbb{R}^d$ and for all $\xi \in \mathbb{R}^d$,
\begin{align*}
\underset{t\rightarrow 0^+}{\lim}\frac{1}{t}\left(e^{i \langle \xi ;x \rangle (e^{-t}-1)}\frac{\varphi(\xi)}{\varphi(e^{-t} \xi)}-1\right)=-i \langle \xi;x \rangle+\sum_{i=1}^d \xi_i\dfrac{\partial_i(\varphi)(\xi)}{\varphi(\xi)}
\end{align*}
which is a well-defined limit since $X$ has finite first moment. Now, from the representation \eqref{eq:lvKh}, for all $\xi \in \mathbb{R}^d$
\begin{align*}
\sum_{j=1}^d \xi_j\dfrac{\partial_j(\varphi)(\xi)}{\varphi(\xi)}&=\sum_{j=1}^d \xi_j\left(i\bbe X_j+i\int_{\mathbb{R}^d}u_j \left(e^{i\langle u;\xi \rangle}-1\right)\nu(du)\right)\\
&= i \left(\langle \xi; \bbe X \rangle+\int_{\mathbb{R}^d} \langle \xi; u \rangle\left(e^{i\langle u;\xi \rangle}-1\right)\nu(du)\right).
\end{align*}
Moreover, by Lemma \ref{lem:MomBounds} (ii) of the Appendix, for all $t\in (0,1)$, $x\in \mathbb{R}^d$ and $\xi \in \mathbb{R}^d$
\begin{align*}
\frac{1}{t}\left| e^{i\langle \xi ;x \rangle (e^{-t}-1)}\frac{\varphi(\xi)}{\varphi(e^{-t} \xi)}-1\right|\leq C_1 \|\xi\|\|x\|+C_2 (\|\xi\|+\|\xi\|\|\bbe X\|+\|\xi\|^2),
\end{align*}
for some $C_1,C_2>0$ independent of $t$, $\xi$ and $x$. Then, by the dominated convergence theorem, 
\begin{align*}
\underset{t\rightarrow 0^+}{\lim}\frac{1}{t}\left(P_t^\nu(f)(x)-f(x)\right)=\frac{1}{(2\pi)^d}\int_{\mathbb{R}^d} \mathcal{F}(f)(\xi)e^{i \langle \xi ;x\rangle}\bigg(&-i \langle \xi;x \rangle+i \langle \xi; \bbe X \rangle\\
&+i\int_{\mathbb{R}^d} \langle \xi; u \rangle\left(e^{i\langle u;\xi \rangle}-1\right)\nu(du) \bigg)d\xi,
\end{align*}
which is equal, by standard Fourier arguments, to
\begin{align*}
\underset{t\rightarrow 0^+}{\lim}\frac{1}{t}\left(P_t^\nu(f)(x)-f(x)\right)=\langle \nabla(f)(x); \bbe X-x \rangle+\int_{\mathbb{R}^d}\langle \nabla(f)(x+u)-\nabla(f)(x) ;u\rangle \nu(du).
\end{align*}
This concludes the proof of \eqref{def:Gene} and of the lemma.
\end{proof}
\noindent
Let $h \in \mathcal{H}_r\cap \mathcal{C}^{\infty}_c(\mathbb{R}^d)$, for some $r\geq 1$. The aim of this section is to solve, for all $x\in \mathbb{R}^d$, the following integro-partial differential equation,
\begin{align}\label{eq:Stein}
\langle \bbe X-x;\nabla (f)(x) \rangle+\int_{\mathbb{R}^d}\langle \nabla(f)(x+u)-\nabla(f)(x) ;u\rangle \nu(du)=h(x)-\bbe h(X),
\end{align}
which will serve as the fundamental equation in our Stein's methodology for non-degenerate self-decomposable law. As done in the one-dimensional case in \cite{AH18}, we first introduce a potential candidate solution for this equation, then study its regularity and finally prove that it actually solves the equation \eqref{eq:Stein}. The following proposition is concerned with the existence and the regularity of the candidate solution.

\begin{prop}\label{prop:SteinSol1}
Let $X$ be a non-degenerate self-decomposable random vector in $\mathbb{R}^d$ without Gaussian component, with law $\mu_X$, characteristic function $\varphi$ and such that $\bbe \|X\|<\infty$, with moreover the functions $k_x$ given by \eqref{rep:sd} satisfying \eqref{cond:kx}.
Let $(P^\nu_t)_{t\geq 0}$ be the semigroup of operators as in Lemma \ref{lem:SG}. Then, for any $h\in \mathcal{H}_2$, the function $f_h$ given, for all $x\in \mathbb{R}^d$, by
\begin{align}\label{eq:solStein1}
f_h(x)=-\int_{0}^{+\infty} (P^\nu_t(h)(x)-\bbe h(X))dt,
\end{align}
is well defined and twice continuously differentiable function on $\mathbb{R}^d$. Moreover, for any $\alpha\in \mathbb{N}^d$ such that $|\alpha|=1$,
\begin{align}
\| D^\alpha (f_h) \|_\infty \leq 1,
\end{align}
and, for any $\alpha\in \mathbb{N}^d$ such that $|\alpha|=2$,
\begin{align}
\| D^\alpha (f_h) \|_\infty \leq \frac{1}{2}.
\end{align}
\end{prop}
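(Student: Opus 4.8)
The plan is to verify that the integral defining $f_h$ converges absolutely, and then to differentiate under the integral sign, using the representation \eqref{eq:2.2} of $P^\nu_t$ as a convolution/scaling operator to transfer derivatives in $x$ onto $h$. First I would record, for $h \in \mathcal{H}_2$ and the semigroup written as $P^\nu_t(h)(x) = \int_{\mathbb{R}^d} h(u + e^{-t}x)\mu_t(du)$, the identity
\begin{align*}
P^\nu_t(h)(x) - \bbe h(X) = \int_{\mathbb{R}^d}\left(h(u+e^{-t}x) - h(u+v)\right)\mu_X(dv) \text{-type decomposition,}
\end{align*}
more precisely exploiting that $\mu_X = \mu_t * (\mu_X \circ (e^{-t}\cdot)^{-1})$ (the defining self-decomposability relation $\varphi(\xi) = \varphi_t(\xi)\varphi(e^{-t}\xi)$), so that $\bbe h(X) = \int\!\!\int h(u + e^{-t}y)\,\mu_t(du)\,\mu_X(dy)$. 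Hence $P^\nu_t(h)(x) - \bbe h(X) = \int\!\!\int \left(h(u+e^{-t}x) - h(u+e^{-t}y)\right)\mu_t(du)\,\mu_X(dy)$, and since $h \in \mathcal{H}_2 \subset \mathrm{Lip}$ with $M_1(h)\le 1$, the integrand is bounded by $e^{-t}\|x-y\|$. This gives $|P^\nu_t(h)(x) - \bbe h(X)| \le e^{-t}\int \|x-y\|\mu_X(dy) \le e^{-t}(\|x\| + \bbe\|X\|)$, which is integrable in $t$ over $(0,+\infty)$; thus $f_h$ is well defined and pointwise finite.

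Next, for the first derivatives, I would differentiate the convolution form: $D^\alpha_x\, P^\nu_t(h)(x) = e^{-t}\int_{\mathbb{R}^d} D^\alpha(h)(u+e^{-t}x)\mu_t(du)$ for $|\alpha| = 1$, so that $\|D^\alpha P^\nu_t(h)\|_\infty \le e^{-t}\|D^\alpha(h)\|_\infty \le e^{-t}$ since $M_1(h)\le 1$. The key point is justifying the interchange of $D^\alpha$ with $\int_0^\infty dt$: this follows from the uniform bound $\|D^\alpha P^\nu_t(h)\|_\infty \le e^{-t}$, which is integrable, together with continuity of $x \mapsto D^\alpha P^\nu_t(h)(x)$ (dominated convergence inside $\mu_t$). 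Then $D^\alpha(f_h)(x) = -\int_0^\infty e^{-t}\int D^\alpha(h)(u+e^{-t}x)\mu_t(du)\,dt$, whence $\|D^\alpha(f_h)\|_\infty \le \int_0^\infty e^{-t}\,dt = 1$. For $|\alpha| = 2$, differentiating once more brings down a second factor $e^{-t}$: $D^\alpha_x P^\nu_t(h)(x) = e^{-2t}\int D^\alpha(h)(u+e^{-t}x)\mu_t(du)$, so $\|D^\alpha P^\nu_t(h)\|_\infty \le e^{-2t}$ using $M_2(h) \le 1$, and interchanging derivative and $t$-integral as before yields $\|D^\alpha(f_h)\|_\infty \le \int_0^\infty e^{-2t}\,dt = \tfrac12$.

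The main obstacle is the rigorous justification of differentiating twice under both the $\mu_t$-integral and the $t$-integral, for which one needs (i) enough smoothness and decay of $h$ — here $h \in \mathcal{H}_2$ suffices since it is $C^2$ with bounded derivatives, though to be fully careful one may first prove the estimates for $h \in \mathcal{H}_2 \cap C^\infty_c(\mathbb{R}^d)$ and then pass to general $h \in \mathcal{H}_2$ by the approximation afforded by Lemma \ref{lem:repsmooth0} — and (ii) the domination of the difference quotients, uniformly for $x$ in compact sets, by an integrable function of $(u,t)$; the scaling factors $e^{-t}$, $e^{-2t}$ produced by the chain rule are exactly what makes the $t$-integral converge and deliver the constants $1$ and $1/2$. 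I would also note that $f_h$ being $C^2$ (as opposed to merely having the stated bounds) follows from the same dominated-convergence argument applied to continuity of the second derivatives. Finiteness of $\bbe\|X\|$ is used only for the well-definedness step, not for the derivative bounds.
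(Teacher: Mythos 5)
Your proof is correct, and for the well-definedness of $f_h$ you take a genuinely different, and in fact more elementary, route than the paper. The paper bounds
\begin{align*}
\left| P^\nu_t(h)(x)-\bbe h(X)\right| \le \|x\|\, e^{-t} M_1(h) + \left|\bbe h(X_t)-\bbe h(X)\right| \le \|x\|\, e^{-t} + d_{W_2}(\mu_{X_t},\mu_X),
\end{align*}
and then invokes Theorem~\ref{thm:smoothing} (the quantitative smoothing estimate $d_{W_1}(\mu_{X_t},\mu_X)\le C_d\, e^{-t/(2^{d+1}(d+1))}$) to close the argument, which in turn depends on the technical lemmas of the Appendix. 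You instead exploit the self-decomposability convolution identity $X \stackrel{d}{=} X_t + e^{-t}\tilde X$ (equivalently $\varphi(\xi)=\varphi_t(\xi)\varphi(e^{-t}\xi)$) to write $\bbe h(X) = \int\!\int h(u+e^{-t}y)\,\mu_t(du)\,\mu_X(dy)$, yielding directly
\begin{align*}
\left|P^\nu_t(h)(x)-\bbe h(X)\right| = \left|\int\!\!\int \bigl(h(u+e^{-t}x)-h(u+e^{-t}y)\bigr)\,\mu_t(du)\,\mu_X(dy)\right| \le e^{-t}\bigl(\|x\|+\bbe\|X\|\bigr),
\end{align*}
using only $M_1(h)\le 1$ and $\bbe\|X\|<\infty$. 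This is cleaner: it avoids Theorem~\ref{thm:smoothing} entirely, gives a pure $e^{-t}$ decay rate with no dimension-dependent exponent, and makes the finiteness of the first moment do exactly the work it should. For the derivative bounds your argument is the same as the paper's: commute $\partial_j$ with $P^\nu_t$ via the scaling relation $\partial_j\bigl(P^\nu_t(h)\bigr)(x) = e^{-t}P^\nu_t(\partial_j h)(x)$, pick up one factor $e^{-t}$ per derivative, and integrate in $t$ to get the constants $1$ and $1/2$. Your remark about first establishing the bounds for $h\in\mathcal{H}_2\cap C^\infty_c$ and then passing to general $h\in\mathcal{H}_2$ is an unnecessary detour here, since $h\in\mathcal{H}_2$ already has bounded continuous second derivatives and that is all the dominated-convergence argument needs; but it does no harm.
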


\begin{proof}
Let $h\in \mathcal{H}_2$. By \eqref{eq:2.2} and Theorem \ref{thm:smoothing} of the Appendix, for all $x\in \mathbb{R}^d$,
\begin{align*}
\left| P^\nu_t(h)(x)-\bbe h(X)\right|&\leq  \|x\| e^{-t} M_1(h) +d_{W_2}(X,X_t)\\
&\leq \|x\| e^{-t} +C_d e^{-\frac{t}{2^{d+1}(d+1)}},
\end{align*}
and so the function
\begin{align*}
f_h(x)=-\int_{0}^{+\infty} (P^\nu_t(h)(x)-\bbe h(X))dt,
\end{align*}
is well defined for all $x\in \mathbb{R}^d$. Moreover, by \eqref{eq:2.2} and the regularity of $h \in \mathcal{H}_2$, for all $1\leq j\leq d$ and for all $x\in \mathbb{R}^d$,
\begin{align*}
\partial_j(f_h)(x)=-\int_{0}^{+\infty} e^{-t}(P^\nu_t(\partial_j(h))(x))dt,
\end{align*}
which implies that, for all $1\leq j\leq d$ and for all $x\in \mathbb{R}^d$,
\begin{align*}
\left| \partial_j(f_h)(x)\right| \leq 1.
\end{align*}
Similarly for all $i,j\in \{1,..,d\}$ and for all $x\in\mathbb{R}^d$,
\begin{align*}
\partial^2_{ij}(f_h)(x)=-\int_{0}^{+\infty} e^{-2t}(P^\nu_t(\partial^2_{ij}(h))(x))dt,
\end{align*}
which implies that
\begin{align*}
\left|\partial^2_{ij}(f_h)(x)\right|\leq \frac{1}{2},
\end{align*}
and concludes the proof of the proposition.
\end{proof}

\begin{prop}\label{prop:SteinSol2}
Let $X$ be a non-degenerate self-decomposable random vector in $\mathbb{R}^d$ without Gaussian component, with law $\mu_X$, characteristic function $\varphi$, and such that $\bbe \|X\|<\infty$ with moreover the function $k_x$ given by \eqref{rep:sd} satisfying \eqref{cond:kx}. Further, let $(X_t)_{t\geq 0}$ be the collection of random vectors such that, for all $t\geq 0$, $X_t$ has law $\mu_t$ given by \eqref{eq:2.1} with $\gamma=e^{-t}$. For each $t>0$, let $\mu_t$ be absolutely continuous with respect to the $d$-dimensional Lebesgue measure and let its Radon-Nikodym derivative, denoted by $q_t$, be continuously differentiable on $\mathbb{R}^d$ and such that, for all $1\leq i\leq d$,
\begin{align}\label{HP:denmut}
\underset{y_i\rightarrow \pm \infty}{\lim} q_t(y)=0,\quad \int_{\mathbb{R}^d} \left|\partial_i(q_t)(y)\right|dy<\infty,\quad \int_0^{+\infty} e^{-2t} \left(\int_{\mathbb{R}^d} \left|\partial_i(q_t)(y)\right|dy\right) dt<\infty.
\end{align}
Let $h\in \mathcal{H}_1$ and $(P^\nu_t)_{t\geq 0}$ be the semigroup of operators as in Lemma \ref{lem:SG}. Then, the function $f_h$ given, for all $x\in \mathbb{R}^d$, by
\begin{align}
f_h(x)=-\int_{0}^{+\infty} (P^\nu_t(h)(x)-\bbe h(X))dt,
\end{align}
is a well defined twice continuously differentiable function on $\mathbb{R}^d$. Moreover, for any $\alpha\in \mathbb{N}^d$ such that $|\alpha|=1$
\begin{align}
\| D^\alpha (f_h) \|_\infty \leq 1,
\end{align}
and, for any $\alpha\in \mathbb{N}^d$ such that $|\alpha|=2$
\begin{align}
\| D^\alpha (f_h) \|_\infty \leq C_d.
\end{align}
for some $C_d>0$ only depending on $d$.
\end{prop}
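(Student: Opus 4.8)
The plan is to mimic the proof of Proposition \ref{prop:SteinSol1}, the only new feature being that $h$ is now merely of class $\mathcal{C}^1$, so that the two order-two derivatives of $f_h$ cannot be obtained by differentiating $h$ twice; instead one derivative must be moved onto the density $q_t$ of $\mu_t$, and this is exactly where the three requirements in \eqref{HP:denmut} enter.

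\emph{Well-definedness and first derivative.} Using \eqref{eq:2.1} with $\gamma=e^{-t}$ one has $X\stackrel{d}{=}X_t+e^{-t}X'$ with $X'\sim\mu_X$ independent of $X_t\sim\mu_t$; hence, by \eqref{eq:2.2} and $M_1(h)\le 1$,
\begin{align*}
\left|P^\nu_t(h)(x)-\bbe h(X)\right|=\left|\bbe\left[h(X_t+e^{-t}x)-h(X_t+e^{-t}X')\right]\right|\le e^{-t}M_1(h)\big(\|x\|+\bbe\|X\|\big),
\end{align*}
which is $dt$-integrable on $(0,+\infty)$, so $f_h$ is well defined on $\mathbb{R}^d$ (one could equally invoke Theorem \ref{thm:smoothing} of the Appendix as in Proposition \ref{prop:SteinSol1}). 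Since $h\in\mathcal{C}^1$ with $M_1(h)\le1$, differentiating \eqref{eq:2.2} under the integral gives $\partial_{x_j}P^\nu_t(h)(x)=e^{-t}P^\nu_t(\partial_j h)(x)$, and as $|e^{-t}P^\nu_t(\partial_j h)(x)|\le e^{-t}M_1(h)$ is $dt$-integrable one may differentiate once more under the $dt$-integral, obtaining $\partial_j(f_h)(x)=-\int_0^{+\infty}e^{-t}P^\nu_t(\partial_j h)(x)\,dt$, whence $\|\partial_j(f_h)\|_\infty\le M_1(h)\le1$, continuity of $\partial_j(f_h)$ following from dominated convergence.

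\emph{Second derivative.} Fix $t>0$ and $1\le i,j\le d$ and write, via \eqref{eq:2.2},
\begin{align*}
P^\nu_t(\partial_j h)(x)=\int_{\mathbb{R}^d}\partial_{u_j}\!\big[h(u+e^{-t}x)\big]\,q_t(u)\,du.
\end{align*}
Integrating by parts in $u_j$, the boundary terms vanish because $\|h\|_\infty<\infty$ and $q_t(y)\to0$ as $y_i\to\pm\infty$, and since $\partial_j q_t\in L^1(\mathbb{R}^d)$ (the first two conditions in \eqref{HP:denmut}) one gets $P^\nu_t(\partial_j h)(x)=-\int_{\mathbb{R}^d}h(u+e^{-t}x)(\partial_j q_t)(u)\,du$. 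Differentiation in $x_i$ is now legitimate since the $x_i$-derivative of the integrand is bounded by $e^{-t}M_1(h)\,|\partial_j q_t(u)|\in L^1(du)$, uniformly in $x$, so that
\begin{align*}
\partial_{x_i}\big[P^\nu_t(\partial_j h)(x)\big]=-e^{-t}\int_{\mathbb{R}^d}(\partial_i h)(u+e^{-t}x)(\partial_j q_t)(u)\,du.
\end{align*}
As the modulus of $e^{-t}\partial_{x_i}[P^\nu_t(\partial_j h)(x)]$ is at most $e^{-2t}M_1(h)\int_{\mathbb{R}^d}|\partial_j q_t(u)|\,du$ and the third condition in \eqref{HP:denmut} makes this $dt$-integrable, one differentiates $\partial_j(f_h)$ under the $dt$-integral, obtaining
\begin{align*}
\partial^2_{ij}(f_h)(x)=\int_0^{+\infty}e^{-2t}\left(\int_{\mathbb{R}^d}(\partial_i h)(u+e^{-t}x)(\partial_j q_t)(u)\,du\right)dt.
\end{align*}
Dominated convergence shows this is continuous in $x$, so $f_h\in\mathcal{C}^2(\mathbb{R}^d)$, and $M_1(h)\le1$ gives $|\partial^2_{ij}(f_h)(x)|\le\int_0^{+\infty}e^{-2t}\big(\int_{\mathbb{R}^d}|\partial_j q_t(u)|\,du\big)\,dt\le C_d$, with $C_d:=\max_{1\le \ell\le d}\int_0^{+\infty}e^{-2t}\big(\int_{\mathbb{R}^d}|\partial_\ell q_t(u)|\,du\big)\,dt<\infty$.

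The only delicate point is this last step: transferring one derivative from the merely $\mathcal{C}^1$ function $h$ onto $q_t$ by integration by parts (which forces the decay requirement on $q_t$) and then justifying the two successive interchanges of differentiation and integration — in the space variable $x$ and in the time variable $t$ — each of which is exactly controlled by one of the three hypotheses in \eqref{HP:denmut}; the remainder is a verbatim repetition of the argument of Proposition \ref{prop:SteinSol1}.
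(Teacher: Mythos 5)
Your proof is correct, and the core of the argument — moving one derivative from $h$ onto $q_t$ by integration by parts, then justifying the two interchanges of differentiation and integration via the three conditions of \eqref{HP:denmut} — is exactly the paper's strategy. The one genuine difference lies in the well-definedness step: the paper bounds $|P^\nu_t(h)(x)-\bbe h(X)|$ by $\|x\|e^{-t}M_1(h)+d_{W_1}(X,X_t)$ and then invokes the (fairly technical) smoothing estimate of Theorem \ref{thm:smoothing} to control $d_{W_1}(X,X_t)$; you instead use the self-decomposable factorization $X\stackrel{d}{=}X_t+e^{-t}X'$ directly to write $P^\nu_t(h)(x)-\bbe h(X)=\bbe[h(X_t+e^{-t}x)-h(X_t+e^{-t}X')]$, which is bounded by $e^{-t}M_1(h)(\|x\|+\bbe\|X\|)$ with no extra machinery. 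Your route is more elementary, avoids the appendix theorem entirely, and yields the sharper rate $e^{-t}$; the paper's route is simply reusing a lemma it needs elsewhere. (As a minor aside, the formula you obtain for $\partial^2_{ij}(f_h)$ carries the opposite sign to the paper's displayed expression; tracing the differentiation of $-\int h(xe^{-t}+y)\partial_i(q_t)(y)\,dy$ in $x_j$ shows your sign is the correct one — the paper dropped a minus — but since only the absolute value is used for the bound, neither the paper's conclusion nor yours is affected.)
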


\begin{proof}
Let $h\in \mathcal{H}_1$. By By \eqref{eq:2.2} and Theorem \ref{thm:smoothing}, for all $x\in \mathbb{R}^d$
\begin{align*}
\left| P^\nu_t(h)(x)-\bbe h(X)\right|&\leq \|x\| e^{-t}M_1(h) +d_{W_1}(X,X_t)\\
&\leq  \|x\| e^{-t} +C_d e^{-\frac{t}{2^{d+1}(d+1)}}
\end{align*}
and so the function
\begin{align*}
f_h(x)=-\int_{0}^{+\infty} (P^\nu_t(h)(x)-\bbe h(X))dt,
\end{align*}
is well defined for all $x\in \mathbb{R}^d$. Moreover, by \eqref{eq:2.2} and the regularity of $h\in \mathcal{H}_1$, for all $1\leq j\leq d$ and for all $x\in \mathbb{R}^d$,
\begin{align*}
\partial_j(f_h)(x)=-\int_{0}^{+\infty} e^{-t}(P^\nu_t(\partial_j(h))(x))dt,
\end{align*}
which implies that, for all $1\leq j\leq d$ and for all $x\in \mathbb{R}^d$
\begin{align*}
\left| \partial_j(f_h)(x)\right| \leq 1.
\end{align*}
Let us fix $1\leq i\leq d$ and $x\in \mathbb{R}^d$. By definition and an integration by parts (thanks to \eqref{HP:denmut}),
\begin{align*}
P^\nu_t(\partial_i(h))(x)&= \int_{\mathbb{R}^d}\partial_i(h)(xe^{-t}+y) q_t(y)dy\\
&=-\int_{\mathbb{R}^d} h(xe^{-t}+y)\partial_i(q_t)(y)dy.
\end{align*}
Thus, for all $1\leq i,j\leq d$ and $x\in \mathbb{R}^d$,
\begin{align*}
\partial_j\left(P^\nu_t(\partial_i(h))(x)\right)&=e^{-t}\int_{\mathbb{R}^d}\partial_j(h)(xe^{-t}+y)\partial_i(q_t)(y)dy.
\end{align*}
This representation together with the third condition in \eqref{HP:denmut} ensures that $f_h$ is twice continuously differentiable on $\mathbb{R}^d$ and that, for all $x\in \mathbb{R}^d$ and for all $1\leq i,j\leq d$
\begin{align*}
\partial_{i,j}(f_h)(x)=-\int_0^{+\infty}e^{-2t}\left(\int_{\mathbb{R}^d}\partial_j(h)(xe^{-t}+y)\partial_i(q_t)(y)dy\right)dt.
\end{align*}
Finally, for all $x\in \mathbb{R}^d$ and all $1\leq i,j\leq d$,
\begin{align*}
\left|\partial_{i,j}(f_h)(x)\right|\leq \int_0^{+\infty}e^{-2t}\left(\int_{\mathbb{R}^d}\left|\partial_i(q_t)(y)\right|dy\right)dt<+\infty.
\end{align*}
This concludes the proof of the proposition.
\end{proof}
\noindent
Before showing that $f_h$ as given in the two previous propositions
is a solution to the integro-partial differential equation
\begin{align*} 
\langle \bbe X-x;\nabla (f)(x) \rangle+\int_{\mathbb{R}^d}\langle \nabla(f)(x+u)-\nabla(f)(x) ;u\rangle \nu(du)=h(x)-\bbe h(X),\quad\quad x\in \mathbb{R}^d,
\end{align*}
for $h$ respectively in $\mathcal{H}_2 \cap \mathcal{C}^{\infty}_c(\mathbb{R}^d)$ and $\mathcal{H}_1 \cap \mathcal{C}^{\infty}_c(\mathbb{R}^d)$, we provide some examples of non-degenerate self-decomposable random vectors whose law $\mu_X$ satisfies the assumptions of the aforementioned propositions.\\
\\
\textbf{\large Some Examples}\\
\textbf{Rotationally invariant $\alpha$-stable random vector in $\mathbb{R}^d$}.
Let $\alpha\in (1,2)$ and let $X$ be an $\alpha$-stable random vector whose law is rotationally invariant. Then, its characteristic function $\varphi$ is, for all $\xi\in \mathbb{R}^d$
\begin{align*}
\varphi(\xi)=\exp\left(-C_{\alpha,d}\|\xi\|^\alpha\right),
\end{align*}
for some constant $C_{\alpha,d}>0$ depending on $\alpha$ and $d$. Hence, the characteristic function of $\mu_t$ is given, for all $\xi\in \mathbb{R}^d$, and $t\geq 0$ by
\begin{align*}
\varphi_t(\xi)=\exp\left(-C_{\alpha,d}(1-e^{-\alpha t})\|\xi\|^\alpha\right).
\end{align*}
Thus, for all $t> 0$, $\mu_t$ is absolutely continuous with respect to the Lebesgue measure and its density $q_t$ is given for all $x\in \mathbb{R}^d$, by the Fourier inversion formula, 
\begin{align}\label{eq:FourRepDen}
q_t(x)=\frac{1}{(2\pi)^d}\int_{\mathbb{R}^d}e^{i \langle \xi; x \rangle}\overline{\varphi_t(\xi)}d\xi=\frac{1}{(2\pi)^d}\int_{\mathbb{R}^d}e^{i \langle \xi; x \rangle}\exp\left(-C_{\alpha,d}(1-e^{-\alpha t})\|\xi\|^\alpha\right)d\xi.
\end{align}
From \eqref{eq:FourRepDen}, it is clear that $q_t$ is continuously differentiable on $\mathbb{R}^d$ and that, for all $x\in \mathbb{R}^d$ and $1\leq j\leq d$,
\begin{align*}
\partial_j(q_t)(x)=\frac{1}{(2\pi)^d}\int_{\mathbb{R}^d}e^{i \langle \xi; x \rangle}(i\xi_j)\exp\left(-C_{\alpha,d}(1-e^{-\alpha t})\|\xi\|^\alpha\right)d\xi.
\end{align*}
Moreover, the characteristic function $\varphi_t$ is linked to the Fourier transform of the probability transition density of a rotationally invariant $d$-dimensional $\alpha$-stable process after the time change $\tau=(1-e^{-\alpha t})$. Indeed, if $(Z^\alpha_t)_{t\geq 0}$ is a rotationally invariant $d$-dimensional $\alpha$-stable L\'evy process, then its characteristic function at time $t$ is given, for all $t\geq 0$ and all $\xi\in \mathbb{R}^d$, by
\begin{align*}
\bbe\, e^{i \langle \xi ;Z^\alpha_t\rangle}=\exp\left(-\dfrac{t\|\xi\|^{\alpha}}{2^{\frac{\alpha}{2}}}\right).
\end{align*}
Thus, for all $\xi\in \mathbb{R}^d$ and all $t\geq 0$
\begin{align*}
\varphi_t(\xi)=\bbe\, e^{i \langle \xi ;\sqrt{2}C_{\alpha,d}^{\frac{1}{\alpha}}Z^\alpha_{1-e^{-\alpha t}}\rangle}.
\end{align*}
Finally, Lemma $2.2$ of \cite{CZ16} implies that the density $q_t$ and its gradient satisfy the following inequality, for all $x\in \mathbb{R}^d$
\begin{align*}
|q_t(x)| \leq C'_{\alpha,d} \dfrac{(1-e^{-\alpha t})}{\left((1-e^{-\alpha t})^{\frac{1}{\alpha}}+\|x\|\right)^{\alpha+d}},\quad
\|\nabla (q_t)(x)\|\leq C''_{\alpha,d} \dfrac{(1-e^{-\alpha t})}{\left((1-e^{-\alpha t})^{\frac{1}{\alpha}}+\|x\|\right)^{\alpha+d+1}},
\end{align*} 
for some $C'_{\alpha,d}, C''_{\alpha,d}>0$ only depending on $\alpha$ and $d$. It follows that $q_t$ satisfies the conditions \eqref{HP:denmut}.\\
\\
\textbf{Symmetric $\alpha$-stable random vector in $\mathbb{R}^d$}. Let $\alpha \in (1,2)$ and let $X$ be a symmetric $\alpha$-stable random vector on $\mathbb{R}^d$. By \cite[Theorem 14.13]{S}, the characteristic function of $X$ is given by, for all $\xi \in \mathbb{R}^d$ with $\|\xi\|\ne 0$
\begin{align*}
\varphi(\xi)=\exp\left(-\int_{S^{d-1}}\left|\langle x;\xi \rangle\right|^\alpha \lambda_1(dx)\right)=\exp\left(-\|\xi\|^\alpha\int_{S^{d-1}}\left|\left\langle x;\frac{\xi}{\|\xi\|} \right\rangle\right|^\alpha \lambda_1(dx)\right),
\end{align*}
where $\lambda_1$ is a symmetric positive finite measure on $S^{d-1}$. Then, for all $\xi \in \mathbb{R}^d$ and all $t\geq 0$,
\begin{align*}
\varphi_t(\xi)=\exp\left(-(1-e^{-\alpha t})\|\xi\|^\alpha\int_{S^{d-1}}\left|\left\langle x;\frac{\xi}{\|\xi\|} \right\rangle\right|^\alpha \lambda_1(dx)\right).
\end{align*}
Moreover, let's assume that there exists $c_0>0$ such that for any $u\in S^{d-1}$, $\int_{S^{d-1}}|\langle x;u \rangle|^\alpha \lambda_1(dx)\geq c_0$. Then, $\mu_t$ is absolutely continuous with respect to the $d$-dimensional Lebesgue measure and its density $q_t$ is given, for all $t> 0$ and all $x\in \mathbb{R}^d$, by
\begin{align*}
q_t(x)=\frac{1}{(2\pi)^d}\int_{\mathbb{R}^d}e^{i \langle \xi; x \rangle}\overline{\varphi_t(\xi)}d\xi=\frac{1}{(2\pi)^d}\int_{\mathbb{R}^d}e^{i \langle \xi; x \rangle}\exp\left(-(1-e^{-\alpha t})\|\xi\|^\alpha\eta_\alpha(\xi)\right)d\xi,
\end{align*}
with $\eta_\alpha(\xi)=\int_{S^{d-1}}|\langle x;\frac{\xi}{\|\xi\|} \rangle|^\alpha \lambda_1(dx)$. For all $t>0$, $q_t$ is continuously differentiable on $\mathbb{R}^d$ and its partial derivative in direction $j\in \{1,...,d\}$ is given by, for all $x\in \mathbb{R}^d$
\begin{align*}
\partial_j(q_t)(x)=\frac{1}{(2\pi)^d}\int_{\mathbb{R}^d}e^{i \langle \xi; x \rangle}(i\xi_j)\exp\left(-(1-e^{-\alpha t})\|\xi\|^\alpha\eta_\alpha(\xi)\right)d\xi.
\end{align*}
\\
\textbf{Takano distribution \cite{T89}}. Let $\alpha\in (0,+\infty)$ and let $\mu_{\alpha,d}$ be the probability measure on $\mathbb{R}^d$ given by
\begin{align*}
\mu_{\alpha,d}(dx)=c_{\alpha,d}\left(1+\|x\|^2\right)^{-\alpha-d/2}dx,
\end{align*}
where $c_{\alpha,d}>0$ is a normalizing constant. As shown in \cite{T89} such a probability measure is self-decomposable. Moreover, its characteristic function is given, for all $\xi\in \mathbb{R}^d$, by (\cite[Theorem II]{T89})
\begin{align*}
\varphi(\xi)=&\exp\bigg(\int_0^{+\infty}2(2\pi)^{-d/2}v^{d/2}\left(\int_0^{+\infty}(\sqrt{2w})^{(d+2)/2}K_{(d-2)/2}(\sqrt{2w}v)g_\alpha(2w)dw\right)dv\\ 
&\quad\quad\quad \times\int_{S^{d-1}}dx \int_0^v \left(e^{i u x \xi}-1-\dfrac{i u x\xi}{1+u^2}\right)\frac{du}{u}\bigg),
\end{align*}
where $K_{(d-2)/2}$ denotes the modified Bessel function of order $(d-2)/2$ while $g_\alpha(w)=2/(\pi^2 w )\\\times1/\left(J_\alpha^2(\sqrt{w})+Y_\alpha^2(\sqrt{w})\right))$, $w>0$, with $J_\alpha$ and $Y_\alpha$ the Bessel functions of the first kind and of the second kind, respectively (see \cite[Chapter $10$]{OLBC10} for definitions of Bessel functions). Finally, the L\'evy measure of $\mu_{\alpha,d}$ is given by (see \cite[Representation $(2)$ page $23$]{T89})
\begin{align*}
\nu(du)=\frac{2}{\|u\|^d} \left(\int_0^{+\infty}g_\alpha(2w)L_{d/2}\left(\sqrt{2w}\|u\|\right)dw\right) du
\end{align*}
where $L_{d/2}(v)=(2\pi)^{-d/2}v^{d/2}K_{d/2}(v)$, for all $v>0$. This implies the following polar decomposition for $\nu$
\begin{align*}
\nu(du)=\bbone_{(0,+\infty)}(r)\bbone_{S^{d-1}}(x)\frac{2}{r} \left(\int_0^{+\infty}g_\alpha(2w)L_{d/2}\left(\sqrt{2w}r\right)dw\right) dr\sigma(dx),
\end{align*}
where $\sigma$ is the uniform measure on $S^{d-1}$. Hence, condition \eqref{cond:kx} is automatically satisfied. So, our methodology applies as soon as $\alpha>1/2$ (which ensures that $\int_{\mathbb{R}^d}\|x\|\mu_{\alpha,d}(dx)<+\infty$).\\
\\
\textbf{Takano distribution \cite{T88}}: Let $\mu$ be the probability measure on $\mathbb{R}^d$ given by
\begin{align*}
\mu(dx)=C \exp\left(-\|x\|\right)dx,
\end{align*}
where $C>0$ is a normalizing constant. Thanks to \cite[Result $1$]{T88}, its characteristic function $\varphi$ is given by
\begin{align*}
\varphi(\xi)=\exp\left(\int_{\mathbb{R}^d}\left(e^{i \langle \xi;u \rangle}-1\right)\frac{M(\|u\|)}{\|u\|^d}du\right),\quad \xi\in \mathbb{R}^d,
\end{align*}
with $M(w)=(2\pi)^{-d/2}(d+1)w^{d/2}K_{d/2}(w)$, for $w>0$. Hence, $\mu$ is an infinitely divisible probability measure on $\mathbb{R}^d$. Moreover, the function $M$ admits the following representation (see the last formula page $64$ of \cite{T88})
\begin{align*}
M(w)=C_d \int_{w}^{+\infty} v^{d/2} K_{(d-2)/2}(v)dv, \quad w>0,
\end{align*}
which is non-negative and non-increasing on $(0,+\infty)$ (and $C_d>0$). Thus, $\mu$ is self-decomposable and its L\'evy measure admits the following polar decomposition
\begin{align*}
\nu(du)=\bbone_{(0,+\infty)}(r)\bbone_{S^{d-1}}(x)\frac{M(r)}{r}dr \sigma(dx),
\end{align*}
where $\sigma$ is the uniform measure on the Euclidean unit sphere. Finally, the associated $k_x$ functions satisfy \eqref{cond:kx} and the probability measure $\mu$ admits moments of any orders.\\
\\
\textbf{Multivariate gamma distributions}. Let $(\alpha_1,\dots,\alpha_d)\in (0,+\infty)^d$ and let $X=(X_1,\dots,X_d)$ be a random vector whose independent coordinates are distributed according to gamma laws with parameters $(\alpha_i,1)$, $1\leq i\leq d$. Namely, the characteristic function of $X$ is given by
\begin{align*}
\varphi(\xi)=\prod_{j=1}^d \left(1-i\xi_j\right)^{-\alpha_j},\quad \xi\in \mathbb{R}^d.
\end{align*} 
For any $b>1$, there exists $\rho_b$, a probability measure on $\mathbb{R}^d$, such that, $\varphi(\xi)=\varphi(\xi/b)\hat{\rho}_b(\xi)$, for all $\xi \in \mathbb{R}^d$. Indeed, take $\rho_b=\rho_{1,b}\otimes ...\otimes \rho_{d,b}$, where, for all $1\leq j\leq d$, $\rho_{j,b}$ are defined, for all $\xi_j\in \mathbb{R}$, by
\begin{align*}
\int_{\mathbb{R}}e^{i \xi_j x}{\rho}_{j,b}(dx)=\dfrac{\left(1-i\xi_j\right)^{-\alpha_j}}{\left(1-i\frac{\xi_j}{b}\right)^{-\alpha_j}}.
\end{align*}
Then, one can apply our methodology to these multivariate gamma distributions. Moreover, for all $\xi \in \mathbb{R}^d$ and all $t> 0$,
\begin{align*}
e^{-t\sum_{j=1}^d\alpha_j}\leq \left|\dfrac{\varphi(\xi)}{\varphi(e^{-t}\xi)}\right|\leq 1.
\end{align*}
Let us note that other types of self-decomposable multivariate gamma distributions have been considered in the literature (see \cite{PS14}). In particular, the authors of \cite{PS14} considered infinitely divisible multivariate gamma distributions whose L\'evy measures have the following polar decomposition,
\begin{align*}
\nu(du)=\bbone_{(0,+\infty)}(r)\bbone_{S^{d-1}}(x)\alpha\frac{\exp(-\beta r)}{r}dr\lambda(dx),
\end{align*}
where $\alpha, \beta$ are positive real numbers and $\lambda$ is a finite positive measure on the $d$-dimensional unit sphere. 
Therefore, $k_x(r)=\alpha e^{-\beta r}$, $r>0$ and so \eqref{cond:kx} is again satisfied. Moreover, such infinitely divisible multivariate gamma distributions are self-decomposable and admit absolute moment of any orders.\\
\\
To end this series of examples, let us mention one other way to build probability measures on $\mathbb{R}^d$ which are self-decomposable. One standard procedure is through mixtures. For example, thanks to \cite[Corollary p. 40]{T289}, the mixture of a $d$-multivariate normal distribution $\mathcal{N}(m, \Gamma I_d)$, $m\in \mathbb{R}^d$, and a generalized gamma convolution $\Gamma$ (see \cite{Bond92} for a definition) is self-decomposable.\\
\\
Let us next solve the integro-partial differential equation \eqref{eq:Stein} for any $h\in \mathcal{H}_2\cap \mathcal{C}^{\infty}_c(\mathbb{R}^d)$. Note that under the assumption of Proposition \ref{prop:SteinSol2}, it is possible to solve, \textit{mutatis mutandis}, the Stein equation \eqref{eq:Stein} for $h\in \mathcal{H}_1\cap \mathcal{C}^{\infty}_c(\mathbb{R}^d)$.

\begin{prop}\label{prop:SteinEq}
Let $X$ be a non-degenerate self-decomposable random vector in $\mathbb{R}^d$ without Gaussian component, with law $\mu_X$, with L\'evy measure $\nu$ and such that $\bbe \|X\|<\infty$, with moreover the functions $k_x$ given by \eqref{rep:sd} satisfying \eqref{cond:kx}. Let $h\in \mathcal{H}_2\cap \mathcal{C}^{\infty}_c(\mathbb{R}^d)$ and let $f_h$ be the function given by Proposition \ref{prop:SteinSol1}. Then, for all $x\in \mathbb{R}^d$
\begin{align*}
\langle \bbe X-x;\nabla (f_h)(x) \rangle+\int_{\mathbb{R}^d}\langle \nabla(f_h)(x+u)-\nabla(f_h)(x) ;u\rangle \nu(du)=h(x)-\bbe h(X).
\end{align*}
\end{prop}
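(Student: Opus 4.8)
The plan is to exploit the semigroup structure established in Lemma~\ref{lem:SG}, namely that $\mathcal{A}$ is the generator of $(P^\nu_t)_{t\geq 0}$ on $L^1(\mu_X)$, so that for functions in the domain one expects the identity $\mathcal{A}(f_h) = -\int_0^{+\infty} \frac{d}{dt}\big(P^\nu_t(h) - \bbe h(X)\big)\,dt = h - \bbe h(X)$, the integrand being a total derivative. First I would verify, using Proposition~\ref{prop:SteinSol1}, that $f_h$ is well-defined and $C^2$ with bounded first and second derivatives, so that every term in $\mathcal{A}(f_h)(x)$ makes sense: the drift term $\langle \bbe X - x; \nabla f_h(x)\rangle$ is controlled since $\|\nabla f_h\|_\infty \leq 1$, and for the nonlocal term one splits the integral over $\|u\|\leq 1$ and $\|u\|>1$, bounding $\|\nabla f_h(x+u)-\nabla f_h(x)\|\leq M_2(f_h)\|u\| \leq \tfrac12\|u\|$ on the first piece (so the integrand is $O(\|u\|^2)$, integrable against $\nu$) and $\|\nabla f_h(x+u)-\nabla f_h(x)\| \leq 2$ on the second (integrable against $\nu$ since $\nu(\|u\|>1)<\infty$).

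Next I would make the heuristic rigorous. The cleanest route is to write, for $0 < \varepsilon < T < \infty$,
\begin{align*}
P^\nu_\varepsilon(h)(x) - P^\nu_T(h)(x) = -\int_\varepsilon^T \frac{d}{dt} P^\nu_t(h)(x)\,dt = -\int_\varepsilon^T P^\nu_t\big(\mathcal{A}(h)\big)(x)\,dt = -\int_\varepsilon^T \mathcal{A}\big(P^\nu_t(h)\big)(x)\,dt,
\end{align*}
using that $h \in \mathcal{C}^\infty_c(\mathbb{R}^d) \subset \mathcal{S}(\mathbb{R}^d)$ is in the domain of $\mathcal{A}$ and that $P^\nu_t$ commutes with $\mathcal{A}$. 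Here one must justify differentiating under the integral sign in the Fourier representation \eqref{def:sg}, which follows from the domination estimates in Lemma~\ref{lem:MomBounds} of the Appendix together with the rapid decay of $\mathcal{F}(h)$. Letting $\varepsilon \to 0^+$ gives $P^\nu_\varepsilon(h)(x) \to h(x)$, and letting $T \to +\infty$ gives $P^\nu_T(h)(x) \to \bbe h(X)$ (both limits from Lemma~\ref{lem:SG}), so $h(x) - \bbe h(X) = -\int_0^{+\infty} \mathcal{A}(P^\nu_t(h))(x)\,dt$. It then remains to interchange $\mathcal{A}$ with the $t$-integral, i.e. to show $\int_0^{+\infty} \mathcal{A}(P^\nu_t(h))(x)\,dt = \mathcal{A}\big(\int_0^{+\infty}(P^\nu_t(h) - \bbe h(X))\,dt\big)(x) = -\mathcal{A}(f_h)(x)$. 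For the drift part this interchange is immediate from $\partial_j f_h(x) = -\int_0^{+\infty} e^{-t} P^\nu_t(\partial_j h)(x)\,dt$ (established in Proposition~\ref{prop:SteinSol1}) and the exponential decay. For the nonlocal part one uses the same decomposition $\|u\|\leq 1$ versus $\|u\|>1$ as above, now applied uniformly in $t$: on $\|u\|\leq 1$ invoke $\|\mathbf{D}^2 P^\nu_t(h)\|_{op} \leq e^{-2t}M_2(h) \leq \tfrac12 e^{-2t}$, and on $\|u\|>1$ use $\|\nabla P^\nu_t(h)(x+u) - \nabla P^\nu_t(h)(x)\| \leq 2 e^{-t} M_1(h)$, so Fubini applies against $dt\,\nu(du)$.

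The main obstacle I anticipate is the rigorous justification of $\frac{d}{dt}P^\nu_t(h)(x) = P^\nu_t(\mathcal{A}(h))(x)$ for all $t>0$ (not just the right-derivative at $t=0$ that Lemma~\ref{lem:SG} computes) together with the commutation $P^\nu_t \mathcal{A}(h) = \mathcal{A} P^\nu_t(h)$; these are standard $C_0$-semigroup facts once $h$ is known to lie in the domain of $\mathcal{A}$ and in the domain of all iterates, but here the domain is on $L^1(\mu_X)$ whereas the identity is claimed \emph{pointwise} for every $x \in \mathbb{R}^d$, so one cannot merely quote abstract theory and must instead work directly with the explicit Fourier representation \eqref{def:sg}, differentiating under the integral. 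Concretely, $\frac{d}{dt}\big(e^{i e^{-t}\langle x;\xi\rangle}\varphi(\xi)/\varphi(e^{-t}\xi)\big)$ produces a factor that grows polynomially in $\xi$ and linearly in $x$ (again controlled by Lemma~\ref{lem:MomBounds}), which is absorbed by $\mathcal{F}(h) \in \mathcal{S}(\mathbb{R}^d)$; carrying out this differentiation and matching the resulting $\xi$-multiplier with the Fourier symbol of $\mathcal{A}$ — exactly the computation $\sum_j \xi_j \partial_j(\varphi)(\xi)/\varphi(\xi) = i\langle\xi;\bbe X\rangle + i\int \langle\xi;u\rangle(e^{i\langle u;\xi\rangle}-1)\nu(du)$ already done in the proof of Lemma~\ref{lem:SG} — is the technical heart of the argument. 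Everything else is bookkeeping with the two integrability regimes for $\nu$.
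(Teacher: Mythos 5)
Your proposal follows essentially the same route as the paper's proof: differentiate the Fourier representation of $P^\nu_t(h)(x)$ in $t$, match the resulting $\xi$-multiplier to the symbol of $\mathcal{A}$ to get the pointwise identity $\frac{d}{dt}P^\nu_t(h)(x)=\mathcal{A}(P^\nu_t(h))(x)$, integrate in $t$, and then interchange $\mathcal{A}$ with the time integral using the exponential-in-$t$ bounds on $\nabla P^\nu_t(h)$ and the usual split of $\nu$ over $\|u\|\le 1$ and $\|u\|>1$. You correctly anticipate the two delicate points (pointwise rather than $L^1$ differentiation, and the Fubini-type interchange), and your domination estimates are exactly those the paper uses.
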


\begin{proof}
Let $h\in \mathcal{H}_2\cap \mathcal{C}^{\infty}_c(\mathbb{R}^d)$ and $f_h$ be given by \eqref{eq:solStein1}. Let $\hat{h}=h-\bbe h(X)$.\\
\textit{Step 1}:
Let us prove that for all $t>0$ and all $x\in \mathbb{R}^d$
\begin{align}\label{eq:heat}
\dfrac{d}{dt} \left(P_t^\nu(h)(x)\right)= \mathcal{A}(P_t^\nu(h))(x).
\end{align}
Since $h$ belongs to $\mathcal{C}^{\infty}_c(\mathbb{R}^d)$, by Fourier inversion, for all $t> 0$ and for all $x\in \mathbb{R}^d$
\begin{align*}
P_t^\nu(h)(x)=\int_{\mathbb{R}^d} \mathcal{F}(h)(\xi) e^{i e^{-t}\langle x;\xi \rangle}\dfrac{\varphi(\xi)}{\varphi(e^{-t}\xi)} \dfrac{d\xi}{(2\pi)^d},
\end{align*}
which will be used to compute $d/dt (P_t^\nu(h)(x))$. First, note that, for all $x\in \mathbb{R}^d$, $\xi \in \mathbb{R}^d$ and $t> 0$,
\begin{align*}
\dfrac{d}{dt}\left(e^{i e^{-t}\langle x;\xi \rangle}\dfrac{\varphi(\xi)}{\varphi(e^{-t}\xi)}\right)&=-ie^{-t}\langle x;\xi \rangle e^{i e^{-t}\langle x;\xi \rangle}\dfrac{\varphi(\xi)}{\varphi(e^{-t}\xi)}+e^{i e^{-t}\langle x;\xi \rangle}\varphi(\xi)e^{-t}\left(\sum_{j=1}^d \xi_j\dfrac{\partial_j(\varphi)(e^{-t}\xi)}{\varphi(\xi e^{-t})^2}\right)\\
&=e^{i e^{-t}\langle x;\xi \rangle}\dfrac{\varphi(\xi)}{\varphi(e^{-t}\xi)}\left(-ie^{-t}\langle x;\xi \rangle+e^{-t}\sum_{j=1}^d \xi_j\dfrac{\partial_j(\varphi)(e^{-t}\xi)}{\varphi(\xi e^{-t})}\right)\\
&=e^{i e^{-t}\langle x;\xi \rangle}\dfrac{\varphi(\xi)}{\varphi(e^{-t}\xi)}ie^{-t}\left(\langle \bbe X-x;\xi \rangle+\int_{\mathbb{R}^d}\langle u;\xi \rangle(e^{i\langle u; e^{-t}\xi \rangle}-1)\nu(du)\right).
\end{align*}
Moreover, for all $x\in \mathbb{R}^d$, for all $\xi \in \mathbb{R}^d$ and all $t> 0$,
\begin{align*}
\left| \dfrac{d}{dt}\left(e^{i e^{-t}\langle x;\xi \rangle}\dfrac{\varphi(\xi)}{\varphi(e^{-t}\xi)}\right) \right| &\leq e^{-t} \left( \| \bbe X-x\| \|\xi\|+\|\xi\|^2 e^{-t}\int_{\|u\|\leq 1}\|u\|^2 \nu(du)+2\|\xi\| \int_{\|u\|\geq 1}\|u\|\nu(du)\right)\\
&\leq \left( \| \bbe X-x\| \|\xi\|+\|\xi\|^2 \int_{\mathbb{R}^d}\|u\|^2 \nu(du)+2\|\xi\| \int_{\|u\|\geq 1}\|u\|\nu(du)\right),
\end{align*}
hence, 
\begin{align*}
\dfrac{d}{dt} (P_t^\nu(h)(x))=\int_{\mathbb{R}^d} \mathcal{F}(h)(\xi)e^{i e^{-t}\langle x;\xi \rangle}\dfrac{\varphi(\xi)}{\varphi(e^{-t}\xi)}ie^{-t}\left(\langle \bbe X-x;\xi \rangle+\int_{\mathbb{R}^d}\langle u;\xi \rangle(e^{i\langle u; e^{-t}\xi \rangle}-1)\nu(du)\right)\frac{d\xi}{(2\pi)^d}.
\end{align*}
To conclude the first step, let us precisely compute the right-hand side of the previous equality. First,
\begin{align*}
\int_{\mathbb{R}^d} \mathcal{F}(h)(\xi)e^{i e^{-t}\langle x;\xi \rangle}\dfrac{\varphi(\xi)}{\varphi(e^{-t}\xi)}ie^{-t}\langle \bbe X-x;\xi \rangle \frac{d\xi}{(2\pi)^d} &=\langle \bbe X-x;e^{-t}P^\nu_t(\nabla (h))(x)\rangle\\
&=\langle \bbe X-x;\nabla (P^\nu_t(h))(x)\rangle.
\end{align*}
where we have used that $e^{-t}P^\nu_t(\partial_j (h))(x)=\partial_j (P^\nu_t(h))(x)$, for all $t\geq0$, all $x\in \mathbb{R}^d$ and all $1\leq j\leq d$. Moreover, by Fubini Theorem and Fourier arguments,
\begin{align*}
(I)&:=\int_{\mathbb{R}^d} \mathcal{F}(h)(\xi)e^{i e^{-t}\langle x;\xi \rangle}\dfrac{\varphi(\xi)}{\varphi(e^{-t}\xi)}ie^{-t}\left(\int_{\mathbb{R}^d}\langle u;\xi \rangle(e^{i\langle u; e^{-t}\xi \rangle}-1)\nu(du)\right)\frac{d\xi}{(2\pi)^d}\\
&=\sum_{j=1}^d \int_{\mathbb{R}^d} \bigg(\int_{\mathbb{R}^d}u_j \bigg(\mathcal{F}(\partial_j(h)(.+e^{-t}u))(\xi)-\mathcal{F}(\partial_j(h))(\xi)\bigg)\nu(du)\bigg)e^{i e^{-t}\langle x;\xi \rangle}\dfrac{\varphi(\xi)}{\varphi(e^{-t}\xi)}e^{-t}\frac{d\xi}{(2\pi)^d}\\
&=\sum_{j=1}^d \int_{\mathbb{R}^d} u_j e^{-t}\left(P^\nu_t(\partial_j(h))(x+u)-P^\nu_t(\partial_j(h))(x)\right)\nu(du)\\
&=\int_{\mathbb{R}^d} \langle u; \nabla(P^\nu_t(h))(x+u)-\nabla(P^\nu_t(h))(x) \rangle \nu(du).
\end{align*}
Thus, for all $x\in \mathbb{R}^d$ and all $t>0$, 
\begin{align*}
\dfrac{d}{dt}\left(P^\nu_t(h))\right)(x)=\mathcal{A}(P_t^\nu(h))(x),
\end{align*}
which gives \eqref{eq:heat} and finishes the proof of the first step.\\
\textit{Step 2}: Let $0<b<+\infty$. Integrating out the equality \eqref{eq:heat} gives,
\begin{align*}
P_b^\nu(h)(x)-h(x)=\int_{0}^b \mathcal{A}(P_t^\nu(h))(x) dt,
\end{align*}
then, letting $b\rightarrow +\infty$ and using Lemma \ref{lem:SG} lead to:
\begin{align*}
\lim_{b\rightarrow+\infty}\left(P_b^\nu(h)(x)-h(x)\right)=-\hat{h}(x),\quad\quad x\in \mathbb{R}^d.
\end{align*}
Next, let us show that $\int_{0}^{+\infty} \left|\mathcal{A}(P_t^\nu(h))(x)\right| dt<+\infty$, for all $x\in \mathbb{R}^d$. To do so, we need to estimate the quantities $\|\nabla (P^\nu_t(h))(x)\|$ and $\|\nabla(P_t^\nu(h))(x+u)-\nabla(P_t^\nu(h))(x)\|$, for all $x\in \mathbb{R}^d$, all $u\in \mathbb{R}^d$ and all $t\geq 0$. Using that $\partial_j\left(P^\nu_t(h)\right)(x)=e^{-t}P^\nu_t(\partial_j(h))(x)$ and that $h\in \mathcal{H}_2$,
\begin{align*}
\|\nabla (P^\nu_t(h))(x)\|\leq \sqrt{d}e^{-t}, \quad \|\nabla(P_t^\nu(h))(x+u)-\nabla(P_t^\nu(h))(x)\| \leq \sqrt{d} e^{-t} \bbone_{\|u\|\geq 1}+\sqrt{d}e^{-t} \|u\| \bbone_{\|u\|\leq 1}.
\end{align*}
Then, by the very definitions of $\cal A$ and $P^\nu_t$ and standard inequalities, for all $x\in \mathbb{R}^d$ and all $t\geq 0$
\begin{align*}
\left|\mathcal{A}(P_t^\nu(h))(x)\right| \leq \sqrt{d}e^{-t}\left(\|\bbe X-x\|+ \left(\int_{\|u\|\leq 1}\|u\|^2\nu(du)+ \int_{\|u\|\geq 1}\|u\|\nu(du)\right)\right),
\end{align*}
which implies that $\int_{0}^{+\infty} \left|\mathcal{A}(P_t^\nu(h))(x)\right| dt<+\infty$, for all $x\in \mathbb{R}^d$. Moreover, $\mathcal{A}(P_t^\nu(h))(x)=\mathcal{A}(P_t^\nu(\hat{h}))(x)$, thus, for all $x\in \mathbb{R}^d$,
\begin{align*}
-\hat{h}(x)=\int_0^{+\infty}\mathcal{A}(P_t^\nu(\hat{h}))(x) dt.
\end{align*}
To conclude, one needs to prove that, for all $x\in \mathbb{R}^d$
\begin{align*}
\int_0^{+\infty}\mathcal{A}(P_t^\nu(\hat{h}))(x) dt=\mathcal{A} \left(\int_0^{+\infty}P_t^\nu(\hat{h})(x)dt\right)=-\mathcal{A}(f_h)(x),
\end{align*}
but this follows from standard arguments as well as from Proposition \ref{prop:SteinSol1}.
\end{proof}
\noindent
We end this section with regularity estimates for the solution $f_h$ of the Stein's equation under the assumptions of Proposition \ref{prop:SteinSol1}. Similar estimates hold true under the assumptions of Proposition \ref{prop:SteinSol2}. In particular, under these latter assumptions, it is sufficient to have $h\in \mathcal{H}_1$ to obtain a bound on $M_2(f_h)$, and this is in line with the Gaussian case (see \cite{Rai04,ChM08}).

\begin{prop}\label{EstM}
Let $X$ be a non-degenerate self-decomposable random vector in $\mathbb{R}^d$ without Gaussian component, with law $\mu_X$, characteristic function $\varphi$ and such that $\bbe \|X\|<\infty$, with moreover the functions $k_x$ given by \eqref{rep:sd} satisfying \eqref{cond:kx}.
Let $h\in\mathcal{H}_2$ and $(P^\nu_t)_{t\geq 0}$ be the semigroup of operators as in Lemma \ref{lem:SG}. Then, $f_h$ given, for all $x\in \mathbb{R}^d$, by
\begin{align*}
f_h(x)=-\int_{0}^{+\infty} (P^\nu_t(h)(x)-\bbe h(X))dt,
\end{align*}
is such that
\begin{align*}
M_1(f_h)\leq \sqrt{d},\quad\quad M_2(f_h)\leq \frac{d}{2}.
\end{align*}
\end{prop}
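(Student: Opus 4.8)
The plan is to derive the bounds on $M_1(f_h)$ and $M_2(f_h)$ directly from the pointwise derivative formulas already established in the proof of Proposition \ref{prop:SteinSol1}, by passing from the coordinatewise $\ell^\infty$-type estimates $\|D^\alpha(f_h)\|_\infty \leq 1$ (resp. $\leq 1/2$) to the operator-norm estimates on the full first and second derivatives viewed as multilinear forms. The key observation is that, from Proposition \ref{prop:SteinSol1}, for every $x$ and every $1\leq j\leq d$ we have the representation $\partial_j(f_h)(x) = -\int_0^{+\infty} e^{-t} P^\nu_t(\partial_j(h))(x)\, dt$, and for every $1\leq i,j\leq d$, $\partial^2_{ij}(f_h)(x) = -\int_0^{+\infty} e^{-2t} P^\nu_t(\partial^2_{ij}(h))(x)\, dt$.

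First I would handle $M_1(f_h)$. Since $\mathbf{D}^1(f_h)(x)$ acting on a unit vector $v = (v_1,\dots,v_d)$ equals $\langle \nabla(f_h)(x); v\rangle = -\int_0^{+\infty} e^{-t} \langle \nabla(P^\nu_t(h))(x)\cdot e^t ; v\rangle\, dt$; more cleanly, using $\nabla(P^\nu_t(h))(x) = e^{-t} P^\nu_t(\nabla h)(x)$ and $P^\nu_t(g)(x) = \int g(u + e^{-t}x)\mu_t(du)$, one gets $\langle \nabla(f_h)(x); v\rangle = -\int_0^{+\infty} e^{-t}\int_{\mathbb{R}^d} \langle \nabla(h)(u+e^{-t}x); v\rangle\, \mu_t(du)\, dt$. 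Now $|\langle \nabla(h)(y); v\rangle| = |\mathbf{D}^1(h)(y)(v)| \leq M_1(h)\|v\| \leq 1$ since $h\in\mathcal{H}_2$. Hence $|\mathbf{D}^1(f_h)(x)(v)| \leq \int_0^{+\infty} e^{-t}\, dt = 1$, which would already give $M_1(f_h)\leq 1 \leq \sqrt{d}$. (In fact, the $\sqrt d$ in the statement is just the crude bound one obtains from $\|\nabla g\| \leq \sqrt{d}\max_j|\partial_j g|$; either route is fine, and the sharper $M_1(f_h)\le 1$ is consistent with it.) For $M_2(f_h)$, I would argue identically: $\mathbf{D}^2(f_h)(x)(v,w) = \sum_{i,j} v_i w_j \partial^2_{ij}(f_h)(x) = -\int_0^{+\infty} e^{-2t}\int_{\mathbb{R}^d} \mathbf{D}^2(h)(u+e^{-t}x)(v,w)\,\mu_t(du)\, dt$, using $\partial^2_{ij}(P^\nu_t(h)) = e^{-2t} P^\nu_t(\partial^2_{ij} h)$. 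Then $|\mathbf{D}^2(h)(y)(v,w)| \leq M_2(h)\|v\|\|w\| \leq 1$, so $|\mathbf{D}^2(f_h)(x)(v,w)| \leq \int_0^{+\infty} e^{-2t}\, dt = 1/2 \leq d/2$, giving $M_2(f_h)\leq d/2$. The identification of $M_2$ with the Lipschitz constant of $\mathbf{D}^1(f_h)$ follows from the displayed equivalent characterizations of $M_\ell$ in Section \ref{sec:not}.

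The only genuinely non-routine point is justifying the interchange of the (double) integral over $[0,+\infty)$ against Lebesgue measure, the integral over $\mu_t$, and differentiation under the integral sign — i.e., that the formulas $\partial_j(f_h) = -\int_0^{+\infty} e^{-t} P^\nu_t(\partial_j h)\, dt$ and $\partial^2_{ij}(f_h) = -\int_0^{+\infty} e^{-2t} P^\nu_t(\partial^2_{ij} h)\, dt$ are legitimate. But this is exactly what was already proved inside Proposition \ref{prop:SteinSol1} (using the exponential-in-$t$ decay estimate $|P^\nu_t(h)(x) - \bbe h(X)| \leq \|x\|e^{-t} + C_d e^{-t/(2^{d+1}(d+1))}$ from Theorem \ref{thm:smoothing}, together with uniform boundedness of the derivatives of $h$), so I would simply invoke it. Thus the proof reduces to the two one-line estimates above plus a reference back to Proposition \ref{prop:SteinSol1} and to the definition of $M_\ell$.

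\begin{proof}
Let $h\in\mathcal{H}_2$. By Proposition \ref{prop:SteinSol1}, $f_h$ is well defined and twice continuously differentiable on $\mathbb{R}^d$, with, for all $x\in\mathbb{R}^d$ and all $1\leq i,j\leq d$,
\begin{align*}
\partial_j(f_h)(x)=-\int_0^{+\infty} e^{-t}P^\nu_t(\partial_j(h))(x)\, dt,\qquad \partial^2_{ij}(f_h)(x)=-\int_0^{+\infty} e^{-2t}P^\nu_t(\partial^2_{ij}(h))(x)\, dt.
\end{align*}
Fix $x\in\mathbb{R}^d$ and unit vectors $v,w\in\mathbb{R}^d$. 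Using \eqref{eq:2.2},
\begin{align*}
\mathbf{D}^1(f_h)(x)(v)=\sum_{j=1}^d v_j\partial_j(f_h)(x)=-\int_0^{+\infty} e^{-t}\left(\int_{\mathbb{R}^d}\mathbf{D}^1(h)(u+e^{-t}x)(v)\,\mu_t(du)\right)dt.
\end{align*}
Since $h\in\mathcal{H}_2$ gives $M_1(h)\leq 1$, we have $|\mathbf{D}^1(h)(y)(v)|\leq M_1(h)\|v\|\leq 1$ for all $y$, hence
\begin{align*}
\left|\mathbf{D}^1(f_h)(x)(v)\right|\leq\int_0^{+\infty} e^{-t}\, dt=1.
\end{align*}
Taking the supremum over unit $v$ and over $x$, and recalling $\|\nabla g(x)\|\leq\sqrt{d}\max_{1\leq j\leq d}|\partial_j g(x)|$, we obtain $M_1(f_h)\leq\sqrt{d}$.

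Similarly,
\begin{align*}
\mathbf{D}^2(f_h)(x)(v,w)=\sum_{i,j=1}^d v_i w_j\partial^2_{ij}(f_h)(x)=-\int_0^{+\infty} e^{-2t}\left(\int_{\mathbb{R}^d}\mathbf{D}^2(h)(u+e^{-t}x)(v,w)\,\mu_t(du)\right)dt,
\end{align*}
where we used $\partial^2_{ij}(P^\nu_t(h))(x)=e^{-2t}P^\nu_t(\partial^2_{ij}(h))(x)$ together with \eqref{eq:2.2}. Since $h\in\mathcal{H}_2$ gives $M_2(h)\leq 1$, we have $|\mathbf{D}^2(h)(y)(v,w)|\leq M_2(h)\|v\|\|w\|\leq 1$ for all $y$, hence
\begin{align*}
\left|\mathbf{D}^2(f_h)(x)(v,w)\right|\leq\int_0^{+\infty} e^{-2t}\, dt=\frac{1}{2}.
\end{align*}
Therefore $\sup_{x\in\mathbb{R}^d}\|\mathbf{D}^2(f_h)(x)\|_{op}\leq 1/2$; bounding the operator norm of the Hessian by $\sqrt{d}$ times the maximal entry in absolute value (in each direction), one gets $M_2(f_h)\leq d/2$. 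This concludes the proof.
\end{proof}
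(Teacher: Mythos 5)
Your proof is correct, and in fact it establishes the dimension-free bounds $M_1(f_h)\leq 1$ and $M_2(f_h)\leq 1/2$, which are strictly sharper than the $\sqrt{d}$ and $d/2$ in the statement. The mechanism is the same as the paper's — differentiate under the $t$-integral using the commutation relation $\partial_j(P^\nu_t(h))=e^{-t}P^\nu_t(\partial_j h)$ (legitimized by Proposition \ref{prop:SteinSol1}) and then integrate $e^{-t}$ resp.\ $e^{-2t}$ — but you stay in the multilinear-form picture throughout, bounding $|\mathbf{D}^\ell(h)(y)(v_1,\dots,v_\ell)|\le M_\ell(h)\le 1$ directly, whereas the paper works coordinate-wise, first proving $|\partial_i(f_h)|\le 1$ and $|\partial^2_{ij}(f_h)|\le 1/2$ and then incurring the dimensional factors $\sqrt d$ (via $\|\nabla f_h\|\le\sqrt d\max_i|\partial_i f_h|$) and $d$ (via the entrywise bound on a $d\times d$ Hessian). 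Your route is the cleaner of the two. The one infelicity is that, having already obtained $\sup_{x,\|v\|=1}|\mathbf D^1(f_h)(x)(v)|\le 1$, which \emph{is} $M_1(f_h)\le 1$, you then append ``recalling $\|\nabla g\|\le\sqrt d\max_j|\partial_j g|$'' to arrive at $\sqrt d$ — that step is a non sequitur (it is what you would do if you had only the coordinatewise bounds, not the operator-norm bound you actually derived); the same remark applies to the $d/2$ weakening of $M_2(f_h)\le1/2$. These sentences do not make anything wrong, since $1\le\sqrt d$ and $1/2\le d/2$, but they should either be deleted or replaced by a bare ``hence $M_1(f_h)\le1\le\sqrt d$.''
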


\begin{proof}
By definition,
\begin{align*}
M_1(f_h)=\sup_{x\in \mathbb{R}^d} \|\nabla(f_h)(x)\|_{op}.
\end{align*}
Let $u\in \mathbb{R}^d$ with $\|u\|=1$. Then, by the Cauchy-Schwarz inequality, for all $x\in \mathbb{R}^d$
\begin{align*}
|\langle\nabla(f_h)(x);u \rangle| \leq \|\nabla(f_h)(x)\|.
\end{align*}
Now, thanks to the commutation relation $\partial_i\left(P^\nu_t(h)\right)(x)=e^{-t}P^\nu_t(\partial_i(h))(x)$ and since $h\in \mathcal{H}_2$, for all $1\leq i\leq d$ and all $x\in \mathbb{R}^d$
\begin{align*}
|\partial_i(f_h)(x)| \leq 1,
\end{align*}
implying that $M_1(f_h)\leq \sqrt{d}$. The bound for $M_2(f_h)$ follows similarly using the commutation relation twice and the fact that $h\in \mathcal{H}_2$.
\end{proof}


\section{Stein Kernels for SD Laws With Finite Second Moment}\label{sec:SK}
In the Gaussian setting, a major finding in the context of Stein's method is the introduction of the notion of Stein kernel (see e.g. \cite{Stein2,CPU94,Cha08,Cha09,NP1,Cha12,NP3,NPS14,LNP15,CFP18}). Recall that $\gamma$, a centered Gaussian measure on $\mathbb{R}^d$, satisfies the following integration by parts formula,
\begin{align*}
\int_{\mathbb{R}^d} \langle x ; f(x)\rangle \gamma(dx) =\int_{\mathbb{R}^d} \operatorname{div} (f(x))\gamma(dx),
\end{align*}
for all smooth enough, $\mathbb{R}^d$-valued function $f=(f_1,...,f_d)$ and where $\operatorname{div}(f(x))=\sum_{j=1}^d \partial_j(f_j)(x)$. For a centered probability measure $\rho$ on $\mathbb{R}^d$, the Gaussian Stein kernel of $\rho$ is the measurable function $\tau_\rho$, from $\mathbb{R}^d$ to $\mathcal{M}_{d\times d}(\mathbb{R})$, the space of $d\times d$ real matrices, such that, for all smooth enough $\mathbb{R}^d$-valued function $f$,
\begin{align*}
\int_{\mathbb{R}^d} \langle x ; f(x)\rangle \rho(dx) =\int_{\mathbb{R}^d} \langle \tau_\rho(x); \nabla(f)(x) \rangle_{HS} \rho(dx),
\end{align*}
where $\langle A; B\rangle_{HS}=\operatorname{Tr}\left(A^t B\right)$, for $A,B\in \mathcal{M}_{d\times d}(\mathbb{R})$. Recall, also from the previous section, that a non-degenerate self-decomposable random vector $X$ without Gaussian component, with law $\mu_X$ and with finite first moment satisfies, for $f$ smooth enough, the following characterizing equation,
\begin{align*}
\int_{\mathbb{R}^d}\langle x-\bbe X; \nabla(f)(x) \rangle \mu_X(dx)= \int_{\mathbb{R}^d} \left(\int_{\mathbb{R}^d} \langle \nabla(f)(x+u)-\nabla(f)(x);u \rangle \nu(du)\right) \mu_X(dx).
\end{align*}
Then, quite naturally, in the infinitely divisible framework, let us introduce the following definitions of Stein's kernels and of the Stein's discrepancy.

\begin{defi}\label{defSteinkernel2}
Let $X$ be a centered non-degenerate infinitely divisible random vector without Gaussian component, with law $\mu_X$, with L\'evy measure $\nu$ and with $\bbe \|X\|^2<\infty$. Let $Y$ be a centered random vector with law $\mu_Y$ and with $\bbe \|Y\|^2<\infty$.
Then, a Stein kernel of $Y$ with respect to $X$ is a measurable function $\tau_Y$ from $\mathbb{R}^d$ to $\mathbb{R}^d$ such that,
\begin{align*}
\int_{\mathbb{R}^d}\langle y; f(y) \rangle \mu_Y(dy)=\int_{\mathbb{R}^d} \left(\int_{\mathbb{R}^d} \langle f(y+u)-f(y);\tau_Y(y+u)-\tau_Y(y) \rangle \nu(du)\right) \mu_Y(dy),
\end{align*}
for all $\mathbb{R}^d$-valued test function $f$ for which both sides of the previous equality are well defined. Moreover, the Stein's discrepancy of $\mu_Y$ with respect to $\mu_X$ is given by
\begin{align*}
S\left(\mu_Y || \mu_X\right)= \inf\left( \int_{\mathbb{R}^d}\int_{\mathbb{R}^d} \|\tau_Y(y+u)-\tau_Y(y)-u\|^2\nu(du)\mu_Y(dy)\right)^{1/2},
\end{align*}
where the infimum is taken over all Stein kernels of $Y$ with respect to $X$, and is equal to $+\infty$ if no such Stein kernel exists.
\end{defi}
\noindent
The next result ensures that the Stein's discrepancy provides a good control on classical metrics between probability measures on $\mathbb{R}^d$.

\begin{thm}\label{thm:SteinKernel}
Let $X$ be a centered non-degenerate self-decomposable random vector without Gaussian component, with law $\mu_X$, L\'evy measure $\nu$, such that $\bbe \|X\|^2<+\infty$ and let also the functions $k_x$ given by \eqref{rep:sd} satisfy \eqref{cond:kx}. Let $Y$ be a centered non-degenerate random vector with law $\mu_Y$, such that $\bbe \|Y\|^2<+\infty$, and for which a Stein's kernel with respect to $X$ exists. Then, 
\begin{align*}
d_{W_2}(\mu_X,\mu_Y) \leq \frac{d}{2} \left(\int_{\mathbb{R}^d} \|u\|^2 \nu(du)\right)^{1/2}S\left(\mu_Y || \mu_X\right).
\end{align*}
\end{thm}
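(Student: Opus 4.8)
The plan is to use the Stein equation from Proposition~\ref{prop:SteinEq} together with the regularity estimates for its solution from Proposition~\ref{EstM} and the definition of the Stein kernel from Definition~\ref{defSteinkernel2}. First I would fix $h \in \mathcal{H}_2 \cap \mathcal{C}^\infty_c(\mathbb{R}^d)$ and let $f_h$ be the solution to \eqref{eq:Stein} furnished by Proposition~\ref{prop:SteinSol1}. By the representation of $d_{W_2}$ as a supremum over such $h$ (using Lemma~\ref{lem:repsmooth0}), it suffices to bound $|\bbe h(X) - \bbe h(Y)|$ uniformly in $h$. Evaluating the Stein equation at a random vector $Y \sim \mu_Y$ and taking expectations,
\begin{align*}
\bbe h(Y) - \bbe h(X) = \bbe \left[ \langle \bbe X - Y; \nabla(f_h)(Y) \rangle + \int_{\mathbb{R}^d} \langle \nabla(f_h)(Y+u) - \nabla(f_h)(Y); u \rangle \nu(du) \right].
\end{align*}
Since $X$ is centered, $\bbe X = 0$, so the first term is $-\bbe \langle Y; \nabla(f_h)(Y) \rangle$.

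\textbf{Main step.} Now I would apply the defining identity of the Stein kernel $\tau_Y$ of $Y$ with respect to $X$ with the test function $f = \nabla(f_h)$ (which is legitimate since $f_h$ is $C^2$ with bounded derivatives by Proposition~\ref{EstM}, and $Y$ has finite second moment, so both sides are well defined). This gives
\begin{align*}
\bbe \langle Y; \nabla(f_h)(Y) \rangle = \bbe \int_{\mathbb{R}^d} \langle \nabla(f_h)(Y+u) - \nabla(f_h)(Y); \tau_Y(Y+u) - \tau_Y(Y) \rangle \nu(du).
\end{align*}
Substituting back, the two terms combine into a single integral against $\nu$:
\begin{align*}
\bbe h(Y) - \bbe h(X) = \bbe \int_{\mathbb{R}^d} \langle \nabla(f_h)(Y+u) - \nabla(f_h)(Y); u - \big(\tau_Y(Y+u) - \tau_Y(Y)\big) \rangle \nu(du).
\end{align*}

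\textbf{Estimation.} From here the bound is obtained by Cauchy--Schwarz. First, since $M_2(f_h) \leq d/2$ by Proposition~\ref{EstM}, the mean value inequality applied to the $\mathbb{R}^d$-valued map $\nabla(f_h)$ gives $\|\nabla(f_h)(Y+u) - \nabla(f_h)(Y)\| \leq M_2(f_h)\|u\| \leq \tfrac{d}{2}\|u\|$ (here $M_2$ controls the operator norm of the Hessian, hence the Lipschitz constant of the gradient). Then, pointwise in $\omega$ and $u$,
\begin{align*}
\big| \langle \nabla(f_h)(Y+u) - \nabla(f_h)(Y); u - (\tau_Y(Y+u) - \tau_Y(Y)) \rangle \big| \leq \frac{d}{2}\|u\| \, \|\tau_Y(Y+u) - \tau_Y(Y) - u\|.
\end{align*}
Integrating against $\nu(du)\,\mu_Y(dy)$ and applying the Cauchy--Schwarz inequality with respect to the (finite, since $\int \|u\|^2 \nu(du) < \infty$) measure $\nu(du)\mu_Y(dy)$ yields
\begin{align*}
|\bbe h(Y) - \bbe h(X)| \leq \frac{d}{2} \left( \int_{\mathbb{R}^d} \|u\|^2 \nu(du) \right)^{1/2} \left( \int_{\mathbb{R}^d}\int_{\mathbb{R}^d} \|\tau_Y(y+u) - \tau_Y(y) - u\|^2 \nu(du)\mu_Y(dy) \right)^{1/2}.
\end{align*}
Taking the supremum over $h \in \mathcal{H}_2 \cap \mathcal{C}^\infty_c(\mathbb{R}^d)$ on the left and the infimum over all Stein kernels of $Y$ with respect to $X$ on the right gives exactly the claimed inequality. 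The main point requiring care is the justification that $f = \nabla(f_h)$ is an admissible test function in the Stein kernel identity — i.e. that both sides of that identity are finite — which follows from the uniform bounds $M_1(f_h) \leq \sqrt{d}$, $M_2(f_h) \leq d/2$ together with $\bbe\|Y\|^2 < \infty$ and $\int(1 \wedge \|u\|^2)\nu(du) < \infty$ (indeed $\int \|u\|^2\nu(du) < \infty$ is assumed here since $X$ has finite second moment); everything else is a direct application of Cauchy--Schwarz.
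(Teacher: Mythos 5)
Your proof is correct and follows essentially the same route as the paper's: plug $Y$ into the Stein equation, use the defining identity of the Stein kernel with test function $\nabla(f_h)$, combine into a single $\nu$-integral, and apply Cauchy--Schwarz together with the bound $M_2(f_h)\le d/2$ from Proposition~\ref{EstM}. The only (harmless) imprecision is your parenthetical claim that $\nu(du)\mu_Y(dy)$ is a finite measure — $\int\|u\|^2\nu(du)<\infty$ does not force $\nu$ to be finite — but Cauchy--Schwarz needs no such finiteness and the computation goes through unchanged.
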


\begin{proof}
Let $h\in \mathcal{H}_2\cap C_c^\infty(\mathbb{R}^d)$. By Proposition \ref{prop:SteinEq}, $f_h$ is a solution to,
\begin{align*}
-\langle x;\nabla (f_h)(x) \rangle+\int_{\mathbb{R}^d}\langle \nabla(f_h)(x+u)-\nabla(f_h)(x) ;u\rangle \nu(du)=h(x)-\bbe h(X), \quad\quad x\in \mathbb{R}^d,
\end{align*}
and thus,
\begin{align*}
\bbe \left(-\langle Y;\nabla (f_h)(Y) \rangle+\int_{\mathbb{R}^d}\langle \nabla(f_h)(Y+u)-\nabla(f_h)(Y) ;u\rangle \nu(du)\right)=\bbe h(Y)-\bbe h(X).
\end{align*}
Now, since $Y$ admits a Stein kernel with respect to $X$,
\begin{align*}
\bbe h(Y)-\bbe h(X)=\bbe \left(\int_{\mathbb{R}^d}\langle \nabla(f_h)(Y+u)-\nabla(f_h)(Y) ; u -\tau_{Y}(Y+u)+\tau_Y(Y) \rangle \nu(du)\right).
\end{align*}
Taking the absolute values and applying the Cauchy-Schwarz inequality,
\begin{align*}
\left| \bbe h(Y)-\bbe h(X)\right| \leq \bbe \int_{\mathbb{R}^d} \|\nabla(f_h)(Y+u)-\nabla(f_h)(Y)\| \|\tau_{Y}(Y+u)-\tau_Y(Y)-u\| \nu(du).                                     
\end{align*}
Now, by the very definition of $M_2(f_h)$ and by the Cauchy-Schwarz inequality (applied twice), the following bound holds true
\begin{align*}
\left| \bbe h(Y)-\bbe h(X)\right| \leq M_2(f_h)  \left(\int_{\mathbb{R}^d}\|u\|^2\nu(du)\right)^{1/2} \left(\bbe \int_{\mathbb{R}^d} \|\tau_{Y}(Y+u)-\tau_Y(Y)-u\|^2\nu(du)\right)^{1/2}.
\end{align*}
To conclude use the definition of the Stein's discrepancy and Proposition \ref{EstM}.
\end{proof}
\noindent
In the sequel, we wish to discuss sufficient conditions for the existence of Stein kernels as defined above. For this purpose,  let us recall some definition and results from \cite{Chen,ChLo87} regarding Poincar\'e inequalities in an infinitely divisible setting. First, if $X$ is a non-degenerate infinitely divisible random vector in $\mathbb{R}^d$ without Gaussian component, with law $\mu_X$ and with L\'evy measure $\nu$ and if $f: \mathbb{R}^d \longrightarrow \mathbb{R}$ is such that $\bbe f(X)^2+\bbe \int_{\mathbb{R}^d} |f(X+u)-f(X)|^2 \nu(du)<+\infty$, then \cite[Theorem $4.1$]{Chen} gives
\begin{align}\label{Poinc:ID}
\operatorname{Var}(f(X)) \leq \bbe \int_{\mathbb{R}^d} |f(X+u)-f(X)|^2 \nu(du).
\end{align}
Further, if $Y$ is a centered non-degenerate random vector in $\mathbb{R}^d$ such that $\bbe \|Y\|^2<+\infty$, if $\nu$ is a L\'evy measure in $\mathbb{R}^d$ such that $\int_{\mathbb{R}^d}\|u\|^2\nu(du)<+\infty$ and $\mathcal{H}_Y$ is the space of real valued functions $f$ on $\mathbb{R}^d$ such that $\bbe f(Y)^2<+\infty$ and $0<\bbe \int_{\mathbb{R}^d} |f(Y+u)-f(Y)|^2 \nu(du)<+\infty$, then the Poincar\'e constant $U(Y,\nu)$ defined as
\begin{align}\label{eq:UCh}
U(Y,\nu)=\sup_{f\in \mathcal{H}_Y} \dfrac{\operatorname{Var}(f(Y))}{\bbe \int_{\mathbb{R}^d} |f(Y+u)-f(Y)|^2 \nu(du)}
\end{align}
characterizes the proximity in law of $Y$ to a centered infinitely divisible random vector with finite second moment and L\'evy measure $\nu$. Indeed, \cite[Theorem $2.1$]{ChLo87} gives the following: $U(Y,\nu)\geq 1$ and $U(Y,\nu)=1$ if and only if the characteristic function of $Y$ is given by
\begin{align*}
\varphi_Y(\xi)=\exp\left(\int_{\mathbb{R}^d} \left(e^{i \langle\xi;u\rangle}-1-i \langle\xi;u\rangle\right)\nu(du)\right),\quad \xi\in\mathbb{R}^d,
\end{align*}
i.e., $Y\sim ID(b,0,\nu)$ with $b=-\int_{\|u\|\geq 1}u\nu(du)$.

In the Gaussian case, the existence of a Stein kernel for multivariate distributions has been investigated with the help of variational methods. Indeed, in \cite{CFP18}, under a spectral gap assumption, the existence of a Gaussian Stein kernel has been ensured thanks to the classical Lax-Milgram Theorem. Then, in view of \eqref{eq:UCh} and the associated characterization, it is natural to introduce the following variational setting: let $Y$ be a centered non-degenerate random vector with finite second moment and with law $\mu_Y$ and let $\nu$ be a L\'evy measure on $\mathbb{R}^d$ such that $\int_{\|u\|\geq 1}\|u\|^2 \nu(du)<+\infty$. Moreover, assume that $\nu\ast \mu_Y<< \mu_Y$, with $\nu \ast \mu_Y$ denoting the convolution of the two positive measures $\nu$ and $\mu_Y$. Now, let $H_{\nu}(\mu_Y)$ be the vector space of Borel measurable $\mathbb{R}^d$-valued functions on $\mathbb{R}^d$ such that $\int_{\mathbb{R}^d} \|f(y)\|^2 \mu_Y(dy)<+\infty$ and $\int_{\mathbb{R}^d\times \mathbb{R}^d} \|f(y+u)-f(y)\|^2 \nu(du)\mu_Y(dy)<+\infty$ and let $H_{\nu,0}(\mu_Y)$ be the subspace of $H_{\nu}(\mu_Y)$ such that $\bbe f(Y)=0$. (Two functions $f$ and $g$ of $H_{\nu}(\mu_Y)$ are identified as soon as $f=g$ $\mu_Y$-almost everywhere.)
Then, let us assume that $Y$ satisfies a Poincar\'e inequality of the following type: there exists a positive and finite constant $U_Y$ such that, for all $f\in H_{\nu}(\mu_Y)$
\begin{align}\label{ineq:Poin}
\bbe \|f(Y)-\bbe f(Y)\|^2\leq U_Y\, \bbe \int_{ \mathbb{R}^d} \|f(Y+u)-f(Y)\|^2 \nu(du).
\end{align}
In particular, note that if $Y$ satisfies the Poincar\'e inequality \eqref{eq:UCh}, then, for all $f\in H_\nu(\mu_Y)$ such that $f_j\in \mathcal{H}_Y$, $1\leq j\leq d$,
\begin{align*}
\bbe |f_j(Y)-\bbe f_j(Y)|^2 \leq U(Y,\nu) \bbe \int_{\mathbb{R}^d} |f_j(Y+u)-f_j(Y)|^2 \nu(du),
\end{align*}
so that $Y$ satisfies a Poincar\'e inequality in the sense of the Inequality $\eqref{ineq:Poin}$ with $U_Y=U(Y,\nu)$.

Moreover, let $A$ be the bilinear functional defined, for all test functions $f$ and $g$, by
\begin{align}\label{def:A}
A(f,g)=\bbe \int_{\mathbb{R}^d} \langle f(Y+u)-f(Y); g(Y+u)-g(Y)\rangle \nu(du),
\end{align}
and let $L$ be the linear functional defined, for all test functions $f$, by
\begin{align}\label{def:L}
L(f)=\bbe \langle Y; f(Y) \rangle.
\end{align}
Before solving the variational problem associated with $A$, $L$ and $H_{\nu}(\mu_Y)$, we need the following technical lemma. 

\begin{lem}\label{lem:Hilbert}
The vector space $H_{\nu}(\mu_Y)$ endowed with the bilinear functional 
\begin{align}
\langle f;g \rangle_{H_{\nu}(\mu_Y)}=\bbe \langle f(Y); g(Y) \rangle + A(f,g)
\end{align}
is a Hilbert space. Moreover, $A$, defined by \eqref{def:A}, is continuous on $H_{\nu}(\mu_Y)\times H_{\nu}(\mu_Y)$, coercive on $H_{\nu,0}(\mu_Y)$ while, $L$, defined by \eqref{def:L}, is continuous on $H_{\nu}(\mu_Y)$.
\end{lem}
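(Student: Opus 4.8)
The plan is to establish, in this order: (i) that $\langle\cdot;\cdot\rangle_{H_\nu(\mu_Y)}$ is a genuine inner product on $H_\nu(\mu_Y)$; (ii) that $H_\nu(\mu_Y)$ is complete for the associated norm; and (iii) the continuity of $A$ and $L$ together with the coercivity of $A$ on $H_{\nu,0}(\mu_Y)$. Steps (i) and (iii) should be routine applications of the Cauchy--Schwarz inequality and of the Poincar\'e-type hypothesis \eqref{ineq:Poin}; step (ii) is where the structural assumption $\nu\ast\mu_Y\ll\mu_Y$ is genuinely needed, and I expect it to be the main obstacle.

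For (i): bilinearity and symmetry are immediate, and finiteness of $\bbe\langle f(Y);g(Y)\rangle$ and of $A(f,g)$ for $f,g\in H_\nu(\mu_Y)$ follows from Cauchy--Schwarz together with the two integrability conditions appearing in the definition of $H_\nu(\mu_Y)$. Positive-definiteness is the only point deserving a word: $\langle f;f\rangle_{H_\nu(\mu_Y)}=\bbe\|f(Y)\|^2+\bbe\int_{\mathbb{R}^d}\|f(Y+u)-f(Y)\|^2\nu(du)$ is a sum of two nonnegative quantities, hence can vanish only if $\bbe\|f(Y)\|^2=0$, i.e. $f=0$ $\mu_Y$-a.e., which is precisely the zero class of $H_\nu(\mu_Y)$ under the stated identification.

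For (ii): given a Cauchy sequence $(f_n)_n$ in $H_\nu(\mu_Y)$, I would first observe that $(f_n)_n$ is Cauchy in $L^2(\mu_Y;\mathbb{R}^d)$, hence converges there to some $f$, and that the maps $(y,u)\mapsto f_n(y+u)-f_n(y)$ form a Cauchy sequence in the Hilbert space $L^2(\mu_Y(dy)\nu(du);\mathbb{R}^d)$ (the measure $\mu_Y\otimes\nu$ being $\sigma$-finite), hence converge there to some $g$. The crux is to identify $g(y,u)=f(y+u)-f(y)$ for $\mu_Y(dy)\nu(du)$-a.e. $(y,u)$, which then yields $f\in H_\nu(\mu_Y)$ and $f_n\to f$ in $H_\nu(\mu_Y)$. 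To this end I would pass to a subsequence along which $f_n\to f$ $\mu_Y$-a.e. and $f_n(\cdot+\cdot)-f_n(\cdot)\to g$ $\mu_Y(dy)\nu(du)$-a.e.; writing $N$ for the $\mu_Y$-null set off which $f_n\to f$, the set $\{(y,u):\,y+u\in N\}$ has $\mu_Y(dy)\nu(du)$-measure $\int_{\mathbb{R}^d}\int_{\mathbb{R}^d}\bbone_N(y+u)\nu(du)\mu_Y(dy)=(\nu\ast\mu_Y)(N)=0$, where the last equality is exactly where $\nu\ast\mu_Y\ll\mu_Y$ enters; and the set $\{(y,u):\,y\in N\}$ is trivially $\mu_Y(dy)\nu(du)$-null. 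Consequently $f_n(y+u)-f_n(y)\to f(y+u)-f(y)$ for a.e. $(y,u)$ along the subsequence, which identifies $g$ and completes the proof of completeness.

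For (iii): continuity of $A$ is Cauchy--Schwarz in $L^2(\mu_Y(dy)\nu(du))$, namely $|A(f,g)|\le A(f,f)^{1/2}A(g,g)^{1/2}\le\|f\|_{H_\nu(\mu_Y)}\|g\|_{H_\nu(\mu_Y)}$; coercivity on $H_{\nu,0}(\mu_Y)$ follows because any $f\in H_{\nu,0}(\mu_Y)$ has $\bbe f(Y)=0$, so \eqref{ineq:Poin} gives $\bbe\|f(Y)\|^2\le U_Y\,A(f,f)$, whence $\|f\|_{H_\nu(\mu_Y)}^2\le(1+U_Y)A(f,f)$, i.e. $A(f,f)\ge(1+U_Y)^{-1}\|f\|_{H_\nu(\mu_Y)}^2$; and continuity of $L$ comes from Cauchy--Schwarz together with $\bbe\|Y\|^2<\infty$, since $|L(f)|=|\bbe\langle Y;f(Y)\rangle|\le(\bbe\|Y\|^2)^{1/2}(\bbe\|f(Y)\|^2)^{1/2}\le(\bbe\|Y\|^2)^{1/2}\|f\|_{H_\nu(\mu_Y)}$. (One may also note in passing that $H_{\nu,0}(\mu_Y)$ is closed, being the kernel of the continuous functional $f\mapsto\bbe f(Y)$, so that the coercivity statement is of the form needed for a later Lax--Milgram argument.) As indicated, the only delicate step is the identification of the limit in (ii) through the absolute continuity $\nu\ast\mu_Y\ll\mu_Y$.
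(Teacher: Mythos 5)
Your proof is correct and follows essentially the same route as the paper's. The only difference worth noting is in the completeness step: you first show convergence of $(f_n)$ in $L^2(\mu_Y)$ and of the difference maps in $L^2(\mu_Y\otimes\nu)$ separately, and then identify the two limits by passing to an a.e.-convergent subsequence, whereas the paper compresses this into two applications of Fatou's lemma (first to bound $A(f,f)$ and conclude $f\in H_\nu(\mu_Y)$, then to show $f_n\to f$ in $H_\nu(\mu_Y)$); both arguments rest on the same key observation you make explicit, namely that a $\mu_Y$-null set $N$ satisfies $(\nu\ast\mu_Y)(N)=\int\int\bbone_N(y+u)\,\nu(du)\,\mu_Y(dy)=0$, so a.e. convergence of $f_n$ transfers to a.e. convergence of $f_n(\cdot+\cdot)$ with respect to $\mu_Y\otimes\nu$. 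Your coercivity constant $(1+U_Y)^{-1}$ differs from the paper's $\tfrac12\min(1,1/U_Y)$ but is equally valid (and slightly sharper when $U_Y\geq 1$); the remaining steps match.
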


\begin{proof}
First, it is clear that the bilinear symmetric functional $\langle \cdot ; \cdot\rangle_{H_{\nu}(\mu_Y)}$ is an inner product on $H_{\nu}(\mu_Y)$. Then, let $\|\cdot\|_{H_{\nu}(\mu_Y)}$ be the induced norm defined via $\|f\|^2_{H_{\nu}(\mu_Y)}=\bbe \|f(Y)\|^2+A(f,f)$, for all $f\in H_{\nu}(\mu_Y)$. Let us prove that $H_{\nu}(\mu_Y)$ endowed with this norm is complete. Let $(f_n)_{n\geq 1}$ be a Cauchy sequence in $H_{\nu}(\mu_Y)$. Therefore $(f_n)_{n\geq 1}$ is a Cauchy sequence in $L^2(\mu_Y)$, and there exists $f\in L^2(\mu_Y)$ such that $f_n\rightarrow f$, as $n\rightarrow +\infty$ in $L^2(\mu_Y)$. Now, pick a subsequence $(f_{n_k})_{k\geq 1}$ such that $f_{n_k}\rightarrow f$, $\mu_Y$-almost everywhere, as $k\rightarrow +\infty$. Fatou's lemma together with the assumption that $\nu \ast \mu_Y<<\mu_Y$ and the fact that $(f_n)_{n\geq 1}$ is a Cauchy sequence in $H_{\nu}(\mu_Y)$ (thus is bounded), imply that
\begin{align}\label{Fatou1}
A(f,f) \leq \underset{k\rightarrow+\infty}{\liminf} A(f_{n_k},f_{n_k}) \leq \underset{n\geq 1}{\sup} \|f_n\|^2_{H_{\nu}(\mu_Y)}<+\infty.
\end{align}
Hence, $f$ belongs to $H_{\nu}(\mu_Y)$. Another application of Fatou's lemma together with the fact that $(f_n)_{n\geq 1}$ is Cauchy in $H_\nu(\mu_Y)$ shows that $f_{n}\rightarrow f$ in $H_\nu(\mu_Y)$. Now, by the Cauchy-Schwarz inequality, for all $f,g\in H_\nu(\mu_Y)$,
\begin{align*}
|A(f,g)| &\leq \left(\bbe \int_{\mathbb{R}^d} \|f(Y+u)-f(Y)\|^2 \nu(du) \right)^{1/2} \left(\bbe \int_{\mathbb{R}^d} \|g(Y+u)-g(Y)\|^2 \nu(du) \right)^{1/2} \\
&\leq \|f\|_{H_\nu(\mu_Y)}  \|g\|_{H_\nu(\mu_Y)}.
\end{align*}
Moreover, since $Y$ satisfies the Poincar\'e inequality \eqref{ineq:Poin}, for all $f\in H_{\nu,0}(\mu_Y)$
\begin{align*}
A(f,f)&=\bbe \int_{\mathbb{R}^d} \|f(Y+u)-f(Y)\|^2 \nu(du),\\
&\geq \frac{1}{2}\bbe \int_{\mathbb{R}^d} \|f(Y+u)-f(Y)\|^2 \nu(du)+\frac{1}{2 U_Y}\bbe \|f(Y)\|^2,\\
&\geq C_{Y} \|f\|^2_{H_\nu(\mu_Y)}
\end{align*}
for $2C_Y= \min \left(1, 1/( U_Y)\right)>0$. Finally, the continuity property of the linear functional $L$ on $H_\nu(\mu_Y)$ follows from the Cauchy-Schwarz inequality, from $\bbe \|Y\|^2<+\infty$, and from the continuous embedding $H_\nu(\mu_Y)\hookrightarrow L^2(\mu_Y)$.
\end{proof}
\noindent
Note that since $H_{\nu,0}(\mu_Y)$ is a closed subspace of $H_{\nu}(\mu_Y)$, it is as well a Hilbert space with the inner product $\langle . ; . \rangle_{H_{\nu}(\mu_Y)}$. 

Based on Lemma \ref{lem:Hilbert}, a direct application of the Lax-Milgram Theorem ensures the existence of a Stein kernel in the sense of Definition \ref{defSteinkernel2} for probability measures $\mu_Y$ which satisfy the Poincar\'e inequality \eqref{ineq:Poin}. This is the content of the next theorem.

\begin{thm}\label{th:existence}
Let $Y$ be a centered non-degenerate random vector with finite second moment and with law $\mu_Y$ and let $\nu$ be a L\'evy measure on $\mathbb{R}^d$ such that $\int_{\|u\|\geq 1}\|u\|^2 \nu(du)<+\infty$. Assume that $Y$ satisfies the Poincar\'e inequality \eqref{ineq:Poin} for some $0<U_Y<+\infty$ and that $\nu \ast \mu_Y<< \mu_Y$. Then, there exists a unique $\tau_Y \in H_{\nu,0}(\mu_Y)$, such that, for all $f\in H_{\nu,0}(\mu_Y)$
\begin{align}\label{eq:VarProb}
A(f,\tau_Y)=L(f).
\end{align}
Moreover,
\begin{align}\label{est:Stein}
\bbe \int_{\mathbb{R}^d}\|\tau_Y(Y+u)-\tau_Y(Y)\|^2 \nu(du) \leq U_Y\bbe \|Y\|^2.
\end{align}
\end{thm}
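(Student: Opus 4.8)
The plan is to apply the Lax--Milgram theorem directly, using the structural results already assembled in Lemma \ref{lem:Hilbert}. First I would record that $H_{\nu,0}(\mu_Y)$, being a closed subspace of the Hilbert space $H_{\nu}(\mu_Y)$, is itself a Hilbert space under the inner product $\langle \cdot;\cdot\rangle_{H_{\nu}(\mu_Y)}$. On this space, Lemma \ref{lem:Hilbert} tells us that $A$ is a bounded (continuous) bilinear form and is coercive: $A(f,f)\geq C_Y\|f\|^2_{H_{\nu}(\mu_Y)}$ for all $f\in H_{\nu,0}(\mu_Y)$, while $L$ is a continuous linear functional on $H_{\nu}(\mu_Y)$ and hence on $H_{\nu,0}(\mu_Y)$. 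The Lax--Milgram theorem then yields a unique $\tau_Y\in H_{\nu,0}(\mu_Y)$ with $A(f,\tau_Y)=L(f)$ for all $f\in H_{\nu,0}(\mu_Y)$, which is precisely \eqref{eq:VarProb}.

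Next I would verify that this $\tau_Y$ is a Stein kernel of $Y$ with respect to $X$ in the sense of Definition \ref{defSteinkernel2}, i.e.\ that the identity $L(f)=A(f,\tau_Y)$ extends from mean-zero $f$ to all admissible test functions. For a general $\mathbb{R}^d$-valued $f\in H_{\nu}(\mu_Y)$, write $f = (f-\bbe f(Y)) + \bbe f(Y)$; the constant part contributes nothing to $A(\cdot,\tau_Y)$ since $f(y+u)-f(y)$ vanishes for a constant, and it contributes $\langle \bbe f(Y);\bbe Y\rangle = 0$ to $L$ because $Y$ is centered. Hence $L(f)=L(f-\bbe f(Y))=A(f-\bbe f(Y),\tau_Y)=A(f,\tau_Y)$, which unwinding the definitions of $A$ and $L$ is exactly the defining identity for a Stein kernel. (One should also note the class of $f$ for which both sides are finite is exactly controlled by $f\in H_{\nu}(\mu_Y)$ together with $\bbe\|Y\|^2<\infty$, via Cauchy--Schwarz.)

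For the quantitative bound \eqref{est:Stein}, I would test \eqref{eq:VarProb} against $f=\tau_Y$ itself, which is legitimate since $\tau_Y\in H_{\nu,0}(\mu_Y)$. This gives
\begin{align*}
\bbe \int_{\mathbb{R}^d}\|\tau_Y(Y+u)-\tau_Y(Y)\|^2\nu(du) = A(\tau_Y,\tau_Y) = L(\tau_Y) = \bbe\langle Y;\tau_Y(Y)\rangle.
\end{align*}
By the Cauchy--Schwarz inequality in $L^2(\mu_Y)$, the right-hand side is at most $(\bbe\|Y\|^2)^{1/2}(\bbe\|\tau_Y(Y)\|^2)^{1/2}$. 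Since $\tau_Y$ is centered, the Poincar\'e inequality \eqref{ineq:Poin} gives $\bbe\|\tau_Y(Y)\|^2\leq U_Y\,\bbe\int_{\mathbb{R}^d}\|\tau_Y(Y+u)-\tau_Y(Y)\|^2\nu(du) = U_Y\,A(\tau_Y,\tau_Y)$. Combining the two displays, $A(\tau_Y,\tau_Y)\leq (\bbe\|Y\|^2)^{1/2}\,U_Y^{1/2}\,A(\tau_Y,\tau_Y)^{1/2}$, and dividing through by $A(\tau_Y,\tau_Y)^{1/2}$ (trivial if it is zero) and squaring yields $A(\tau_Y,\tau_Y)\leq U_Y\,\bbe\|Y\|^2$, which is \eqref{est:Stein}.

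The genuinely substantive work has already been done in Lemma \ref{lem:Hilbert} (completeness of $H_{\nu}(\mu_Y)$, and coercivity of $A$ via the Poincar\'e inequality), so the main point requiring care here is the passage from the variational identity on mean-zero functions to the full Stein-kernel identity, together with checking that the test-function class in Definition \ref{defSteinkernel2} is compatible with $H_{\nu}(\mu_Y)$; the rest is a routine Lax--Milgram application plus a one-line Cauchy--Schwarz/Poincar\'e estimate.
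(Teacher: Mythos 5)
Your proof is correct and follows essentially the same route as the paper: a direct Lax--Milgram application on $H_{\nu,0}(\mu_Y)$ using the continuity, coercivity, and completeness established in Lemma~\ref{lem:Hilbert}, followed by testing against $f=\tau_Y$ and combining Cauchy--Schwarz with the Poincar\'e inequality to obtain \eqref{est:Stein}. The only addition beyond the paper's argument is your middle paragraph extending the variational identity from mean-zero $f$ to all of $H_{\nu}(\mu_Y)$ (via the decomposition $f=(f-\bbe f(Y))+\bbe f(Y)$ and $\bbe Y=0$), which goes beyond what the theorem statement asks for but is a useful observation when $\tau_Y$ is later used as a Stein kernel in the sense of Definition~\ref{defSteinkernel2}.
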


\begin{proof}
The first part of the theorem is a direct application of the Lax-Milgram Theorem with $A$, $L$ and $H_{\nu,0}(\mu_Y)$. To obtain the inequality \eqref{est:Stein}, note that thanks to \eqref{eq:VarProb} with $f=\tau_Y$, 
\begin{align}
A(\tau_Y,\tau_Y)=\bbe \int_{\mathbb{R}^d} \|\tau_Y(Y+u)-\tau_Y(Y)\|^2 \nu(du)\leq \sqrt{\bbe \|Y\|^2} \sqrt{\bbe \|\tau_Y(Y)\|^2}.
\end{align}
Finally, the Poincar\'e inequality \eqref{ineq:Poin} combined with the previous inequality implies
\begin{align*}
\bbe \int_{\mathbb{R}^d} \|\tau_Y(Y+u)-\tau_Y(Y)\|^2 \nu(du)\leq (U_Y)^{1/2} \left(\bbe \|Y\|^2\right)^{1/2} \left(\bbe \int_{\mathbb{R}^d} \|\tau_Y(Y+u)-\tau_Y(Y)\|^2 \nu(du)\right)^{1/2}
\end{align*}
which concludes the proof.
\end{proof}
\noindent
The next theorem is the main result of this section.

\begin{thm}\label{th:PoinQuan}
Let $X$ be a centered non-degenerate self-decomposable random vector without Gaussian component, with law $\mu_X$, with L\'evy measure $\nu$, such that $\bbe \|X\|^2<+\infty$ and let also the functions $k_x$ given by \eqref{rep:sd} satisfy \eqref{cond:kx}. Let $Y$ be a centered non-degenerate random vector with law $\mu_Y$, with $\bbe \|Y\|^2<+\infty$ and such that $Y$ satisfies the Poincar\'e inequality \eqref{ineq:Poin} with $1\leq U_Y<+\infty$ and that $\nu \ast \mu_Y<< \mu_Y$. Then, 
\begin{align}\label{ineq:PoinQuan2}
d_{W_2}(\mu_X,\mu_Y) \leq \frac{d}{2} \left(\int_{\mathbb{R}^d} \|u\|^2 \nu(du)\right)^{1/2} \left(U_Y\bbe \|Y\|^2+\int_{\mathbb{R}^d} \|u\|^2\nu(du)-2\bbe \|Y\|^2\right)^{1/2}.
\end{align}
Moreover, if $\bbe \|Y\|^2=\int_{\mathbb{R}^d} \|u\|^2\nu(du)$, then
\begin{align}\label{ineq:PoinQuan1}
d_{W_2}(\mu_X,\mu_Y) \leq \frac{d}{2} \left(\int_{\mathbb{R}^d} \|u\|^2 \nu(du)\right)\sqrt{U_Y-1}.
\end{align}
\end{thm}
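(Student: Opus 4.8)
The plan is to derive \eqref{ineq:PoinQuan2} and \eqref{ineq:PoinQuan1} by assembling the two results already established in this section: Theorem~\ref{th:existence}, which produces a Stein kernel (and the bound \eqref{est:Stein}) under the Poincar\'e inequality \eqref{ineq:Poin}, together with the variational identity \eqref{eq:VarProb}; and Theorem~\ref{thm:SteinKernel}, which converts a bound on the Stein discrepancy into a bound on $d_{W_2}$. First I would check that the hypotheses of Theorem~\ref{th:existence} are in force: $Y$ is centered with $\bbe\|Y\|^2<+\infty$, satisfies \eqref{ineq:Poin} with $0<U_Y<+\infty$ (as $U_Y\geq 1$), and $\nu\ast\mu_Y<<\mu_Y$; moreover $\int_{\|u\|\geq 1}\|u\|^2\nu(du)<+\infty$ because $X$ is infinitely divisible with $\bbe\|X\|^2<+\infty$, so together with $\int_{\mathbb{R}^d}(1\wedge\|u\|^2)\nu(du)<+\infty$ one gets $\int_{\mathbb{R}^d}\|u\|^2\nu(du)<+\infty$, a finiteness used throughout. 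Hence there exists a Stein kernel $\tau_Y\in H_{\nu,0}(\mu_Y)$ of $Y$ with respect to $X$; in particular a Stein kernel exists, so Theorem~\ref{thm:SteinKernel} applies and yields $d_{W_2}(\mu_X,\mu_Y)\leq \tfrac{d}{2}\big(\int_{\mathbb{R}^d}\|u\|^2\nu(du)\big)^{1/2}S(\mu_Y||\mu_X)$.

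It then remains to estimate $S(\mu_Y||\mu_X)$, which I would do by bounding it above by the defining functional evaluated at the particular kernel $\tau_Y$ of Theorem~\ref{th:existence}. Expanding $\|\tau_Y(y+u)-\tau_Y(y)-u\|^2$ and integrating against $\nu(du)\mu_Y(dy)$: the diagonal term is $A(\tau_Y,\tau_Y)$, the last term is $\int_{\mathbb{R}^d}\|u\|^2\nu(du)$, and the cross term is $2A(\tau_Y,g)$ with $g(y)=y$. Now $g\in H_{\nu,0}(\mu_Y)$ (it lies in $L^2(\mu_Y)$ since $\bbe\|Y\|^2<+\infty$, its increments $g(y+u)-g(y)=u$ are $\nu$-square-integrable since $\int_{\mathbb{R}^d}\|u\|^2\nu(du)<+\infty$, and $\bbe g(Y)=\bbe Y=0$), so the symmetry of $A$ and \eqref{eq:VarProb} with $f=g$ give $A(\tau_Y,g)=L(g)=\bbe\langle Y;Y\rangle=\bbe\|Y\|^2$, while \eqref{est:Stein} gives $A(\tau_Y,\tau_Y)\leq U_Y\bbe\|Y\|^2$. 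Combining, $S(\mu_Y||\mu_X)^2\leq U_Y\bbe\|Y\|^2-2\bbe\|Y\|^2+\int_{\mathbb{R}^d}\|u\|^2\nu(du)$ (the right-hand side being automatically nonnegative, as it dominates a nonnegative quantity).

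Substituting this into the bound from Theorem~\ref{thm:SteinKernel} gives \eqref{ineq:PoinQuan2}, and \eqref{ineq:PoinQuan1} follows at once by setting $\bbe\|Y\|^2=\int_{\mathbb{R}^d}\|u\|^2\nu(du)$, which collapses the second factor to $\big(\int_{\mathbb{R}^d}\|u\|^2\nu(du)\big)^{1/2}\sqrt{U_Y-1}$. I do not expect a genuine obstacle: the result is essentially an assembly of the preceding theorems, and the only points requiring care are the bookkeeping — verifying $\int_{\mathbb{R}^d}\|u\|^2\nu(du)<+\infty$ so that $g$ is an admissible test function and Theorem~\ref{thm:SteinKernel} is applicable, confirming that the function furnished by the Lax--Milgram argument in Theorem~\ref{th:existence} genuinely satisfies the identity of Definition~\ref{defSteinkernel2} (it does, since \eqref{eq:VarProb} is exactly that identity on $H_{\nu,0}(\mu_Y)$, and it extends to all of $H_\nu(\mu_Y)$ because adding a constant to $f$ changes neither side when $\bbe Y=0$), and keeping track of the sign in the expansion of the square so that the $-2\bbe\|Y\|^2$ term appears correctly.
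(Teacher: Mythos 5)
Your proposal is correct and takes essentially the same route as the paper: both apply Theorem~\ref{thm:SteinKernel} together with the Stein kernel produced by Theorem~\ref{th:existence}, expand $\|\tau_Y(y+u)-\tau_Y(y)-u\|^2$, evaluate the cross term as $\bbe\|Y\|^2$ by testing the Stein identity against $f(y)=y$, and bound the quadratic term via \eqref{est:Stein}. The only (cosmetic) difference is that you phrase the cross-term computation through the bilinear form $A$ and \eqref{eq:VarProb} rather than citing Definition~\ref{defSteinkernel2} directly, and you prove the general bound first and specialize, whereas the paper proves the specialized bound and remarks that the general one follows identically.
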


\begin{proof}
Let us start with the proof of \eqref{ineq:PoinQuan1}. First, note that since $M_1(f_h)<+\infty$ and $M_2(f_h)<+\infty$, for $h\in \mathcal{H}_2\cap C_c^\infty(\mathbb{R}^d)$, $\nabla(f_h)$ belongs to $H_{\nu}(\mu_Y)$ with $f_h$ given by Proposition \ref{prop:SteinEq}. Thus, using $\bbe Y=0$, by Theorem \ref{thm:SteinKernel}, 
\begin{align}\label{ineq:1QPoin}
d_{W_2}(\mu_X,\mu_Y) \leq \frac{d}{2} \left(\int_{\mathbb{R}^d} \|u\|^2 \nu(du)\right)^{1/2}S\left(\mu_Y || \mu_X\right).
\end{align}
We continue by estimating $\bbe \int_{\mathbb{R}^d} \|\tau_Y(Y+u)-\tau_Y(Y)-u\|^2\nu(du)$. By the Pythagorean Theorem, Definition \ref{defSteinkernel2} and the fact that $\bbe \|Y\|^2=\int_{\mathbb{R}^d} \|u\|^2\nu(du)$
\begin{align*}
\bbe \int_{\mathbb{R}^d} \|\tau_Y(Y+u)-\tau_Y(Y)-u\|^2\nu(du)&= \bbe \int_{\mathbb{R}^d} \|\tau_Y(Y+u)-\tau_Y(Y)\|^2\nu(du)+\int_{\mathbb{R}^d} \|u\|^2\nu(du)\\
&\quad\quad-2\, \bbe \int_{\mathbb{R}^d} \langle u; \tau_Y(Y+u)-\tau_Y(Y) \rangle \nu(du),\\
&=\bbe \int_{\mathbb{R}^d} \|\tau_Y(Y+u)-\tau_Y(Y)\|^2\nu(du)+\int_{\mathbb{R}^d} \|u\|^2\nu(du)\\
&\quad\quad-2\, \bbe \|Y\|^2,\\
&=\bbe \int_{\mathbb{R}^d} \|\tau_Y(Y+u)-\tau_Y(Y)\|^2\nu(du)-\int_{\mathbb{R}^d} \|u\|^2\nu(du).
\end{align*}
Moreover, \eqref{est:Stein} implies that
\begin{align*}
\bbe \int_{\mathbb{R}^d} \|\tau_Y(Y+u)-\tau_Y(Y)-u\|^2\nu(du) \leq (U_Y-1) \int_{\mathbb{R}^d} \|u\|^2\nu(du),
\end{align*}
so that
\begin{align}\label{ineq:2QPoin}
S\left(\mu_Y || \mu_X\right) \leq \sqrt{U_Y-1} \left(\int_{\mathbb{R}^d} \|u\|^2\nu(du)\right)^{1/2}.
\end{align}
Combining \eqref{ineq:1QPoin} and \eqref{ineq:2QPoin} concludes the proof of the theorem. The proof of \eqref{ineq:PoinQuan2} follows in a completely similar manner.
\end{proof}

\begin{rem}\label{rem:PoinQuan}
(i) When $\bbe \|Y\|^2=\int_{\mathbb{R}^d} \|u\|^2\nu(du)$ and $\bbe Y=0$, note that $U_Y\geq 1$. Indeed, in \eqref{ineq:Poin}, take $f(y)=y$, for all $y\in \mathbb{R}^d$.\\
(ii) If Y is as in Theorem \ref{th:PoinQuan} with $\bbe \|Y\|^2=\int_{\mathbb{R}^d} \|u\|^2\nu(du)$, and if $U_Y=1$, then, clearly from Theorem \ref{th:PoinQuan}, $Y=_{d}X$ since $d_{W_2}(\mu_X,\mu_Y)=0$. Conversely, if $Y=_d X$, with $X$ as in Theorem \ref{th:PoinQuan}, then, for all $f=(f_1,...,f_d)$, \eqref{Poinc:ID} asserts that
\begin{align}
\bbe |f_j(Y)-\bbe f_j(Y)|^2 \leq \bbe \int_{\mathbb{R}^d} |f_j(Y+u)-f_j(Y)|^2 \nu(du),
\end{align}
for all $1\leq j\leq d$. Therefore, it follows that $U_Y=1$.\\
(iii) The following inequality on the Stein discrepancy is a direct byproduct of the proof of the previous theorem
\begin{align*}
S\left(\mu_Y || \mu_X\right) \leq \left(U_Y\bbe \|Y\|^2+\int_{\mathbb{R}^d} \|u\|^2\nu(du)-2\bbe \|Y\|^2\right)^{1/2}.
\end{align*}
(iv) All the above results should be compared with the analogous Gaussian ones obtained in \cite{CFP18} (see \cite[Theorem $2.4$ and Corollary $2.5$]{CFP18}).
\end{rem}
\noindent
As a straightforward corollary to Theorem \ref{th:PoinQuan}, the following convergence result holds true.

\begin{cor}\label{Sequence}
Let $X$ be a centered non-degenerate self-decomposable random vector without Gaussian component, with law $\mu_X$, L\'evy measure $\nu$, such that $\bbe \|X\|^2<+\infty$ and let also the functions $k_x$ given by \eqref{rep:sd} satisfy \eqref{cond:kx}. Let $(Y_n)_{n\geq 1}$ be a sequence of centered square-integrable non-degenerate random vectors with laws $(\mu_{n})_{n\geq 1}$, such that $\nu \ast \mu_n << \mu_n$, for all $n\geq 1$, and such that $Y_n$ satisfies the Poincar\'e inequality \eqref{ineq:Poin} with $1\leq U_n<+\infty$, for all $n\geq 1$. If $\bbe \|Y_n\|^2 \rightarrow \int_{\mathbb{R}^d}\|u\|^2\nu(du)$ and $U_n \rightarrow 1$, as $n$ tends to $+\infty$, then, $(Y_n)_{n\geq 1}$ converges in distribution towards $X$.
\end{cor}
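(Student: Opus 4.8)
The plan is to apply the quantitative bound \eqref{ineq:PoinQuan2} of Theorem \ref{th:PoinQuan} to each $Y_n$ and then pass to the limit $n\to+\infty$. First, I would check that, for every $n\geq 1$, the random vector $Y_n$ satisfies all the hypotheses of Theorem \ref{th:PoinQuan}: by assumption $Y_n$ is centered, non-degenerate, square-integrable, satisfies the Poincar\'e inequality \eqref{ineq:Poin} with $1\leq U_n<+\infty$, and $\nu\ast\mu_n<<\mu_n$; moreover $X$ is as in the statement and $\int_{\mathbb{R}^d}\|u\|^2\nu(du)<+\infty$ since $\bbe\|X\|^2<+\infty$. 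Hence \eqref{ineq:PoinQuan2} yields, for all $n\geq 1$,
\begin{align*}
d_{W_2}(\mu_X,\mu_n)\leq \frac{d}{2}\left(\int_{\mathbb{R}^d}\|u\|^2\nu(du)\right)^{1/2}\left(U_n\,\bbe\|Y_n\|^2+\int_{\mathbb{R}^d}\|u\|^2\nu(du)-2\,\bbe\|Y_n\|^2\right)^{1/2}.
\end{align*}

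Write $m_2:=\int_{\mathbb{R}^d}\|u\|^2\nu(du)$. Since $\bbe\|Y_n\|^2\to m_2$ and $U_n\to 1$, one has $U_n\,\bbe\|Y_n\|^2\to m_2$, so the bracketed term converges to $m_2+m_2-2m_2=0$. It follows that $d_{W_2}(\mu_X,\mu_n)\to 0$ as $n\to+\infty$.

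It remains to upgrade this to weak convergence. For any $h\in\mathcal{C}^{\infty}_c(\mathbb{R}^d)$ not identically zero, set $c_h:=\bigl(\max_{0\leq\ell\leq 2}M_\ell(h)\bigr)^{-1}>0$, so that $c_h\,h\in\mathcal{H}_2$; then by the definition of $d_{W_2}$,
\begin{align*}
\left|\bbe h(Y_n)-\bbe h(X)\right|\leq c_h^{-1}\,d_{W_2}(\mu_X,\mu_n)\longrightarrow 0,\qquad n\to+\infty,
\end{align*}
and the conclusion is trivial when $h\equiv 0$. Since $\mathcal{C}^{\infty}_c(\mathbb{R}^d)$ is convergence-determining for weak convergence of probability measures on $\mathbb{R}^d$ (testing against compactly supported smooth functions gives vague convergence, which is upgraded to weak convergence because the limit is a probability measure, so that no mass escapes at infinity), we conclude that $\mu_n\Rightarrow\mu_X$, i.e. $(Y_n)_{n\geq 1}$ converges in distribution towards $X$.

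No serious obstacle arises in this argument: the only steps deserving a line of justification are the elementary limit computation for the bracketed factor and the standard fact that convergence of $\bbe h(Y_n)$ to $\bbe h(X)$ for all $h\in\mathcal{C}^{\infty}_c(\mathbb{R}^d)$ — equivalently, after normalization, for all $h\in\mathcal{H}_2$ — forces convergence in distribution.
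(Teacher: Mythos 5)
Your proof is correct and is precisely the intended ``straightforward corollary'' argument: apply the general bound \eqref{ineq:PoinQuan2} of Theorem \ref{th:PoinQuan}, observe that the factor $\left(U_n\,\bbe\|Y_n\|^2+\int\|u\|^2\nu(du)-2\,\bbe\|Y_n\|^2\right)^{1/2}\to 0$, and use that $d_{W_2}(\mu_X,\mu_n)\to 0$ forces weak convergence since $\mathcal{H}_2\supset c\,\mathcal{C}^\infty_c(\mathbb{R}^d)$ (after normalization) is convergence-determining. The paper gives no explicit proof, and your normalization step making $c_h\,h\in\mathcal{H}_2$ together with the vague-to-weak upgrade is exactly the standard filling-in the authors leave to the reader.
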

\noindent
To end this section, we briefly discuss the condition $\nu\ast \mu_Y << \mu_Y$ appearing in Theorems \ref{th:existence} and \ref{th:PoinQuan}. For this purpose, let $\nu$ be the L\'evy measure of a non-degenerate infinitely divisible random vector, $X$, in $\mathbb{R}^d$ with law $\mu_X$. Now, let $\mathcal{P}(\nu)$ be the set of probability measures, $\mu$, on $\mathbb{R}^d$, such that $\nu\ast\mu<<\mu$. First of all, thanks to \cite[Lemma $4.1$]{Chen}, the set $\mathcal{P}(\nu)$ is not empty and contains the probability measure $\mu_X$. Moreover, it is clearly a convex set. Now, let us describe some further non-trivial examples of probability measures belonging to $\mathcal{P}(\nu)$. For this purpose, we say that two probability measures $\mu_1$ and $\mu_2$ on $\mathbb{R}^d$ are equivalent (denoted by $\mu_1\sim \mu_2$) if for any Borel set $B$ of $\mathbb{R}^d$, $\mu_1(B)=0$ if and only if $\mu_2(B)=0$.

\begin{prop}\label{Pnu}
Let $X$ be a non-degenerate infinitely divisible random vector in $\mathbb{R}^d$ with law $\mu_X$ and L\'evy measure $\nu$ and $\mathcal{P}(\nu)$ be the set of probability measures, $\mu$, in $\mathbb{R}^d$ such that $\nu\ast \mu<<\mu$. Let $Y$ be a non-degenerate random vector in $\mathbb{R}^d$ with law $\mu_Y$ such that $\mu_Y \sim\mu_X$. Then, $\mu_Y\in \mathcal{P}(\nu)$.
\end{prop}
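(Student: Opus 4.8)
The plan is to unwind both absolute-continuity statements in terms of null sets and transfer the defining property of $\mathcal{P}(\nu)$ from $\mu_X$ to $\mu_Y$ using the equivalence $\mu_X\sim\mu_Y$. Since $\mu_X\in\mathcal{P}(\nu)$ by \cite[Lemma $4.1$]{Chen}, we have $\nu\ast\mu_X<<\mu_X$. We want to show $\nu\ast\mu_Y<<\mu_Y$, i.e.\ that every Borel set $B$ with $\mu_Y(B)=0$ also satisfies $(\nu\ast\mu_Y)(B)=0$. First I would note that $\mu_Y\sim\mu_X$ means $\mu_X$ and $\mu_Y$ have exactly the same null sets; in particular $\mu_Y(B)=0$ forces $\mu_X(B)=0$, and hence $(\nu\ast\mu_X)(B)=0$ by the hypothesis on $\mu_X$.

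The next step is to write the convolution explicitly: for a Borel set $B$,
\begin{align*}
(\nu\ast\mu_Y)(B)=\int_{\mathbb{R}^d}\int_{\mathbb{R}^d}\bbone_B(u+y)\,\nu(du)\,\mu_Y(dy)=\int_{\mathbb{R}^d}\nu(B-y)\,\mu_Y(dy),
\end{align*}
and similarly $(\nu\ast\mu_X)(B)=\int_{\mathbb{R}^d}\nu(B-y)\,\mu_X(dy)$, where $B-y=\{u\in\mathbb{R}^d:\,u+y\in B\}$. Now $(\nu\ast\mu_X)(B)=0$ together with the fact that the integrand $y\mapsto\nu(B-y)$ is nonnegative gives $\nu(B-y)=0$ for $\mu_X$-almost every $y$. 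Let $N$ be the Borel set of those $y$ for which $\nu(B-y)>0$; then $\mu_X(N)=0$. Since $\mu_Y\sim\mu_X$, also $\mu_Y(N)=0$, and therefore $\nu(B-y)=0$ for $\mu_Y$-almost every $y$ as well, which yields $(\nu\ast\mu_Y)(B)=\int_{\mathbb{R}^d}\nu(B-y)\,\mu_Y(dy)=0$. Hence $\nu\ast\mu_Y<<\mu_Y$, i.e.\ $\mu_Y\in\mathcal{P}(\nu)$.

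The only genuinely delicate point is the measurability of the map $y\mapsto\nu(B-y)$, needed to justify Fubini/Tonelli and to speak of the null set $N$; this is standard (it follows from a monotone-class argument starting with $B$ a finite union of rectangles, or directly from the joint measurability of $(u,y)\mapsto\bbone_B(u+y)$ together with Tonelli's theorem, which applies since $\nu$ is $\sigma$-finite as a L\'evy measure and $\mu_Y$ is a probability measure). Everything else is a routine translation between the null-set and the integral formulations of absolute continuity, so I expect no real obstacle beyond being careful that $\sigma$-finiteness of $\nu$ legitimizes the use of Tonelli's theorem.
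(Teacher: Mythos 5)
Your proof is correct and follows essentially the same route as the paper: reduce to $\mu_X(B)=0$, invoke $\mu_X\in\mathcal{P}(\nu)$, and then transfer the conclusion back to $\mu_Y$. The only difference is that where the paper simply asserts that $\mu_Y\sim\mu_X$ implies $\nu\ast\mu_Y\ll\nu\ast\mu_X$, you prove that implication explicitly via Tonelli and the representation $(\nu\ast\mu)(B)=\int\nu(B-y)\,\mu(dy)$, which is a reasonable thing to spell out.
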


\begin{proof}
Let $B$ be a Borel set of $\mathbb{R}^d$ such that $\mu_Y(B)=0$. Hence $\mu_X(B)=0$ since $\mu_Y\sim \mu_X$. But $\mu_X\in \mathcal{P}(\nu)$, thus $\nu\ast\mu_X(B)=0$. Finally, $\nu\ast \mu_Y<<\nu\ast \mu_X$, since $\mu_Y\sim \mu_X$, and therefore, $\nu\ast \mu_Y(B)=0$, which concludes the proof. 
\end{proof}
\noindent
As a further straightforward corollary, the following result holds true.

\begin{cor}\label{ID:Pnu}
Let $X$ be a non-degenerate infinitely divisible random vector in $\mathbb{R}^d$ without Gaussian component, with law $\mu_X$, L\'evy measure $\nu_X$ and parameter $b_X\in\mathbb{R}^d$ and let $\mathcal{P}(\nu_X)$ be the set of probability measures, $\mu$, on $\mathbb{R}^d$ such that $\nu_X\ast \mu<<\mu$. Let $Y$ be a non-degenerate infinitely divisible random vector in $\mathbb{R}^d$ without Gaussian component, with law $\mu_Y$, L\'evy measure $\nu_Y$ and parameter $b_Y\in \mathbb{R}^d$. Assume that $\nu_X \sim \nu_Y$ and that
\begin{align*}
\int_{\mathbb{R}^d} \left(e^{\Phi(u)/2}-1\right)^2 \nu_X(du)<+\infty,\quad b_Y-b_X-\int_{\|u\|\leq 1} u(\nu_Y-\nu_X)(du)=0,
\end{align*} 
with $\exp(\Phi(u))=d\nu_Y/d\nu_X $, for all $u\in \mathbb{R}^d$. Then, $\mu_Y\in \mathcal{P}(\nu_X)$.
\end{cor}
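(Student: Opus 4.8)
The plan is to reduce the statement to the equivalence $\mu_X\sim\mu_Y$ of the two laws and then to apply Proposition \ref{Pnu}. Recall that $\mu_X\in\mathcal{P}(\nu_X)$, as noted right before Proposition \ref{Pnu} via \cite[Lemma $4.1$]{Chen}. Hence, once one knows that $\mu_Y\sim\mu_X$, Proposition \ref{Pnu} applied with the L\'evy measure $\nu=\nu_X$ immediately yields $\mu_Y\in\mathcal{P}(\nu_X)$, which is the assertion. So the entire content of the corollary is to verify that $\mu_X\sim\mu_Y$.

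For this, I would invoke the classical dichotomy for infinitely divisible distributions on $\mathbb{R}^d$, namely \cite[Theorem $33.1$]{S}: two infinitely divisible laws are either equivalent or mutually singular, and they are equivalent if and only if (a) their Gaussian covariance matrices agree, (b) their L\'evy measures are equivalent and $\int_{\mathbb{R}^d}(e^{\Phi(u)/2}-1)^2\nu_X(du)<+\infty$ where $e^{\Phi}=d\nu_Y/d\nu_X$, and (c) the drift difference $b_Y-b_X-\int_{\|u\|\leq1}u(\nu_Y-\nu_X)(du)$ lies in the range of (the square root of) the common Gaussian covariance. Since $X$ and $Y$ have no Gaussian component, the covariance matrices are both $0$: condition (a) is trivial and condition (c) reduces to the equality $b_Y-b_X-\int_{\|u\|\leq1}u(\nu_Y-\nu_X)(du)=0$. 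Together with (b), these are exactly the two hypotheses displayed in the statement, so \cite[Theorem $33.1$]{S} gives $\mu_X\sim\mu_Y$, and Proposition \ref{Pnu} then concludes.

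The only point requiring care is the bookkeeping with conventions: one must make sure the truncation function in the triplet $(b,A,\nu)$ of Section \ref{sec:not} is the indicator $\bbone_D$ of the closed unit ball, so that condition (c) of \cite[Theorem $33.1$]{S} reads precisely as in the hypothesis, and one must note that the vector $\int_{\|u\|\leq1}u(\nu_Y-\nu_X)(du)$ is well defined, i.e. that $\nu_Y-\nu_X$ integrates $\|u\|$ near the origin. The latter is not an extra assumption: it is a consequence of $\nu_X\sim\nu_Y$ together with $\int_{\mathbb{R}^d}(e^{\Phi(u)/2}-1)^2\nu_X(du)<+\infty$, and this is established as part of \cite[Theorem $33.1$]{S}. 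Consequently no separate estimate is needed, and there is no genuine obstacle beyond this matching of notation with the cited equivalence criterion.
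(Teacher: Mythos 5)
Your argument matches the paper's proof exactly: the paper also cites \cite[Theorem $33.1$]{S} to conclude $\mu_X\sim\mu_Y$ from the two displayed hypotheses (noting that the absence of a Gaussian part trivializes conditions (a) and (c)), and then applies Proposition \ref{Pnu}. Your additional remarks on conventions and integrability of $\nu_Y-\nu_X$ near the origin are correct but are details subsumed in the cited theorem.
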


\begin{proof}
This is a direct application of Proposition \ref{Pnu} together with \cite[Theorem $33.1$]{S}.
\end{proof}



\appendix
\section{Appendix}
\label{sec:appendix}
\noindent
The aim of this section is to provide technical results (often multivariate versions of univariate ones proved in \cite{AH18}) which are used throughout the previous sections.

\begin{lem}\label{lem:MomBounds}
Let $X$ be a non-degenerate self-decomposable random vector in $\mathbb{R}^d$, without Gaussian component, with law $\mu_X$, characteristic function $\varphi$ and such that $\bbe \|X\|<\infty$. Assume further that, for any $0<a<b<+\infty$ the functions $k_x$ given by \eqref{rep:sd} satisfy the following condition
\begin{align}\label{MB:condk}
\sup_{x\in S^{d-1}}\sup_{r\in (a,b)}k_x(r)<+\infty.
\end{align}
Let $X_t$, $t\geq 0$, be the random vectors each with characteristic functions, $\varphi_t$, given, for all $\xi\in \mathbb{R}^d$ by
\begin{align}
\varphi_t(\xi)=\dfrac{\varphi(\xi)}{\varphi(e^{-t}\xi)}.
\end{align}
Then,
\newline
(i)
\begin{align}\label{eq:finiteSup}
\underset{t> 0}{\sup}\,\bbe \|X_t\|<+\infty,
\end{align}
and,\\
(ii) for all $\xi\in \mathbb{R}^d$ and all $t\in(0,1)$,
\begin{align}\label{ineq:boundcharac}
\frac{1}{t}\left|\varphi_t(\xi)-1\right|\leq C (\|\xi\| \|\bbe X\|+\|\xi\|+\|\xi\|^2),
\end{align}
for some $C>0$ independent of $\xi$ and $t$.
\end{lem}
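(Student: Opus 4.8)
The plan is to establish both (i) and (ii) by working directly with the Lévy--Khintchine representation \eqref{rep:charac2} of $\varphi_t$, exploiting the monotonicity of $k_x(\cdot)$ in $r$ together with condition \eqref{MB:condk}. For part (i), I would first note that $X_t\sim ID\big(\bbe X(1-e^{-t}),0,\nu_t\big)$ where $\nu_t$ has polar decomposition with radial density $\big(k_x(r)-k_x(e^t r)\big)/r$ against $\lambda(dx)$. Since $X_t$ is infinitely divisible with no Gaussian part, $\bbe\|X_t\|<\infty$ as soon as $\int_{\|u\|\geq 1}\|u\|\,\nu_t(du)<\infty$, and then a standard bound gives $\bbe\|X_t\|\leq (1-e^{-t})\|\bbe X\|+\int_{\|u\|\geq 1}\|u\|\,\nu_t(du)+\int_{\|u\|\le 1}\|u\|\,\nu_t(du)$ up to universal constants (one can get this from differentiating $\varphi_t$, or from the $L^1$-triangle inequality for the compound Poisson approximation). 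The key point is the pointwise domination $0\le k_x(r)-k_x(e^t r)\le k_x(r)$ valid for all $t\ge 0$, which is immediate from $k_x$ being non-increasing and nonnegative; hence $\nu_t\le\nu$ as measures, uniformly in $t$, and therefore
\begin{align*}
\sup_{t>0}\bbe\|X_t\| \leq \|\bbe X\| + C\Big(\int_{\|u\|\le 1}\|u\|^2\nu(du)+\int_{\|u\|\ge 1}\|u\|\,\nu(du)\Big)<+\infty,
\end{align*}
the last integrals being finite because $\nu$ is a Lévy measure and $\bbe\|X\|<\infty$ (which forces $\int_{\|u\|\ge 1}\|u\|\,\nu(du)<\infty$ via \eqref{eq:lvKh}). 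I would still need the uniform integrability near $r=0^+$; here the near-zero behaviour is controlled by $\int_{(0,1)}r^2 k_x(r)\,dr/r<\infty$ (finite and independent of $x$ by the standing assumptions in Section \ref{sec:not}), so no issue arises, and condition \eqref{MB:condk} is what guarantees the intermediate-$r$ contributions are genuinely finite rather than merely formally bounded.

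For part (ii), I would write $\varphi_t(\xi)-1$ using \eqref{rep:charac2}: setting
\begin{align*}
g_t(\xi)=i\langle\xi;\bbe X\rangle(1-e^{-t})+\int_{S^{d-1}\times(0,\infty)}\big(e^{i\langle\xi;rx\rangle}-1-i\langle rx;\xi\rangle\big)\frac{k_x(r)-k_x(e^t r)}{r}\,dr\,\lambda(dx),
\end{align*}
we have $\varphi_t(\xi)=e^{g_t(\xi)}$, so $|\varphi_t(\xi)-1|=|e^{g_t(\xi)}-1|\le |g_t(\xi)|e^{|\mathrm{Re}\,g_t(\xi)|}$; since $\mathrm{Re}\,g_t(\xi)\le 0$ (the real part of the integrand is $\cos(\langle\xi;rx\rangle)-1\le 0$), this reduces to bounding $|g_t(\xi)|$. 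Then I would split the radial integral at $r=1$: on $(0,1]$ use $|e^{i\theta}-1-i\theta|\le \theta^2/2$ to get a contribution $\le \tfrac12\|\xi\|^2\int_{(0,1]}r^2\,(k_x(r)-k_x(e^t r))\,dr/r\,\lambda(dx)\le \tfrac12\|\xi\|^2\int_{(0,1]}r^2 k_x(r)\,dr/r$, finite by the standing hypotheses; on $[1,\infty)$ use $|e^{i\theta}-1-i\theta|\le 2|\theta|$ (or $\le 2$, combined with $|\theta|$) to get $\le C\|\xi\|\int_{[1,\infty)}(k_x(r)-k_x(e^t r))\,dr\,\lambda(dx)\le C\|\xi\|\int_{[1,\infty)}k_x(r)\,dr$, finite because $\int_{\|u\|\ge 1}\|u\|\,\nu(du)<\infty$. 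Adding the drift term $\|\xi\|\|\bbe X\|(1-e^{-t})\le\|\xi\|\|\bbe X\|$ gives $|g_t(\xi)|\le C(\|\xi\|\|\bbe X\|+\|\xi\|+\|\xi\|^2)$ with $C$ independent of $t$. Finally, to extract the factor $1/t$ I would show $e^{|\mathrm{Re}\,g_t(\xi)|}\le$ const is not quite enough; instead I note that each radial integrand carries the factor $k_x(r)-k_x(e^t r)$, and by the mean value theorem / monotonicity this is $\le$ the total mass swept over a window of multiplicative width $e^t-1\le Ct$ for $t\in(0,1)$, so in fact $|g_t(\xi)|\le Ct(\|\xi\|\|\bbe X\|+\|\xi\|+\|\xi\|^2)$ after re-examining the split — more carefully, a change of variables $r\mapsto e^t r$ shows $\int_0^\infty \phi(r)(k_x(r)-k_x(e^tr))\,dr/r=\int_0^\infty(\phi(r)-\phi(e^{-t}r))k_x(r)\,dr/r$ for suitable $\phi$, and $|\phi(r)-\phi(e^{-t}r)|\le Ct\cdot(\text{local bound})$ via smoothness of $\phi(r)=e^{i\langle\xi;rx\rangle}-1-i\langle rx;\xi\rangle$. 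Then $|\varphi_t(\xi)-1|\le |g_t(\xi)|\le Ct(\cdots)$, giving \eqref{ineq:boundcharac} upon dividing by $t$.

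The main obstacle I anticipate is the $1/t$-uniformity in (ii): one must be careful that the window $(r,e^t r)$ over which $k_x$ is differenced shrinks linearly in $t$ in a way compatible with both the small-$r$ and large-$r$ tails, and that the constants produced do not blow up as $t\to 0^+$. The cleanest route is the change-of-variables identity above, transferring the $t$-dependence onto the smooth function $\phi$ rather than onto the (merely monotone, possibly singular) density $k_x$; then the finiteness of $\int_{(0,1]}r k_x(r)\,dr$ — wait, one needs $\int_{(0,1]} r^2 k_x(r)\,dr/r = \int_{(0,1]} r k_x(r)\,dr<\infty$, which holds — and of $\int_{[1,\infty)}k_x(r)\,dr<\infty$ close the estimate. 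Condition \eqref{MB:condk} is used to justify all the Fubini applications and the finiteness of the intermediate-range integrals rigorously. This mirrors \cite[Lemma, Appendix]{AH18} in the univariate case, the only new ingredient being the polar decomposition \eqref{rep:sd} and the $\lambda$-essential-supremum reading of \eqref{MB:condk}.
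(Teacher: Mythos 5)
Your part (i) follows the paper's route essentially verbatim: decompose $X_t$ into drift plus small-jump plus large-jump pieces, bound $\bbe\|X_t\|$ via a Marcus--Rosinski type estimate, and then use the pointwise domination $\nu_t\le\nu$ (from $0\le k_x(r)-k_x(e^tr)\le k_x(r)$) to get a bound uniform in $t$.

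For part (ii) you take a genuinely different route. The paper writes $\varphi_t(\xi)-1=\int_0^{\|\xi\|}\langle\nabla\varphi_t(su_\xi);u_\xi\rangle\,ds$, computes $\nabla\varphi_t$ explicitly, and then extracts the factor $t$ from the two estimates $\int_{\|u\|\ge1}\|u\|\nu_t(du)\le(e^t-1)\sup_x k_x(1^+)+(1-e^{-t})\int_{\|u\|\ge1}\|u\|\nu(du)$ and $\int_{\|u\|\le1}\|u\|^2\nu_t(du)\le(1-e^{-2t})\int_{S^{d-1}\times(0,1)}rk_x(r)\,dr\,\lambda(dx)$; the first of these is where the paper uses \eqref{MB:condk}, via $\sup_x k_x(1^+)<\infty$. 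You instead use $\varphi_t=e^{g_t}$ with $\operatorname{Re}g_t\le0$ to reduce to bounding $|g_t|$, then apply the change of variables $r\mapsto e^t r$ to rewrite $\int_0^\infty\phi(r)(k_x(r)-k_x(e^tr))\,dr/r=\int_0^\infty(\phi(r)-\phi(e^{-t}r))k_x(r)\,dr/r$, shifting the $t$-dependence from the possibly singular density difference onto the smooth function $\phi$, whose Lipschitz estimate $|\phi(r)-\phi(e^{-t}r)|\le t\,r\min(2\|\xi\|,\|\xi\|^2r)$ immediately produces the factor $t$ after splitting at $r=1$. This is slick, and notably does \emph{not} require \eqref{MB:condk} for the estimate (though the condition is still needed elsewhere in the paper, e.g.\ in Theorem~\ref{thm:smoothing}); the cost is the extra observation $\operatorname{Re}g_t\le0$. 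One small slip in the write-up: the inequality $|e^z-1|\le|z|e^{|\operatorname{Re}z|}$ you display is useless when $\operatorname{Re}z\le0$ (the exponential factor then \emph{grows}); what you actually need, and what justifies your conclusion, is $|e^z-1|\le|z|\max(1,e^{\operatorname{Re}z})=|z|$ for $\operatorname{Re}z\le0$. With that corrected, the argument is sound.
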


\begin{proof}
Let us start with the proof of $(i)$. First note that, for all $t> 0$
\begin{align*}
X_t=_d (1-e^{-t})\bbe X+ Y_t+ Z_t,
\end{align*}
where $Y_t$ and $Z_t$ are independent, with, for all $\xi\in \mathbb{R}^d$
\begin{align*}
&\bbe e^{i \langle \xi; Y_t \rangle}=\exp\left(\int_{u\in D}\left(e^{i \langle \xi; u\rangle}-1-i\langle \xi;u \rangle\right)\nu_t(du)\right)\\
&\bbe e^{i \langle \xi; Z_t \rangle}=\exp\left(\int_{u\in D^c}\left(e^{i \langle \xi; u\rangle}-1-i\langle \xi;u \rangle\right)\nu_t(du)\right),
\end{align*}
with $\nu_t$ the L\'evy measure of $X_t$. Then, for all $t> 0$,
\begin{align*}
\bbe \|X_t\| \leq (1-e^{-t})\bbe \|X\|+ \bbe \|Y_t\|+\bbe \|Z_t\|.
\end{align*}
Using \cite[Lemma 1.1]{MR01}, 
\begin{align*}
\bbe \|X_t\| \leq (1-e^{-t})\bbe \|X\|+\left(\int_{\|u\|\leq 1}\|u\|^2\nu_t(du)\right)^{1/2}+2\int_{\|u\|\geq 1}\|u\|\nu_t(du).
\end{align*}
Now, thanks to the representation \eqref{rep:charac2}, 
\begin{align*}
\int_{\|u\|\leq 1}\|u\|^2\nu_t(du)\leq \int_{\|u\|\leq 1}\|u\|^2\nu(du),\quad\quad\int_{\|u\|\geq 1}\|u\|\nu_t(du)\leq \int_{\|u\|\geq 1}\|u\|\nu(du).
\end{align*}
Thus, 
\begin{align*}
\sup_{t\geq 0}\bbe \|X_t\|\leq \bbe \|X\|+\left(\int_{\|u\|\leq 1}\|u\|^2\nu(du)\right)^{\frac{1}{2}}+2\int_{\|u\|\geq 1}\|u\|\nu(du)<+\infty.
\end{align*}
To prove (ii), first note that, for all $\xi\in \mathbb{R}^d$ with $\|\xi\|\ne 0$ and all $t>0$
\begin{align*}
\bbe e^{i \langle \xi;X_t \rangle}-1=\int_0^{\|\xi\|} \left\langle \nabla(\varphi_t)\left(s\frac{\xi}{\|\xi\|}\right);\dfrac{\xi}{\|\xi\|}\right\rangle ds,
\end{align*}
and thus
\begin{align*}
\left|\bbe e^{i \langle \xi;X_t \rangle}-1\right|\leq \|\xi\| \max_{s\in [0,\|\xi\|]}\left\|\nabla(\varphi_t)\left(s\frac{\xi}{\|\xi\|}\right)\right\|.
\end{align*}
Noting that, for all $\xi\in \mathbb{R}^d$ and all $1\leq j\leq d$,
\begin{align}
\partial_j(\varphi_t)(\xi)=\left(i\bbe X_j(1-e^{-t})+i\int_{\mathbb{R}^d}u_j \left(e^{i\langle u;\xi \rangle}-1\right)\nu_t(du)\right)\varphi_t(\xi),
\end{align}
it follows that
\begin{align*}
\left|\bbe e^{i \langle \xi;X_t \rangle}-1\right|\leq \|\xi\| (1-e^{-t})\sum_{j=1}^d\bbe |X_j|+ \sqrt{d}\|\xi\|^2 \int_{\|u\|\leq 1}\|u\|^2\nu_t(du)+2\|\xi\|\sqrt{d}\int_{\|u\|\geq 1}\|u\|\nu_t(du).
\end{align*}
Then, the polar decomposition of $\nu_t$, allows to bound the two terms $\int_{\|u\|\leq 1}\|u\|^2\nu_t(du)$ and $\int_{\|u\|\geq 1}\|u\|\nu_t(du)$. Let us first deal with the term $\int_{\|u\|\geq 1}\|u\|\nu_t(du)$. By \eqref{rep:charac2},
\begin{align*}
\int_{\|u\|\geq 1}\|u\|\nu_t(du)&=\int_{S^{d-1}\times (1,+\infty)} r\frac{k_x(r)-k_x(e^tr)}{r}dr\lambda(dx)\\
&=\int_{S^{d-1}} \left(\int_1^{e^t}k_x(r)dr+(1-e^{-t})\int_{e^t}^{+\infty}k_x(r)dr\right)\lambda(dx)\\
&\leq (e^t-1) \sup_{x\in S^{d-1}}|k_x(1^+)|+(1-e^{-t})\int_{\|u\|\geq 1}\|u\|\nu(du)
\end{align*}
which is finite in view of \eqref{MB:condk}. For the term $\int_{\|u\|\leq 1}\|u\|^2\nu_t(du)$,
\begin{align*}
\int_{\|u\|\leq 1}\|u\|^2\nu_t(du)&=\int_{S^{d-1}\times (0,1)} r(k_x(r)-k_x(e^tr))dr\lambda(dx)\\
&=\int_{S^{d-1}} \left(\int_0^1r(k_x(r)-k_x(e^tr))dr\right) \lambda(dx)\\
&=\int_{S^{d-1}} \left(-e^{-2t}\int_1^{e^t}rk_x(r)dr+(1-e^{-2t})\int_0^{1}rk_x(r)dr\right)\lambda(dx)\\
&\leq (1-e^{-2t})\int_{S^{d-1}\times (0,1)}rk_x(r)dr\lambda(dx).
\end{align*}
This concludes the proof of the lemma.
\end{proof}
\noindent
\begin{lem}\label{lem:repsmooth0}
Let $X,Y$ be two random vectors in $\mathbb{R}^d$ with respective laws $\mu_X$ and $\mu_Y$. Let $r\geq 1$. Then,
\begin{align}
d_{W_r}(\mu_X,\mu_Y)=\underset{h\in \mathcal{H}_r\cap \mathcal{C}^{\infty}_c(\mathbb{R}^d)}{\sup} \left|\bbe h(X)-\bbe h(Y)\right|.
\end{align}
\end{lem}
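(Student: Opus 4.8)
The plan is to show the two inequalities. One direction is immediate: since $\mathcal{H}_r \cap \mathcal{C}^\infty_c(\mathbb{R}^d) \subseteq \mathcal{H}_r$, taking the supremum over the smaller set gives
\[
\sup_{h\in \mathcal{H}_r\cap \mathcal{C}^{\infty}_c(\mathbb{R}^d)} \left|\bbe h(X)-\bbe h(Y)\right| \leq \sup_{h\in \mathcal{H}_r} \left|\bbe h(X)-\bbe h(Y)\right| = d_{W_r}(\mu_X,\mu_Y).
\]
The content is the reverse inequality: given an arbitrary $h \in \mathcal{H}_r$, I would approximate it by functions in $\mathcal{H}_r \cap \mathcal{C}^\infty_c(\mathbb{R}^d)$ in a way that passes to the limit in $\left|\bbe h(X) - \bbe h(Y)\right|$.

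The approximation proceeds in two steps. First, \emph{smoothing}: convolve $h$ with a mollifier. Fix $\psi \in \mathcal{C}^\infty_c(\mathbb{R}^d)$ nonnegative with $\int \psi = 1$ supported in the unit ball, set $\psi_\varepsilon(x) = \varepsilon^{-d}\psi(x/\varepsilon)$, and let $h_\varepsilon = h * \psi_\varepsilon$. Then $h_\varepsilon \in \mathcal{C}^\infty$, and because convolution with a probability density commutes with differentiation and is an averaging operation, one checks $M_\ell(h_\varepsilon) \leq M_\ell(h) \leq 1$ for $0 \leq \ell \leq r$ (indeed $D^\alpha h_\varepsilon = (D^\alpha h)*\psi_\varepsilon$ for $|\alpha|\le r$ since $h$ is $\mathcal{C}^r$, and $\|(D^\alpha h)*\psi_\varepsilon\|_\infty \le \|D^\alpha h\|_\infty$; the operator-norm bounds on the multilinear derivatives follow similarly). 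Hence $h_\varepsilon \in \mathcal{H}_r$. Moreover $h_\varepsilon \to h$ pointwise (in fact locally uniformly, by continuity of $h$) as $\varepsilon \to 0$, and $\|h_\varepsilon\|_\infty \leq \|h\|_\infty \leq 1$, so dominated convergence gives $\bbe h_\varepsilon(X) \to \bbe h(X)$ and $\bbe h_\varepsilon(Y) \to \bbe h(Y)$.

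Second, \emph{truncation to compact support}: fix $\chi \in \mathcal{C}^\infty_c(\mathbb{R}^d)$ with $\chi \equiv 1$ on the unit ball, $0 \le \chi \le 1$, $\|\chi\|_\infty \le 1$ and bounded derivatives, and set $\chi_R(x) = \chi(x/R)$, so $D^\alpha \chi_R$ carries a factor $R^{-|\alpha|}$. For a smooth $g \in \mathcal{H}_r$ let $g_R = g\,\chi_R \in \mathcal{C}^\infty_c(\mathbb{R}^d)$. By the Leibniz rule, $D^\alpha g_R = \sum_{\beta \le \alpha} \binom{\alpha}{\beta} D^\beta g \, D^{\alpha-\beta}\chi_R$; the term $\beta = \alpha$ equals $(D^\alpha g)\chi_R$ with sup-norm $\le 1$, and every other term is $O(R^{-1})$ uniformly, since $\|D^\beta g\|_\infty \le 1$ and $\|D^{\alpha-\beta}\chi_R\|_\infty = O(R^{-|\alpha-\beta|}) = O(R^{-1})$. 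Thus $\max_{0\le|\alpha|\le r}\|D^\alpha g_R\|_\infty \le 1 + C/R$ for a constant $C$ depending only on $r$, $d$, $\chi$. Rescaling, $\tilde g_R := g_R/(1+C/R) \in \mathcal{H}_r \cap \mathcal{C}^\infty_c(\mathbb{R}^d)$, and as $R \to \infty$ we have $\tilde g_R \to g$ pointwise with $\|\tilde g_R\|_\infty \le 1$, so again by dominated convergence $\bbe \tilde g_R(X) \to \bbe g(X)$ and likewise for $Y$. Combining the two approximations: given $h \in \mathcal{H}_r$ and $\delta > 0$, choose $\varepsilon$ small so that $h_\varepsilon$ is within $\delta$ of $h$ in both expectations, then $R$ large so that $(h_\varepsilon)^{\sim}_R$ is within $\delta$ of $h_\varepsilon$; this produces an element of $\mathcal{H}_r \cap \mathcal{C}^\infty_c(\mathbb{R}^d)$ whose associated functional value is within $2\delta$ of $\left|\bbe h(X) - \bbe h(Y)\right|$ up to the usual $\varepsilon/\delta$ bookkeeping, whence $d_{W_r}(\mu_X,\mu_Y) \le \sup_{h\in \mathcal{H}_r\cap \mathcal{C}^{\infty}_c(\mathbb{R}^d)} |\bbe h(X)-\bbe h(Y)|$. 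Taking $\delta \downarrow 0$ finishes the proof.

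The only mildly delicate point is the loss of normalization under truncation: multiplying by $\chi_R$ can push $\max_\ell M_\ell$ slightly above $1$, which is why the rescaling by $1/(1+C/R)$ is needed; one must verify the bound $\max_{0\le|\alpha|\le r}\|D^\alpha(g\chi_R)\|_\infty \le 1+C/R$ carefully via Leibniz, but this is a routine estimate once one notes that all derivatives of $g$ up to order $r$ are bounded by $1$ and all nontrivial derivatives of $\chi_R$ decay like $R^{-1}$. Everything else is standard mollification plus dominated convergence, using only that $h$ and its approximants are uniformly bounded (by $1$) so that the expectations against the fixed probability measures $\mu_X,\mu_Y$ converge.
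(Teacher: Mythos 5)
Your proposal is correct and follows essentially the same route as the paper: mollify, cut off with a rescaled bump, renormalize by $1+O(1/R)$ so the result lands in $\mathcal{H}_r\cap\mathcal{C}^\infty_c(\mathbb{R}^d)$, and pass to the limit (the paper uses a Gaussian mollifier and explicit tail-probability bounds rather than a compactly supported mollifier and dominated convergence, but these are cosmetic differences). One small precision you should add: membership in $\mathcal{H}_r$ is defined via the multilinear operator norms $M_\ell$, not via $\max_{|\alpha|\le r}\|D^\alpha\cdot\|_\infty$, so in the truncation step you should run the Leibniz expansion on the $\ell$-linear form $\mathbf{D}^\ell(g\chi_R)(x)(v_1,\dots,v_\ell)$ and conclude $M_\ell(g\chi_R)\le 1+C/R$ directly, exactly as you already do implicitly for the mollification step; the estimate is the same, only the statement needs adjusting.
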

\begin{proof}
Let $r\geq 1$. First, it is clear that 
\begin{align*}
d_{W_r}(\mu_X,\mu_Y)\geq \underset{h\in \mathcal{H}_r\cap \mathcal{C}^{\infty}_c(\mathbb{R}^d)}{\sup} \left|\bbe h(X)-\bbe h(Y)\right|.
\end{align*}
Now, let $h\in \mathcal{H}_r$ and let $(h_\varepsilon)_{\varepsilon>0}$ be the regularization of $h$ with the Gaussian kernel, namely, for all $x\in \mathbb{R}^d$ and all $\varepsilon>0$
\begin{align*}
h_\varepsilon(x):=\int_{\mathbb{R}^d} h(x-y)\exp\left(-\frac{\|y\|^2}{2\varepsilon^2}\right)\dfrac{dy}{(2\pi)^{\frac{d}{2}}\varepsilon^d}
\end{align*}
Note that $h_\varepsilon \in \mathcal{C}^{\infty}(\mathbb{R}^d)$, for all $\varepsilon>0$. Moreover, 
\begin{align*}
\|h-h_\varepsilon\|_\infty\leq d \varepsilon,\quad\quad M_{\ell}(h_\varepsilon)\leq 1,\quad  0\leq \ell\leq r.
\end{align*}
Next, let $\Psi$ be a compactly supported infinitely differentiable function with values in $[0,1]$ such that $\operatorname{supp}(\Psi)\subseteq D(0,2)$ and such that $\Psi(x)=1$, for all $x\in D$. Then, for any $R\geq 1$ and any $\varepsilon>0$, set, for all $x\in \mathbb{R}^d$
\begin{align}
h_{\varepsilon, R}(x):=\Psi\left(\frac{x}{R}\right)h_\varepsilon(x).
\end{align}
Then, for $X$ and $Y$ two random vectors on $\mathbb{R}^d$ with respective laws $\mu_X$ and $\mu_Y$,\begin{align*}
\left| \bbe h(X)- \bbe h(Y)\right|&\leq \left| \bbe h_{\varepsilon, R}(X) - \bbe h_{\varepsilon, R}(Y)\right|+2d\varepsilon + \int_{\mathbb{R}^d} \left(1-\Psi\left(\frac{x}{R}\right)\right)d\mu_X(x)\\
&\quad \quad+\int_{\mathbb{R}^d} \left(1-\Psi\left(\frac{x}{R}\right)\right)d\mu_Y(x),\\
&\leq \left| \bbe h_{\varepsilon, R}(X) - \bbe h_{\varepsilon, R}(Y)\right|+2d\varepsilon +\mathbb{P}\left(\|X\|\geq R\right)+\mathbb{P}\left(\|Y\|\geq R\right).
\end{align*}
Now, for $R\geq 1$ such that $\max \left(\mathbb{P}\left(\|X\|\geq R\right),\mathbb{P}\left(\|Y\|\geq R\right)\right)\leq \varepsilon$, 
\begin{align*}
\left| \bbe h(X)- \bbe h(Y)\right|\leq \left| \bbe h_{\varepsilon, R}(X) - \bbe h_{\varepsilon, R}(Y)\right|+(2d+2)\varepsilon.
\end{align*}
To continue, one needs to estimate the quantities $M_\ell (h_{\varepsilon, R})$, for all $0\leq \ell \leq r$. First, since $h\in \mathcal{H}_r$
\begin{align*}
M_0(h_{\varepsilon, R}):=\sup_{x\in \mathbb{R}^d}\left|h_{\varepsilon, R}(x)\right| \leq 1.
\end{align*}
Now, for $v\in \mathbb{R}^d$ such that $\|v\|=1$ and $x\in \mathbb{R}^d$
\begin{align*}
\mathbf{D}(h_{\varepsilon, R})(v)(x)&=\sum_{i=1}^d v_i\partial_i \left(h_{\varepsilon, R}\right)(x)\\
&=\frac{1}{R}\sum_{i=1}^d v_i h_\varepsilon(x) \partial_i(\Psi)\left(\frac{x}{R}\right)+\Psi\left(\frac{x}{R}\right)\sum_{i=1}^d v_i \partial_i \left(h_\varepsilon\right)(x)\\
&=\frac{h_\varepsilon(x)}{R}\langle \nabla \left(\Psi\right)\left(\frac{x}{R}\right);v\rangle+\Psi\left(\frac{x}{R}\right) \langle \nabla (h_\varepsilon)(x) ;v\rangle.
\end{align*}
Then, for all $R\geq 1$ and all $\varepsilon>0$
\begin{align}
M_1(h_{\varepsilon, R})\leq \frac{1}{R} \sup_{x\in \mathbb{R}^d} \|\nabla \left(\Psi\right)\left(x\right)\|+1.
\end{align}
By a similar reasoning, it follows that
$M_\ell(h_{\varepsilon, R})\leq C_{\ell,\Psi}\left(\sum_{k=1}^\ell 1/R^k\right)+1$, for all $1\leq \ell \leq r$ and for some $C_{\ell,\Psi}>0$ only depending on $\ell$ and $\Psi$. Then, the function $\widetilde{h}_{\varepsilon, R}$ defined, for all $x\in \mathbb{R}^d$, by
\begin{align*}
\widetilde{h}_{\varepsilon, R}(x):= \dfrac{h_{\varepsilon, R}(x)}{\max_{1\leq \ell \leq r}(C_{\ell,\Psi}) \left(\sum_{k=1}^r 1/R^k\right)+1},
\end{align*}
belongs to $\mathcal{H}_r \cap \mathcal{C}_c^\infty(\mathbb{R}^d)$. Thus,
\begin{align*}
\left| \bbe h(X)- \bbe h(Y)\right|&\leq \left(\max_{1\leq \ell \leq r}(C_{\ell,\Psi}) \left(\sum_{k=1}^r \frac{1}{R^k}\right)+1\right)\left| \bbe \widetilde{h}_{\varepsilon, R}(X) - \bbe \widetilde{h}_{\varepsilon, R}(Y)\right|+(2d+2)\varepsilon\\
&\leq \left(\max_{1\leq \ell \leq r}(C_{\ell,\Psi}) \left(\sum_{k=1}^r \frac{1}{R^k}\right)+1\right) \underset{h\in \mathcal{H}_r\cap \mathcal{C}^{\infty}_c(\mathbb{R}^d)}{\sup} \left|\bbe h(X)-\bbe h(Y)\right|+ (2d+2)\varepsilon.
\end{align*}
Letting first $R$ tend to $+\infty$ and then $\varepsilon$ tend to $0^+$ concludes the proof of the lemma.
\end{proof}
\noindent
The objective of Theorem \ref{thm:smoothing} below is to prove that the $d_{W_1}$ distance between the law of $X$ and the law of $X_t$ decreases exponentially fast as $t$ tends to $+\infty$. For this purpose, for any $r\geq 1$ and any random vectors $X$ and $Y$, let
\begin{align}\label{eq:wassermod}
d_{\widetilde{W}_r}(X,Y)=\underset{h\in \widetilde{\mathcal{H}}_r}{\sup}\left|\bbe h(X)-\bbe h(Y)\right|,
\end{align}
where $\widetilde{\mathcal{H}}_r$ is the set of functions which are $r$-times continuously differentiable on $\mathbb{R}^d$ such that $\|D^\alpha(f)\|_{\infty}\leq 1$, for all $\alpha\in \mathbb{N}^d$ with $0\leq |\alpha|\leq r$. Since, for any $r\geq 1$, $\mathcal{H}_r\subset \widetilde{\mathcal{H}}_r$,
\begin{align}\label{ineq:WtildeW}
d_{W_r}(X,Y)\leq d_{\widetilde{W}_r}(X,Y).
\end{align}
The next lemma shows that smooth compactly supported function in $\widetilde{H}_r$, $r\geq 1$, are enough in \eqref{eq:wassermod}.


\begin{lem}\label{lem:repsmooth}
Let $X,Y$ be two random vectors in $\mathbb{R}^d$ with respective laws $\mu_X$ and $\mu_Y$. Let $r\geq 1$. Then,
\begin{align}
d_{\tilde{W}_r}(\mu_X,\mu_Y)=\underset{h\in \widetilde{\mathcal{H}}_r\cap \mathcal{C}^{\infty}_c(\mathbb{R}^d)}{\sup} \left|\bbe h(X)-\bbe h(Y)\right|.
\end{align}
\end{lem}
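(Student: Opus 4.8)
The plan is to mirror the proof of Lemma \ref{lem:repsmooth0}, which handled the analogous statement for $\mathcal{H}_r$, since the only difference between $\mathcal{H}_r$ and $\widetilde{\mathcal{H}}_r$ is that the latter drops the boundedness/sup-norm constraint $M_0(f)\le 1$ (equivalently, it only bounds each $D^\alpha(f)$ in $\|\cdot\|_\infty$ rather than the multilinear operator norms, but on functions these coincide coordinate-wise). First I would note the trivial inequality
\begin{align*}
d_{\widetilde{W}_r}(\mu_X,\mu_Y)\geq \underset{h\in \widetilde{\mathcal{H}}_r\cap \mathcal{C}^{\infty}_c(\mathbb{R}^d)}{\sup} \left|\bbe h(X)-\bbe h(Y)\right|,
\end{align*}
which is immediate from $\widetilde{\mathcal{H}}_r\cap \mathcal{C}^{\infty}_c(\mathbb{R}^d)\subseteq \widetilde{\mathcal{H}}_r$.

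For the reverse inequality, I would fix $h\in\widetilde{\mathcal{H}}_r$ and run the same two-step approximation: first mollify with the Gaussian kernel to get $h_\varepsilon\in\mathcal{C}^\infty(\mathbb{R}^d)$ with $\|h-h_\varepsilon\|_\infty\le d\varepsilon$ and $\|D^\alpha(h_\varepsilon)\|_\infty\le\|D^\alpha(h)\|_\infty\le 1$ for all $\alpha$ with $0\le|\alpha|\le r$ (convolution with a probability density does not increase any sup-norm of a derivative, since $D^\alpha(h_\varepsilon)=(D^\alpha h)\ast \rho_\varepsilon$). Then I would truncate: with $\Psi$ a fixed $\mathcal{C}^\infty_c$ cutoff equal to $1$ on $D$ and supported in $D(0,2)$, set $h_{\varepsilon,R}(x)=\Psi(x/R)h_\varepsilon(x)$, and for $R\ge 1$ chosen so that $\mathbb{P}(\|X\|\ge R)$ and $\mathbb{P}(\|Y\|\ge R)$ are each at most $\varepsilon$, obtain
\begin{align*}
\left| \bbe h(X)- \bbe h(Y)\right|\leq \left| \bbe h_{\varepsilon, R}(X) - \bbe h_{\varepsilon, R}(Y)\right|+(2d+2)\varepsilon,
\end{align*}
exactly as in Lemma \ref{lem:repsmooth0}. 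The Leibniz-rule estimates there give $\|D^\alpha(h_{\varepsilon,R})\|_\infty\le C_{\ell,\Psi}\bigl(\sum_{k=1}^\ell 1/R^k\bigr)+1$ for $|\alpha|=\ell$, $1\le\ell\le r$ (and $\le 1$ for $\ell=0$), so rescaling by the constant $c_R:=\max_{1\le\ell\le r}(C_{\ell,\Psi})\bigl(\sum_{k=1}^r 1/R^k\bigr)+1$ yields $\widetilde{h}_{\varepsilon,R}:=h_{\varepsilon,R}/c_R\in\widetilde{\mathcal{H}}_r\cap\mathcal{C}^\infty_c(\mathbb{R}^d)$; note one must verify this using the $\widetilde{\mathcal{H}}_r$ definition (bounds on $\|D^\alpha\|_\infty$) rather than the $M_\ell$ definition, which is where the argument formally differs. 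Consequently
\begin{align*}
\left| \bbe h(X)- \bbe h(Y)\right|\leq c_R\, \underset{g\in \widetilde{\mathcal{H}}_r\cap \mathcal{C}^{\infty}_c(\mathbb{R}^d)}{\sup} \left|\bbe g(X)-\bbe g(Y)\right|+(2d+2)\varepsilon,
\end{align*}
and letting $R\to+\infty$ (so $c_R\to 1$) and then $\varepsilon\to 0^+$ gives $|\bbe h(X)-\bbe h(Y)|\le\sup_{g\in\widetilde{\mathcal{H}}_r\cap\mathcal{C}^\infty_c}|\bbe g(X)-\bbe g(Y)|$. Taking the supremum over $h\in\widetilde{\mathcal{H}}_r$ finishes the proof.

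Since this is essentially a verbatim transcription of Lemma \ref{lem:repsmooth0} with the constraint set enlarged, there is no real obstacle; the only point requiring a moment's care is checking that the mollification and the Leibniz estimates are stated in terms of $\|D^\alpha(\cdot)\|_\infty$ (which is what $\widetilde{\mathcal{H}}_r$ controls) rather than the operator-norm quantities $M_\ell$, but both arguments go through identically because for scalar functions a derivative $D^\alpha$ with $|\alpha|=\ell$ is one entry of the $\ell$-linear form $\mathbf{D}^\ell$, so bounding all entries in sup-norm and bounding the operator norm are equivalent up to dimensional constants that can be absorbed into $C_{\ell,\Psi}$. Hence the proof can simply read: "The proof is identical to that of Lemma \ref{lem:repsmooth0}, replacing $\mathcal{H}_r$ by $\widetilde{\mathcal{H}}_r$ throughout and using that the Gaussian mollification does not increase $\|D^\alpha(\cdot)\|_\infty$ for $0\le|\alpha|\le r$."
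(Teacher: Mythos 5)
Your proof is correct and follows essentially the same route as the paper's: Gaussian mollification (which does not increase $\|D^\alpha(\cdot)\|_\infty$), then a smooth compactly supported cutoff, a Leibniz estimate on the product, rescaling to land back in $\widetilde{\mathcal H}_r$, and a double limit $R\to\infty$, $\varepsilon\to 0^+$. The one place where the paper's proof of this lemma genuinely differs from that of Lemma \ref{lem:repsmooth0} is the choice of cutoff: rather than the radial cutoff $\Psi(x/R)$ you carry over, the paper uses a tensor product $\Psi_M(x)=\prod_{i=1}^d\psi(x_i/M)$, which makes $D^\beta(\Psi_M)(x)=\prod_j \psi^{(\beta_j)}(x_j/M)M^{-\beta_j}$ factor cleanly and so fits the coordinate-wise constraints $\|D^\alpha(\cdot)\|_\infty\le 1$ defining $\widetilde{\mathcal H}_r$ with no appeal to operator norms; your radial cutoff works just as well since $D^\beta\bigl(\Psi(\cdot/R)\bigr)(x)=R^{-|\beta|}(D^\beta\Psi)(x/R)$ decays at the same rate, and the constants go into $C_{\ell,\Psi}$ exactly as you say. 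One small correction to your framing: $\widetilde{\mathcal H}_r$ does \emph{not} drop the sup-norm constraint on $f$ itself — the $|\alpha|=0$ case of $\|D^\alpha(f)\|_\infty\le 1$ is precisely $\|f\|_\infty\le 1$. The actual difference is that $\mathcal H_r$ bounds the multilinear operator norms $M_\ell$ while $\widetilde{\mathcal H}_r$ only bounds the individual partials, so $\mathcal H_r\subset\widetilde{\mathcal H}_r$; this is a purely descriptive slip and does not affect your argument.
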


\begin{proof}
Let $r\geq 1$. By definition,
\begin{align}
d_{\tilde{W}_r}(\mu_X,\mu_Y)\geq \underset{h\in \widetilde{\mathcal{H}}_r\cap \mathcal{C}^{\infty}_c(\mathbb{R}^d)}{\sup} \left|\bbe h(X)-\bbe h(Y)\right|.
\end{align}
Let $h\in \widetilde{\mathcal{H}}_r$ and let $(h_\varepsilon)_{\varepsilon>0}$ be a regularization by convolution of $h$, such that $h_\varepsilon\in \mathcal{C}^\infty(\mathbb{R}^d)$ and 
\begin{align*}
\|h-h_\varepsilon\|_\infty\leq d\varepsilon,\quad\quad \|D^{\alpha}(h_{\varepsilon})\|_\infty\leq 1,\quad \alpha\in \mathbb{N}^d,\quad 0\leq |\alpha|\leq r.
\end{align*}
Let $\psi$ be a compactly supported, even, infinitely differentiable function on $\mathbb{R}$ with values in $[0,1]$ such that $\psi(x)=1$, for $x\in [-1,1]$. Then, for all $M\geq 1$, $\varepsilon>0$ and $x\in \mathbb{R}^d$ set $\Psi_M(x)=\prod_{i=1}^d \psi(x_i/M)$ and set also,
\begin{align*}
h_{M,\varepsilon}(x)=\Psi_M(x)h_\varepsilon(x).
\end{align*}
Clearly, by construction, $h_{M,\varepsilon}\in \mathcal{C}_c^\infty(\mathbb{R}^d)$. Then, for all $M\geq 1$ and $\varepsilon>0$,
\begin{align*}
|\bbe h(X)-\bbe h(Y)|\leq |\bbe h_{M,\varepsilon}(X)-\bbe h_{M,\varepsilon}(Y) |+2d\varepsilon+\int_{\mathbb{R}^d}\left|1-\Psi_M(x)\right|d\mu_X(x)+\int_{\mathbb{R}^d}\left|1-\Psi_M(y)\right|d\mu_Y(y).
\end{align*}
Choosing $M\geq 1$ large enough,
\begin{align*}
|\bbe h(X)-\bbe h(Y)|\leq |\bbe h_{M,\varepsilon}(X)-\bbe h_{M,\varepsilon}(Y) |+(2d+2)\varepsilon.
\end{align*}
Next, by the very definition of $h_{M,\varepsilon}$
\begin{align*}
\|h_{M,\varepsilon}\|_\infty\leq 1,
\end{align*}
and, moreover, by Leibniz formula, for all $\alpha \in \mathbb{N}^d$ with $1\leq |\alpha|\leq r$ and $x\in \mathbb{R}^d$
\begin{align*}
|D^\alpha(h_{M,\varepsilon})(x)|&\leq \sum_{\beta\leq \alpha} \binom{\alpha}{\beta} |D^{\beta}(\Psi_M)(x)||D^{\alpha-\beta}(h_\epsilon)(x)|\\
&\leq |D^{\alpha}(h_\epsilon)(x)|+\sum_{\beta\leq \alpha,\, \beta\ne 0} \binom{\alpha}{\beta} |D^{\beta}(\Psi_M)(x)||D^{\alpha-\beta}(h_\epsilon)(x)|\\
&\leq 1+\sum_{\beta\leq \alpha,\, \beta\ne 0} \binom{\alpha}{\beta}|D^{\beta}(\Psi_M)(x)|.
\end{align*}
Now, for all $\beta\leq \alpha$, $\beta\ne 0$ and $x\in\mathbb{R}^d$
\begin{align*}
|D^{\beta}(\Psi_M)(x)|\leq \frac{1}{M^{|\beta|}} \prod_{1\leq j\leq d} \underset{x\in \mathbb{R}}{\sup}|\psi^{(\beta_j)}(x)|.
\end{align*}
Thus,
\begin{align*}
|D^\alpha(h_{M,\varepsilon})(x)|&\leq1+C_{d,\alpha}\sum_{\beta\leq \alpha,\, \beta\ne 0} \frac{1}{M^{|\beta|}},
\end{align*}
for some $C_{d,\alpha}>0$ only depending on $d$, $\alpha$ and $\psi$. This implies that
\begin{align*}
|\bbe h(X)-\bbe h(Y)|\leq \left(1+C_{d,r}\sum_{1\leq |\alpha|\leq r}\sum_{\beta\leq \alpha,\, \beta\ne 0} \frac{1}{M^{|\beta|}}\right)  \underset{h\in \widetilde{\mathcal{H}}_r\cap C^{\infty}_c(\mathbb{R}^d)}{\sup} \left|\bbe h(X)-\bbe h(Y)\right|+(2d+2)\varepsilon
\end{align*}
for some $C_{d,r}>0$ only depending on $d$, $r$ and $\psi$. The conclusion follows by, first taking $M\rightarrow +\infty$, and then $\varepsilon\rightarrow 0^+$.
\end{proof}

\begin{thm}\label{thm:smoothing}
Let $X$ be a non-degenerate self-decomposable random vector in $\mathbb{R}^d$, without Gaussian component, with law $\mu_X$, characteristic function $\varphi$ and such that $\bbe \|X\|<\infty$. Assume further that, for any $0<a<b<+\infty$ the functions $k_x$ given by \eqref{rep:sd} satisfy the following condition
\begin{align}
\sup_{x\in S^{d-1}}\sup_{r\in (a,b)}k_x(r)<+\infty.
\end{align}
Let $X_t$, $t> 0$ be random vectors each with law $\mu_{X_t}$, with characteristic function $\varphi_t$, given, for all $\xi\in \mathbb{R}^d$ by
\begin{align}
\varphi_t(\xi)=\dfrac{\varphi(\xi)}{\varphi(e^{-t}\xi)}.
\end{align}
Then, for $t> 0$
\begin{align}
d_{W_{1}}(\mu_{X_t},\mu_X)\leq C_d e^{-\frac{t}{2^{d+1}(d+1)}},
\end{align}
for some $C_d>0$ independent of $t$.
\end{thm}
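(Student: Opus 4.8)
\noindent
The quickest argument uses self-decomposability directly: reading \eqref{eq:2.1} with $\gamma=e^{-t}$ as a factorization of characteristic functions gives the distributional identity $X\stackrel{d}{=}e^{-t}X'+X_t$ with $X'\sim\mu_X$ independent of $X_t\sim\mu_t$, so the coupling $(X_t,\,e^{-t}X'+X_t)$ yields $W_1(\mu_{X_t},\mu_X)\le e^{-t}\bbe\|X\|$ and hence $d_{W_1}(\mu_{X_t},\mu_X)\le e^{-t}\bbe\|X\|$, which already implies the claim. For completeness, and to keep the self-contained Fourier route that underlies the rest of the section (as in \cite{AH18}), the plan is as follows. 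By Lemma \ref{lem:repsmooth0} it suffices to bound $|\bbe h(X_t)-\bbe h(X)|$ for $h\in\mathcal{H}_1\cap\mathcal{C}^{\infty}_c(\mathbb{R}^d)$. Fix $\varepsilon\in(0,1]$, let $Z\sim\mathcal{N}(0,I_d)$ be independent of everything, and set $h_\varepsilon(x)=\bbe h(x+\varepsilon Z)$; since $M_1(h)\le1$ one has $\|h-h_\varepsilon\|_\infty\le\varepsilon\bbe\|Z\|\le\sqrt d\,\varepsilon$, so $|\bbe h(X_t)-\bbe h(X)|\le|\bbe h_\varepsilon(X_t)-\bbe h_\varepsilon(X)|+2\sqrt d\,\varepsilon$. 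Writing $p^\varepsilon_t$, $p^\varepsilon$ for the (smooth, integrable) Lebesgue densities of $X_t+\varepsilon Z$ and $X+\varepsilon Z$, we have $\bbe h_\varepsilon(X_t)-\bbe h_\varepsilon(X)=\int h(w)(p^\varepsilon_t(w)-p^\varepsilon(w))dw$, hence $|\bbe h_\varepsilon(X_t)-\bbe h_\varepsilon(X)|\le\|p^\varepsilon_t-p^\varepsilon\|_{L^1}$.

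\medskip\noindent
Next I would pass from $L^1$ to $L^2$: for $g\in L^1\cap L^2$ with finite first absolute moment, splitting the integral at a ball of radius $R$, using Cauchy--Schwarz inside and Markov's inequality outside, and optimizing in $R$ gives $\|g\|_{L^1}\le C_d\,(\int\|w\||g(w)|dw)^{d/(d+2)}\,\|g\|_{L^2}^{2/(d+2)}$. Applied to $g=p^\varepsilon_t-p^\varepsilon$, with $\int\|w\|(p^\varepsilon_t+p^\varepsilon)(w)dw=\bbe\|X_t+\varepsilon Z\|+\bbe\|X+\varepsilon Z\|$ bounded uniformly in $t\ge0$, $\varepsilon\le1$ by Lemma \ref{lem:MomBounds}(i), this gives $\|p^\varepsilon_t-p^\varepsilon\|_{L^1}\le C\,\|p^\varepsilon_t-p^\varepsilon\|_{L^2}^{2/(d+2)}$. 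By Plancherel, and since $\mathcal{F}(p^\varepsilon_t-p^\varepsilon)(\xi)=(\overline{\varphi_t(\xi)}-\overline{\varphi(\xi)})e^{-\varepsilon^2\|\xi\|^2/2}$,
\begin{align*}
\|p^\varepsilon_t-p^\varepsilon\|_{L^2}^2=\frac{1}{(2\pi)^d}\int_{\mathbb{R}^d}|\varphi_t(\xi)-\varphi(\xi)|^2 e^{-\varepsilon^2\|\xi\|^2}d\xi .
\end{align*}
I would then split $\mathbb{R}^d$ at $\|\xi\|=e^t$. On $\{\|\xi\|\le e^t\}$ one has $\|e^{-t}\xi\|\le1$, so $|\varphi(e^{-t}\xi)|\ge c_1:=\inf_{\|\eta\|\le1}|\varphi(\eta)|>0$ (continuity and non-vanishing of $\varphi$), and using $|1-\varphi(\eta)|\le\bbe|e^{i\langle\eta;X\rangle}-1|\le\|\eta\|\bbe\|X\|$ one gets $|\varphi_t(\xi)-\varphi(\xi)|=|\varphi(\xi)|\,|1-\varphi(e^{-t}\xi)|/|\varphi(e^{-t}\xi)|\le c_1^{-1}e^{-t}\|\xi\|\bbe\|X\|$. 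On $\{\|\xi\|>e^t\}$ use the crude bound $|\varphi_t-\varphi|\le2$ together with the Gaussian weight. This yields, with a constant depending only on $d$ and $X$, the estimate $\int|\varphi_t-\varphi|^2e^{-\varepsilon^2\|\xi\|^2}d\xi\le C(e^{-2t}\varepsilon^{-(d+2)}+\varepsilon^{-d}e^{-\varepsilon^2 e^{2t}/2})$.

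\medskip\noindent
Combining the three steps,
\begin{align*}
|\bbe h(X_t)-\bbe h(X)|\le 2\sqrt d\,\varepsilon+\frac{C\,e^{-2t/(d+2)}}{\varepsilon}+C\,\varepsilon^{-d/(d+2)}e^{-\varepsilon^2 e^{2t}/(2(d+2))}.
\end{align*}
Choosing $\varepsilon=e^{-t/(d+2)}$ makes the first two terms of order $e^{-t/(d+2)}$ while the last term is doubly exponentially small (its exponent is of order $-e^{2t(d+1)/(d+2)}$), so $|\bbe h(X_t)-\bbe h(X)|\le C_X e^{-t/(d+2)}$ for $t$ large, uniformly in $h\in\mathcal{H}_1\cap\mathcal{C}^{\infty}_c(\mathbb{R}^d)$; since the left-hand side is trivially at most $2$ for small $t$, one concludes $d_{W_1}(\mu_{X_t},\mu_X)\le C_X e^{-t/(d+2)}$ for all $t>0$, and, as $d+2\le 2^{d+1}(d+1)$, this gives the stated bound (with ``$C_d$'' absorbing the dependence on $\bbe\|X\|$ and on the lower bound $c_1$ for $|\varphi|$ on the unit ball).

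\medskip\noindent
The main obstacle is the low regularity available on both sides: the admissible test functions are only $\mathcal{C}^1$, so their Fourier transforms are not uniformly controlled, and since only a first moment of $X$ is assumed, $\varphi$ is itself merely $\mathcal{C}^1$, so $\varphi_t-\varphi$ cannot be differentiated more than once. The Gaussian mollification and the $L^1$--$L^2$ interpolation circumvent both difficulties, reducing the whole estimate to the first-order bound $|1-\varphi(\eta)|\le\|\eta\|\bbe\|X\|$ and to a uniform positive lower bound for $|\varphi|$ on a fixed ball; the only delicate point left is the choice of the cutoff radius $e^t$, which is what balances the two halves of the Fourier integral against the mollification error.
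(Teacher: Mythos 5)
Your proposal is correct, and in fact it exposes that the exponent $e^{-t/(2^{d+1}(d+1))}$ in the paper is far from optimal. Your opening observation is the decisive one: self-decomposability \eqref{eq:2.1} with $\gamma=e^{-t}$ gives the factorization $X\stackrel{d}{=}e^{-t}X'+X_t$ with $X'\sim\mu_X$ independent of $X_t\sim\mu_t$, so the coupling $(X_t,e^{-t}X'+X_t)$ of $\mu_t$ and $\mu_X$ yields $W_1(\mu_{X_t},\mu_X)\le e^{-t}\bbe\|X\|$, and then $d_{W_1}\le W_1$ from \eqref{ineq:wasser} closes the proof with the clean rate $e^{-t}$. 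This is a genuinely different and much more economical route than the paper's: the paper works purely on the Fourier side, bootstrapping $d_{\widetilde W_{r-1}}\le\tilde C_r\sqrt{d_{\widetilde W_r}}$ through $r=d+2$ levels of Gaussian mollification (to compensate for $\varphi$ being only $\mathcal C^1$ under a first--moment hypothesis), which is what produces the $2^{d+1}$ in the denominator; the coupling argument bypasses all of this by exploiting the probabilistic meaning of \eqref{eq:2.1} directly. Your alternative Fourier route is also correct and already sharper than the paper's: by mollifying once, passing to $L^2$ via Plancherel, and using the elementary interpolation $\|g\|_{L^1}\le C_d(\int\|w\||g|\,dw)^{d/(d+2)}\|g\|_{L^2}^{2/(d+2)}$ (with the uniform moment bound from Lemma~\ref{lem:MomBounds}(i) and the positive lower bound for $|\varphi|$ on the unit ball guaranteed by \cite[Lemma 7.5]{S}), you trade the paper's iterated square roots for a single power $2/(d+2)$ and land on $e^{-t/(d+2)}$. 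Both of your routes would simplify the use of Theorem~\ref{thm:smoothing} inside Propositions~\ref{prop:SteinSol1}--\ref{prop:SteinSol2}, where only integrability of $d_{W_2}(X,X_t)$ in $t$ is needed; with the coupling bound one could simply quote $d_{W_2}(X,X_t)\le e^{-t}\bbe\|X\|$ there.
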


\begin{proof}
\textit{Step 1}: Let $r\geq 2$ and let $h\in\widetilde{\mathcal{H}}_{r-1}$. Let $(h_{\varepsilon})_{\varepsilon>0}$ be a regularization by convolution of $h$ such that
\begin{align*}
\|h-h_\varepsilon\|_{\infty} \leq d \varepsilon,\ \quad \|D^{\alpha}(h_\varepsilon)\|_\infty\leq 1,\,\quad 0\leq |\alpha|\leq r-1.
\end{align*}
For $\alpha\in \mathbb{N}^d$ such that $|\alpha|=r$, let us estimate $\|D^{\alpha}(h_\varepsilon)\|_{\infty}$.
By definition, for all $x\in \mathbb{R}^d$,
\begin{align*}
h_{\varepsilon}(x)=\int_{\mathbb{R}^d}h(y)\exp\left(-\frac{\|x-y\|^2}{2\varepsilon^2}\right)\frac{dy}{(2\pi)^{\frac{d}{2}}\varepsilon^d}
\end{align*}
Now, by Rodrigues formula, for all $j\in 1,..., d$,
\begin{align*}
\partial_{x_j}^{\alpha_j}\left(\exp\left(-\frac{x_j^2}{2}\right)\right)=(-1)^{\alpha_j}H_{\alpha_j}(x_j)\exp\left(-\frac{x_j^2}{2}\right).
\end{align*}
where $H_{\alpha_j}$ is the Hermite polynomial of degree $\alpha_j$. Thus, for all $\alpha\in \mathbb{N}^d$ and $x\in \mathbb{R}^d$,
\begin{align*}
D^{\alpha}\left(\exp\left(-\frac{\| x \|^2}{2}\right)\right)=(-1)^{\alpha}H_{\alpha}(x) \exp\left(-\frac{\| x \|^2}{2}\right),
\end{align*}
where $H_\alpha(x)=\prod_{j=1}^d H_{\alpha_j}(x_j)$. Then, for all $\alpha\in \mathbb{N}^d$ such that $|\alpha|=r$, for all $x\in \mathbb{R}^d$ and for some $\beta\in \mathbb{N}^d$ such that $|\beta|=r-1$ and $\alpha-\beta\geq 0$
\begin{align*}
&D^\alpha(h_{\varepsilon})(x)=\int_{\mathbb{R}^d} D^\beta(h)(y)D^{\alpha-\beta}\left(\exp\left(-\frac{\| x-y \|^2}{2\varepsilon^2}\right)\right) \frac{dy}{(2\pi)^{\frac{d}{2}}\varepsilon^d},\\
&D^\alpha(h_{\varepsilon})(x)=\frac{(-1)}{\varepsilon}\int_{\mathbb{R}^d}D^\beta(h)(y) H_{\alpha-\beta}\left(\frac{x-y}{\varepsilon}\right)\exp\left(-\frac{\| x-y \|^2}{2\varepsilon^2}\right)\frac{dy}{(2\pi)^{\frac{d}{2}}\varepsilon^d}.
\end{align*}
Thus,
\begin{align*}
\|D^\alpha(h_{\varepsilon})\|_\infty &\leq \frac{1}{\varepsilon} \int_{\mathbb{R}^d}|H_{\alpha-\beta}(y)|\exp\left(-\frac{\|y\|^2}{2}\right)\frac{dy}{(2\pi)^{\frac{d}{2}}}\\
&\leq C_\alpha \varepsilon^{-1}\leq C_r \varepsilon^{-1},
\end{align*}
for some $C_\alpha,C_r>0$ depending only on $\alpha$, on $d$ and on $r$. Let $Z$ and $Y$ be two random vectors with respective laws $\mu_Z$ and $\mu_Y$ such that $d_{\widetilde{W}_r}(Z,Y)<1$. Then,
\begin{align*}
|\bbe h(Z)-\bbe h(Y)| \leq 2 d\varepsilon+|\bbe h_\varepsilon(Z)-\bbe h_\varepsilon(Y)|.
\end{align*}
Choosing $\varepsilon \in (0, C_r)$,
\begin{align*}
|\bbe h(Z)-\bbe h(Y)|&\leq 2 d\varepsilon+\frac{C_r}{\varepsilon} d_{\widetilde{W}_r}(Z,Y)\\
&\leq \max (2 d,C_r)\left(\varepsilon+\varepsilon^{-1}d_{\widetilde{W}_r}(Z,Y)\right).
\end{align*}
Taking $\varepsilon \leq C_r/(1+C_r)\sqrt{d_{\widetilde{W}_r}(Z,Y)}$ yields,
\begin{align*}
d_{\widetilde{W}_{r-1}}(Z,Y)\leq \tilde{C}_{r}\sqrt{d_{\widetilde{W}_r}(Z,Y)}
\end{align*}
for some $\tilde{C}_{r}>0$ only depending on $r$ and on $d$. Now, let $Z$ and $Y$ be two random vectors such that $d_{\widetilde{W}_2}(Z,Y)<1$. Then, thanks to \eqref{ineq:wasser}, $d_{\widetilde{W}_m}(Z,Y)<1$, for all $2\leq m\leq r$. By induction, we get
\begin{align}\label{ineq:W1Wr}
d_{\widetilde{W}_1}(Z,Y)\leq \overline{C}_r \left(d_{\widetilde{W}_r}(Z,Y)\right)^{\frac{1}{2^{r-1}}},
\end{align}
for some $\overline{C}_r>0$ only depending on $r$ and on $d$.\\
\\
\textit{Step 2}: Let $g$ be an infinitely differentiable function with compact support contained in the Euclidean ball centered at the origin of radius $R+1$, for some $R>0$. Then by Fourier inversion and Fubini theorem, for all $t>0$,
\begin{align*}
|\bbe g(X)-\bbe g(X_t)|&\leq e^{-t}\bbe \|X\| \frac{1}{(2\pi)^d}\int_{\mathbb{R}^d} |\mathcal{F}(g)(\xi)|\|\xi\| d\xi\\
&\leq e^{-t}\bbe \|X\| \frac{1}{(2\pi)^d}\int_{\mathbb{R}^d} |\mathcal{F}(g)(\xi)|\dfrac{(1+\|\xi\|)^{d+2}}{(1+\|\xi\|)^{d+2}}\|\xi\| d\xi\\
&\leq e^{-t}\bbe \|X\| \underset{\xi \in \mathbb{R}^d}{\sup}\left(|\mathcal{F}(g)(\xi)| (1+\|\xi\|^{d+2}) \right) \left(\frac{1}{(2\pi)^d}\int_{\mathbb{R}^d}\dfrac{\|\xi\|d\xi}{(1+\|\xi\|)^{d+2}}\right).
\end{align*}
Moreover, for all $p\geq 2$
\begin{align*}
\underset{\xi \in \mathbb{R}^d}{\sup}\bigg(|\mathcal{F}(g)(\xi)| (1+\|\xi\|^{p}) \bigg)\leq C_d (R+1)^d \left(\|g\|_\infty+\underset{1\leq j\leq d}{\max} \|\partial^p_j(g)\|_\infty\right),
\end{align*}
for some $C_d>0$ depending on the dimension $d$ only. Thus, for all $t>0$
\begin{align}\label{ineq:compactsupport}
|\bbe g(X)-\bbe g(X_t)|&\leq \tilde{C}_d e^{-t}\bbe \|X\| (R+1)^d \left(\|g\|_\infty+\underset{1\leq j\leq d}{\max} \|\partial^{d+2}_j(g)\|_\infty\right).
\end{align}
\textit{Step 3}: Let $h\in\mathcal{C}_c^\infty(\mathbb{R}^d)\cap \widetilde{\mathcal{H}}_{d+2}$. Let $\Psi_R$ be a compactly supported infinitely differentiable function on $\mathbb{R}^d$ whose support is contained in the Euclidean ball centered at the origin of radius $R+1$ with values in [0,1] and such that $\Psi_R(x)=1$, for all $x$ such that $\|x\|\leq R$. Then, for all $t>0$
\begin{align*}
|\bbe h(X)-\bbe h(X_t)|\leq &|\bbe h(X)\psi_R(X) -\bbe h(X_t)\Psi_R(X_t)|+|\bbe h(X)(1-\Psi_R(X))|\\
&\quad\quad+|\bbe h(X_t)(1-\Psi_R(X_t))|.
\end{align*}
Now, note that
\begin{align*}
|\bbe h(X_t)(1-\Psi_R(X_t))|&\leq \int_{\mathbb{R}^d}(1-\Psi_R(x))d\mu_t(x)\\
&\leq \mathbb{P}\left(\|X_t\|\geq R\right)\\
&\leq \dfrac{\bbe \|X_t\|}{R}\\
&\leq \dfrac{1}{R}\sup_{t>0}\bbe \|X_t\|,
\end{align*}
using Lemma \ref{lem:MomBounds}. A similar bound holds true for $|\bbe h(X)(1-\Psi_R(X))|$. Moreover, from \eqref{ineq:compactsupport}, 
\begin{align*}\label{ineq:inter}
|\bbe h(X)-\bbe h(X_t)|\leq \frac{C_d}{R}+\tilde{C}_d e^{-t}\bbe \|X\| (R+1)^d \left(\|h\Psi_R\|_\infty+\underset{1\leq j\leq d}{\max}\|\partial^{d+2}_j(h\Psi_R)\|_\infty\right),
\end{align*}
for some constant $C_d$ depending on $d$. Now, 
\begin{align*}
\|h\Psi_R\|_\infty\leq 1,
\end{align*}
and, by taking for $\Psi_R$ an appropriate tensorization of one dimensional bump functions $\psi_R$, 
\begin{align*}
\underset{1\leq j\leq d}{\max}\|\partial^{d+2}_j(h\Psi_R)\|_\infty\leq D,
\end{align*}
for some $D>0$ independent of $R$ and $h$. Then,
\begin{align*}
|\bbe h(X)-\bbe h(X_t)|\leq C_d\left( \frac{1}{R}+ (R+1)^d e^{-t}\bbe \|X\|\right).
\end{align*}
Choosing $R=e^{t/(d+1)}$, for all $t>0$, it follows that
\begin{align*}
d_{\widetilde{W}_{d+2}}(X,X_t)\leq \tilde{C}_d e^{-\frac{t}{d+1}},
\end{align*}
for some $\tilde{C}_d>0$. Using \eqref{ineq:W1Wr} with $r=d+2$, 
\begin{align*}
d_{\widetilde{W}_1}(X,X_t)\leq \overline{C}_d \left(d_{\widetilde{W}_{d+2}}(X,X_t)\right)^{\frac{1}{2^{d+1}}}\leq C_d e^{-\frac{t}{2^{d+1}(d+1)}}.
\end{align*}
Using the inequality \eqref{ineq:WtildeW} concludes the proof of the theorem.
\end{proof}

\end{document}